
\documentclass[a4paper,openbib,12pt]{article}
\usepackage{geometry}
\usepackage{amsmath}
\usepackage{amssymb}
\usepackage{amsfonts}

\setcounter{MaxMatrixCols}{10}

\geometry{left=2.5cm,right=2.5cm,top=2.5cm,bottom=2.5cm}
\newtheorem{theorem}{Theorem}[section]

\newtheorem{corollary}[theorem]{Corollary}

\newtheorem{lemma}[theorem]{Lemma}

\newtheorem{remark}[theorem]{Remark}

\newenvironment{proof}[1][Proof]{\noindent\textbf{#1.} }{\ \rule{0.5em}{0.5em}}
\input{tcilatex}
\begin{document}

\title{Topological symmetries of simply-connected four-manifolds and actions
of automorphism groups of free groups}
\author{Shengkui Ye}
\maketitle

\begin{abstract}
Let $M$ be a simply connected closed $4$-manifold. It is proved that any
(possibly finite) compact Lie group acting effectively and homologically
trivially on $M$ by homeomorphisms is an abelian group of rank at most two,
when \ $b_{2}(M)>2$. As applications, let $\mathrm{Aut}(F_{n})$ be the
automorphism group of the free group of rank $n.$ We prove that any group
action of $\mathrm{Aut}(F_{n})$ (and thus $\mathrm{GL}_{n}(\mathbb{Z})$) $%
(n\geq 4)$ on $M\neq S^{4}$ by homologically trivial homeomorphisms factors
through $\mathbb{Z}/2.$
\end{abstract}

\section{Introduction}

Let $F_{n}$ be a free group of rank $n$ and $\mathrm{Aut}(F_{n})$ the
automorphism group, $\mathrm{SAut}(F_{n})$ its unique index-two subgroup. It
is generally believed that any group action of $\mathrm{Aut}(F_{n})$ on a
compact manifold $N^{k}$ factors a finite group when $k<n-1.$ This is a
general version of the Zimmer program, studying actions of irreducible
high-rank lattices on manifolds (cf. \cite{fi}). Bridson and Vogtmann \cite%
{bv} prove that any action of $\mathrm{SAut}(F_{n})$ on a sphere $S^{k}$ $%
(k<n-1)$ is trivial. Let $M$ be an orientable manifold of Euler
characteristic not divisible by $6.$ The author \cite{ye182} proves that any
action of $\mathrm{SAut}(F_{n})$ on $M^{k}$ $(k<n-1)$ is trivial. In this
article, we study actions of $\mathrm{SAut}(F_{n})$ on simply connected
4-manifolds. Note that the abelianization $F_{n}\rightarrow \mathbb{Z}^{n}$
induces epimorphisms $\mathrm{Aut}(F_{n})\rightarrow \mathrm{GL}_{n}(\mathbb{%
Z})$ and $\mathrm{SAut}(F_{n})\rightarrow \mathrm{SL}_{n}(\mathbb{Z}).$ We
prove the following.

\begin{theorem}
\label{th2}Let $\mathrm{SAut}(F_{n})$ be the unique index-two subgroup of
the automorphism group of the free group $F_{n}$ and $M$ a closed 4-manifold
with $H_{1}(M;\mathbb{Z})=0$. When $b_{2}(M)\geq 1,$ any group action of $%
\mathrm{SAut}(F_{n})$ (and thus $\mathrm{SL}_{n}(\mathbb{Z})$) $(n\geq 4)$
on $M$ by homologically trivial homeomorphisms is trivial$.$
\end{theorem}

\begin{remark}
\begin{enumerate}
\item[(i)] Combining Theorem \ref{th2} with a result of Bridson-Vogtmann 
\cite{bv}, it is true that any group action of $\mathrm{SAut}(F_{n})$ $%
(n\geq 6)$ on any simply connected 4-manifold $M$ by homologically trivial
homeomorphisms is trivial$.$ Without restrictions of homological triviality,
the group $\mathrm{SAut}(F_{n})$ could act non-trivially on some $M$ through
a finite quotient group.

\item[(ii)] When $n=3,$ the group $\mathrm{SAut}(F_{3})$ could act through $%
\mathrm{SL}_{3}(\mathbb{Z})$ on $\mathbb{C}P^{2}$ and $S^{2}$ (thus $%
S^{2}\times S^{2}$) non-trivially. This means that the inequality $n\geq 4$
in Theorem \ref{th2} cannot be improved. It is interesting to notice that
the critical size $n=4$ and the dimension of $M$ (excluding $S^{4}$) are the
same, while it is generally required that $n>\dim M+1$ in Zimmer's program.
\end{enumerate}
\end{remark}

The proof of Theorem \ref{th2} is based on the study of symmetries of
4-manifolds. Let $M$ be a simply connected closed topological 4-manifold.
Such a manifold was classified by Freedman in terms of the intersection
pairing on second homology and the Kirby--Siebenmann invariant. As a first
step, we study the topological transformation groups on $M.$ Since any
finite group $G$ acts freely on a simply connected closed 4-manifold $M$
(the universal cover of a 4-manifold with fundamental group $G$) and acts
effectively on the homology $H_{\ast }(M;\mathbb{Z})$ by the Lefschetz
fixed-point theorem, we restrict our attention to the homologically trivial
actions. It turns out that these actions are very restrictive. The locally
linear version of the following result was firstly proved by McCooey \cite%
{mc}.

\begin{theorem}
\label{main}Let $G$ be a (possibly finite) compact Lie group acting
effectively and homologically trivially on a topological closed 4-manifold $%
M $ with $H_{1}(M;\mathbb{Z})=0.$ When the second Betti number $b_{2}(M)\geq
3, $ the group $G$ is a subgroup of $S^{1}\times S^{1}.$
\end{theorem}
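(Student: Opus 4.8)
The plan is to reduce the statement to three separate bounds on the compact Lie group $G$ and to establish each by equivariant cohomology and Smith theory, so that no local-linearity hypothesis is needed anywhere. First I record an elementary structural reduction: a compact Lie group embeds as a closed subgroup of $S^1\times S^1$ if and only if it is abelian, has dimension at most two, and has $p$-rank at most two for every prime $p$. Indeed an abelian compact Lie group has the form $T^{k}\times F$ with $F$ finite abelian, and this embeds in $T^{2}$ exactly when, for every $p$, the quantity $k$ plus the number of invariant factors of $F$ divisible by $p$ is at most $2$. Thus it suffices to prove: (i) the $p$-rank of $G$ is at most $2$ for every prime $p$; (ii) the identity component $G_{0}$ is a torus of dimension at most $2$; and (iii) $G$ is abelian.

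The basic input throughout is the Lefschetz fixed-point formula combined with \emph{homological triviality}. Since $M$ is simply connected with $H_{1}=0$, we have $\chi(M)=2+b_{2}(M)\ge 5$; because $G$ acts homologically trivially, every finite-order element $g\in G$ has Lefschetz number $L(g)=\chi(M)$, so $\chi(M^{g})=\chi(M)=2+b_{2}(M)$, and in particular every finite-order element has nonempty fixed set. More globally I work in the Borel construction $M_{G}=M\times_{G}EG$: homological triviality makes the local system in the Leray--Serre spectral sequence of $M\to M_{G}\to BG$ trivial, which is precisely what lets the localization machinery run in the purely topological setting. Homological triviality also forces the action to be orientation preserving, and I first establish that the fixed set $M^{H}$ of each nontrivial $p$-subgroup $H$ is a cohomology manifold of even codimension, hence of dimension in $\{0,2\}$ by effectiveness, with $\chi(M^{H})=2+b_{2}(M)$. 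Feeding this into Borel's dimension formula $4-\dim M^{(\mathbb{Z}/p)^{k}}=\sum_{H}\bigl(\dim M^{H}-\dim M^{(\mathbb{Z}/p)^{k}}\bigr)$ over the corank-one subgroups, together with the Euler-characteristic constraints, yields an incompatibility once $k\ge 3$, which is (i).

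For (ii) the content is to exclude positive-dimensional non-abelian behaviour, since a connected group is automatically homologically trivial. A compact connected non-abelian Lie group contains a copy of $SU(2)$ or $SO(3)$, so it is enough to rule these out; a connected compact group with no such subgroup is abelian, hence a torus. I would exploit the central involution of $SU(2)$ (respectively a rotation by $\pi$ in $SO(3)$): its fixed set $N$ is an even-dimensional cohomology manifold with $\chi(N)=2+b_{2}(M)$, and the quotient group acts on $N$. The classification of compact Lie group actions on cohomology surfaces then forces the two-dimensional components of $N$ to be $2$-spheres acted on standardly, and counting these components against $\chi(N)=2+b_{2}(M)$, together with localization for a maximal torus, pins down $b_{2}(M)\le 2$, contradicting the hypothesis. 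Here is exactly where $b_{2}(M)>2$ is used, eliminating the exceptional high-symmetry manifolds $S^{4},\mathbb{CP}^{2},S^{2}\times S^{2}$. The same fixed-point and weight analysis applied to a hypothetical $T^{3}\subseteq G_{0}$ bounds $\dim G_{0}\le 2$, so $G_{0}$ is a torus of rank at most $2$.

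Finally (iii): suppose $G$ is non-abelian. Passing to a finite subgroup, $G$ contains either a non-abelian $p$-group or a non-abelian group $\mathbb{Z}/q\rtimes\mathbb{Z}/p$ with $p\mid q-1$; in either case there is a normal cyclic subgroup $A=\langle a\rangle$ and an element $b$ with $bab^{-1}=a^{r}$, $r\not\equiv 1$, so that $b$ acts on $M^{A}$ while twisting the normal data of $A$ by $r$. I would run this through the $H^{*}(B\langle a\rangle)$-module structure of the equivariant cohomology of $M^{A}$, which replaces the rotation-number argument available only in the locally linear case, to contradict the fixed-point and Euler-characteristic constraints forced by homological triviality (the identity $\chi(M^{A})=2+b_{2}(M)$ together with the restrictions on how $b$ may permute and twist the components of $M^{A}$). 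I expect this last step — controlling the induced action of $b$ on the fixed set of the normal subgroup $A$ purely cohomologically — to be the main obstacle, precisely because in the topological category there are no tangential representations or slice theorem, so everything must be carried out with equivariant cohomology and Smith theory; the small primes $p=2,3$, where $SO(3)$-symmetry and the smallest non-abelian configurations live, will require separate attention. Assembling (i), (ii), and (iii) with the structural reduction then gives $G\subseteq S^{1}\times S^{1}$.
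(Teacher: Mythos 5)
Your skeleton is, at the top level, the same as the paper's: reduce to finite subgroups, bound the $p$-rank of elementary abelian subgroups via Borel's formula, Euler characteristics and localization (this is essentially the paper's Lemma \ref{elem}), prove that every finite subgroup is abelian, and assemble using the structure theory of compact abelian Lie groups. The decisive step (iii), however, has a genuine gap, in two respects. First, the group-theoretic reduction you invoke is false. It is not true that a non-abelian finite group must contain either a non-abelian $p$-group or a group $\mathbb{Z}/q\rtimes\mathbb{Z}/p$, with in either case a \emph{normal cyclic} subgroup $A=\langle a\rangle$ and an element $b$ satisfying $bab^{-1}=a^{r}$, $r\not\equiv 1$. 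The alternating group $A_{4}\cong(\mathbb{Z}/2)^{2}\rtimes\mathbb{Z}/3$ is a counterexample: it is minimal non-abelian, is not a $p$-group, has no subgroup of order $6$, and its only normal proper subgroups are $(\mathbb{Z}/2)^{2}$ and the trivial group, so no twisted normal cyclic subgroup exists. Similarly, the extraspecial group $(\mathbb{Z}/p)^{2}\rtimes\mathbb{Z}/p$ of exponent $p$ is a non-abelian $p$-group in which every normal cyclic subgroup is central (a normal subgroup of order $p$ in a $p$-group is central, and there are no elements of order $p^{2}$), so your mechanism does not exist there either. These are exactly the minimal non-abelian groups with a \emph{normal rank-two} subgroup, and they are why the paper needs a separate argument (Theorem \ref{main4}): there the contradiction comes not from twisting a normal cyclic subgroup but from Lemma \ref{mac}, which shows the singular set of the normal $(\mathbb{Z}/p)^{2}$ is a single closed loop of $2$-spheres whose classes generate $H_{2}(M;\mathbb{Z})$, so that an outer element acting homologically trivially must preserve each sphere, contradicting $g\,\mathrm{Fix}(a)=\mathrm{Fix}(gag^{-1})\nsubseteq\mathrm{Fix}(a)$.

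Second, even in the cases where your mechanism does exist ($\mathbb{Z}/p\rtimes\mathbb{Z}/q^{n}$, the quaternion group), what you offer is a plan rather than a proof: you explicitly defer \textquotedblleft controlling the induced action of $b$ on the fixed set of $A$ purely cohomologically,\textquotedblright\ and that deferred step is where essentially all of the paper's work lies (Theorems \ref{main1}, \ref{main2}, \ref{main3}). Concretely, the paper computes $H^{1}(\Sigma;\mathbb{Z})$ and $H^{2}(\Sigma;\mathbb{Z})$ of the singular set as modules over the group --- sums of trivial modules $\mathbb{Z}$, regular representations $\mathbb{Z}[\mathbb{Z}/p]$, and cyclotomic modules $\mathbb{Z}[\xi_{p}]$, organized by a graph of intersecting fixed spheres --- then feeds these into the Borel spectral sequence for $\Sigma_{G}$, proves collapse of its $p$-primary part, and compares $H^{i}(M_{G})$ with $H^{i}(\Sigma_{G})$ for $i>4$ to extract an Euler-characteristic count forcing $b_{2}(M)\le 2$. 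Nothing in your sketch indicates how this count would come out, and there is no softer argument visible: the whole difficulty of the topological (non-locally-linear) category is concentrated here. So the proposal cannot be completed as written; it needs both the missing rank-two case and the actual spectral-sequence computation.
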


\begin{remark}

\begin{enumerate}
\item[(i)] The dihedral group $D_{n}$ could act effectively and
homologically trivially on $S^{2}$ and thus on $S^{2}\times S^{2}.$ This
means that the bound of Betti numbers in Theorem \ref{main} is sharp.

\item[(ii)] Let the cyclic group $\mathbb{Z}/p$ of order prime $p$ act on $%
S^{2}$ by rotations. The product $\mathbb{Z}/p\times \mathbb{Z}/p$ acts on $%
S^{2}\times S^{2}$ by rotating each factor with fixed points $S^{0}\times
S^{0}$. Taking connected sum along an invariant ball around a fixed point,
the product $\mathbb{Z}/p\times \mathbb{Z}/p$ could act homologically
trivially on the connected sum $(S^{2}\times S^{2})\#(S^{2}\times S^{2}).$
Therefore, the bound of ranks of the Lie group $G$ in Theorem \ref{main} is
sharp.
\end{enumerate}
\end{remark}

The actions of finite groups on 4-manifolds are already studied by many
people, eg. Edmonds \cite{ad,ad2}, Hambleton and Lee \cite{hl,hl2}, McCooey 
\cite{mc,mc2} and Wilczy\'{n}ski \cite{w}, among others. For most of these
works, the group actions are assumed to be locally linear (or smooth). The
actions considered in this article are topological. Without locally linear
assumptions, we cannot use the real representation of isotropy subgroups of
a fixed point (see the first paragraph of Section \ref{sect} for an explicit
list of difficulties). We will use the Smith theory of homology manifolds,
equivariant cohomology, localization method and Borel formulas to deal with
this difficulty.

The paper is organized as the following. In Section 2, we collect some basic
facts and lemmas, most of which are already known in the literature. In
Section 3, we study the group action of minimal non-abelian finite groups on
simply connected 4-manifolds and Theorem \ref{main} is proved. In the last
section, we prove Theorem \ref{th2}.

\section{Preliminary}

Let $L=\mathbb{Z}$ or $F_{p},$ the finite field with prime $p$ elements. All
homology groups are Borel-Moore homology groups with compact supports and
coefficients in a sheaf $\mathcal{A}$ of modules over $L$. The homology
groups of $X$ are denoted by $H_{\ast }(X;\mathcal{A})$ and the cohomology
groups (with coefficients in $\mathcal{A}$ and compact supports) are denoted
by $H^{\ast }(X;\mathcal{A}).$ If $\mathcal{A}$ is the constant sheaf, this
is isomorphic to the \v{C}ech cohomology with compact supports. If $F$ is a
closed subset of $X$, then sheaf cohomology satisfies $H^{k}(X,F;\mathcal{A}%
)\cong H^{k}(X-F;\mathcal{A}).$ The (co)homology $n$-manifold over $L$
considered in this article will be as in Borel \cite{Bo}. Roughly speaking,
a homology $n$-manifold over $L$ (denoted by $n$-hm$_{L}$) is a locally
compact Hausdorff space that has a local cohomology structure (with
coefficient group $L$) resembling that of the Euclidean $n$-space.
Topological manifolds are (co)homology manifolds over $L.$ Homology
manifolds satisfy Poincar\'{e} duality between Borel-Moore homology and
sheaf cohomology (\cite{br}, Thm 9.2, p.329)$.$

We need several lemmas. The following result is generally called the Local
Smith Theorem (cf. \cite{br} Theorem 20.1, Prop 20.2, pp. 409-410).

\begin{lemma}
\label{sm}Let $p$ be a prime and $L=F_{p}$. The fixed point set of any
action of the cyclic group $\mathbb{Z}/p$ of order $p$ on an $n$-hm$_{L}$ is
the disjoint union of (open and closed) components each of which is an $r$-hm%
$_{L}$ with $r\leq n$. If $p$ is odd then each component of the fixed point
set has even codimension.
\end{lemma}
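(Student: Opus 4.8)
The plan is to reduce the assertion to a local computation at each fixed point and then invoke the classical Smith theorem for mod-$p$ cohomology spheres. Write $F=X^{\mathbb{Z}/p}$ for the fixed set and fix $x\in F$. Since $X$ is an $n$-hm$_{L}$, its local cohomology at $x$ is that of $\mathbb{R}^{n}$: the local homology $H_{k}(X,X\setminus\{x\};L)$ equals $L$ for $k=n$ and vanishes otherwise, so the ``local boundary'' of a small neighborhood of $x$ behaves like a mod-$p$ cohomology $(n-1)$-sphere. Being an $r$-hm$_{L}$ is itself a local condition, so it suffices to show that at each $x\in F$ the local homology of $F$ is concentrated in a single degree $r$ with value $L$ and with $r\leq n$; the codimension is then read off at the same point.

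Next I would exploit that the $\mathbb{Z}/p$-action fixes $x$ and hence acts on the local cohomology $(n-1)$-sphere attached to $x$, the fixed set of this induced action being precisely the local cohomology structure that $F$ inherits at $x$. Applying Smith's theorem for cohomology spheres to this local sphere gives that its fixed set is a mod-$p$ cohomology $(r-1)$-sphere for some $-1\leq r-1\leq n-1$; translating back, the component of $x$ in $F$ is an $r$-hm$_{L}$ with $r\leq n$. The fixed set $F$ is closed in $X$, and since the local dimension $r$ is locally constant, the subsets of $F$ on which $r$ takes a fixed value are open and closed, giving the stated decomposition into components that are homology manifolds over $L$.

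For the parity statement when $p$ is odd I would argue through the normal cohomology sphere. Because $\mathbb{Z}/p$ has no nontrivial proper subgroups, every point outside $F$ has trivial stabilizer, so the action is free on the complement of $F$; cohomologically this makes the normal $(n-r-1)$-sphere at $x$ into a mod-$p$ cohomology sphere carrying a \emph{free} $\mathbb{Z}/p$-action. By Smith theory a free $\mathbb{Z}/p$-action on a mod-$p$ cohomology sphere can exist only in odd dimension when $p$ is odd (the quotient has the mod-$p$ cohomology of a lens-type space, which is periodic). Hence $n-r-1$ is odd and $n-r$ is even. For $p=2$ an antipodal-type free action exists in every dimension, so no parity constraint appears, matching the statement.

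The main obstacle I anticipate is making the ``local cohomology sphere'' and its equivariant Smith sequence precise in the sheaf-theoretic Borel--Moore category, where a homology manifold need not possess an honest link. This requires setting up the Smith--Gysin sequences at the level of the local cohomology sheaves (equivalently, of the $L[\mathbb{Z}/p]$-module of chains, using the operators $t-1$ and $1+t+\cdots+t^{p-1}$) and checking that passage to the fixed set commutes with the relevant localization. Verifying that $\mathbb{Z}/p$ acts trivially on the top local homology $H_{n}(X,X\setminus\{x\};L)\cong L$ --- automatic since $\mathrm{Aut}(L)=(\mathbb{Z}/p)^{\times}$ has order coprime to $p$ --- is the ingredient that keeps the duality and the parity bookkeeping consistent.
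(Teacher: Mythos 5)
Your reduction to a local statement is the right instinct, but the pivotal step rests on an object that does not exist. You propose to let $\mathbb{Z}/p$ act on ``the local cohomology $(n-1)$-sphere attached to $x$'' and then apply the classical Smith theorem for cohomology spheres. For a topological action on an $n$-hm$_{L}$ (indeed even on a genuine topological manifold, absent local linearity) there is no invariant link: the local homology $H_{k}(X,X\setminus \{x\};L)$ is merely a module, not a space, and no small invariant neighborhood with sphere-like boundary is available. Smith's theorem applies to an honest $\mathbb{Z}/p$-space, so in your second paragraph there is nothing to apply it to; likewise your assertion that ``the fixed set of this induced action is precisely the local cohomology structure that $F$ inherits at $x$'' is not a translation step but is itself the entire content of the lemma. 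Your closing paragraph concedes this obstacle, but flagging it does not fill it: carrying out Smith theory at the level of local sheaf cohomology, with the operators $t-1$ and $1+t+\cdots +t^{p-1}$ and the attendant continuity and duality arguments, is exactly the multi-page proof of Theorem 20.1 and Proposition 20.2 in Chapter V of Bredon's \emph{Sheaf Theory} --- which is what the paper cites in place of a proof, precisely because no shortcut through a ``local sphere'' is known.

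The parity claim for odd $p$ inherits the same defect: the ``normal $(n-r-1)$-sphere at $x$ carrying a free $\mathbb{Z}/p$-action'' is again a fictional space. The fact you want to invoke --- that a free $\mathbb{Z}/p$-action on a mod-$p$ cohomology sphere forces odd dimension when $p$ is odd --- is true, but it needs an actual space to act on. This gap is not a technicality in the context of this paper: the introduction stresses that the whole difficulty with topological (non-locally-linear) actions is the absence of isotropy representations and invariant spheres at fixed points, and the paper's other local tools (Borel's formula, Lemma \ref{Bredon}) are invoked exactly because such local linear models are unavailable. A correct self-contained proof would have to reproduce the sheaf-theoretic Smith sequence argument; as written, your proposal is a plausibility sketch whose two key steps presuppose the structure they are meant to establish.
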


The following lemma is from Bredon \cite{br}, Theorem 2.5, p.79.

\begin{lemma}
\label{Bredon}Let $G$ be a group of order $2$ operating effectively on an $n$%
-cm over $\mathbb{Z}$, with non-empty fixed points. Let $F_{0}$ be a
connected component of the fixed point set of $G$, and $r=\dim _{2}F_{0}$.
Then $n-r$ is even (respectively odd) if and only if $G$ preserves
(respectively reverses) the local orientation around some point of $F_{0}.$
\end{lemma}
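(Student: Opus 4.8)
The plan is to reduce the statement to a purely local computation of the action of the generator $\tau$ of $G$ on the top local cohomology of $X$ at a fixed point, and then to the classical Smith theory of involutions on cohomology spheres. First I would fix a point $x\in F_{0}$ and record that, since $X$ is a cohomology $n$-manifold over $\mathbb{Z}$, the local cohomology $H^{q}_{\{x\}}(X;\mathbb{Z})$ is $\mathbb{Z}$ for $q=n$ and vanishes otherwise; the generator $\tau$ acts on $H^{n}_{\{x\}}(X;\mathbb{Z})\cong\mathbb{Z}$ by a sign $\epsilon(x)=\pm1$, and this sign is exactly the obstruction to $G$ preserving the local orientation at $x$. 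Because the local orientation system is a locally constant sheaf on $X$ on which $\tau$ acts compatibly, $\epsilon(x)$ is locally constant on the fixed set; as $F_{0}$ is connected it is constant, so it suffices to prove $\epsilon=(-1)^{n-r}$ at a single point. The heart of the argument is therefore the claim that $\tau$ acts on $H^{n}_{\{x\}}(X;\mathbb{Z})$ by $(-1)^{n-r}$ with $r=\dim_{2}F_{0}$.

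To establish this claim I would pass to the local link of $x$. For a cohomology $n$-manifold over $\mathbb{Z}$, a small punctured neighbourhood $U\setminus\{x\}$ is an integral cohomology $(n-1)$-sphere, and the involution restricts to it with fixed set $(F_{0}\cap U)\setminus\{x\}$. By Lemma \ref{sm} this fixed set is a mod-$2$ cohomology manifold of dimension $r-1$, i.e. a mod-$2$ cohomology $(r-1)$-sphere inside the integral cohomology $(n-1)$-sphere $U\setminus\{x\}$ (with the convention that $r=0$ corresponds to an empty fixed link). The claim then becomes the statement that an involution on a cohomology $m$-sphere over $\mathbb{Z}$ whose fixed set is a mod-$2$ cohomology $s$-sphere acts on the top group $H^{m}(\,\cdot\,;\mathbb{Z})\cong\mathbb{Z}$ by $(-1)^{m-s}$; applying it with $m=n-1$ and $s=r-1$ gives $\epsilon=(-1)^{(n-1)-(r-1)}=(-1)^{n-r}$, as desired. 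This is the cohomological shadow of a linear involution on $\mathbb{R}^{n}=\mathbb{R}^{r}\times\mathbb{R}^{n-r}$, whose determinant is $(-1)^{n-r}$.

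The sphere statement I would prove with the integral Smith--Gysin sequence of the involution, which relates the cohomology of $\Sigma^{m}$, of the orbit space, and of the fixed sphere $\Sigma^{s}$; tracking generators through this sequence pins down the action of $\tau$ on $H^{m}(\Sigma^{m};\mathbb{Z})$ in terms of the parity of $m-s$ (equivalently, via a Lefschetz-number computation, since the known mod-$2$ cohomology of $\Sigma^{s}$ fixes the Lefschetz number and hence the degree $\pm1$). The main obstacle is that we work in the topological category with no slice theorem or local linearity available, so every step---the identification of the local link as a cohomology sphere, the restriction of the action to it, and the dimension of the fixed link---must be carried out entirely within Borel--Moore and sheaf cohomology using Borel's framework for cohomology manifolds. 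Special care is needed in the degenerate case $r=0$ and in keeping the distinction between integral coefficients (which carry the orientation sign) and $\mathbb{Z}/2$ coefficients (in which Smith theory is cleanest) under control throughout.
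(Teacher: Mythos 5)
First, a point of context: the paper does not prove this lemma at all --- it is quoted from Borel's seminar (\cite{Bo}, Theorem 2.5, p.~79) --- so your proposal is competing with that classical source rather than with an argument in the paper. Your overall strategy is the natural one and starts correctly: the sign $\epsilon(x)$ of the action of $\tau$ on $H^{n}_{\{x\}}(X;\mathbb{Z})\cong\mathbb{Z}$ is precisely the local orientation behaviour, the orientation sheaf of an $n$-cm over $\mathbb{Z}$ is locally constant, so $\epsilon$ is constant on the connected component $F_{0}$, and the sphere-level statement you reduce to (an involution of a cohomology $m$-sphere whose fixed set is a mod-$2$ cohomology $s$-sphere has degree $(-1)^{m-s}$) is a true classical theorem.

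The genuine gap is the sentence ``a small punctured neighbourhood $U\setminus\{x\}$ is an integral cohomology $(n-1)$-sphere.'' That is true when $X$ is a topological manifold, but the lemma concerns an arbitrary cohomology $n$-manifold over $\mathbb{Z}$, and in that generality a point need not have \emph{any} neighbourhood whose punctured version has the cohomology of $S^{n-1}$. The manifold condition is a statement about the local cohomology $H^{*}_{\{x\}}(X;\mathbb{Z})$, equivalently about the direct limit of $H^{*}(U\setminus\{x\})$ over all neighbourhoods $U$ of $x$; the clc condition makes the images in this direct system stabilize, but no single $U$ need realize the limit. Consequently there is no actual space --- much less a compact one --- to which your Smith--Gysin sequence or Lefschetz computation can be applied: the Lefschetz route in particular needs a compact space with finite-dimensional cohomology, plus an identification of the rational Euler characteristic of the fixed set with the mod-$2$ one, none of which is available for a punctured neighbourhood or for the pro-object that replaces the link. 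So the step carrying all the content of the proof is exactly the step that is missing; repairing it means redoing Smith theory at the level of local cohomology and cohomology with supports (which is what the cited source does), not quoting the sphere theorem. You do flag this obstacle in your closing paragraph, but flagging it does not discharge it: as written, your argument establishes the lemma only for topological manifolds --- which, to be fair, is the only case the paper actually uses, since Lemma \ref{first} applies it to a topological 4-manifold.
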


The following lemma is from Bredon \cite{br}, Theorem 16.32, p.388.

\begin{lemma}
\label{tmf}If $X$ is a second countable $n$-hm$_{L},$ with or without
boundary, and $n\leq 2,$ then $X$ is a topological $n$-manifold.
\end{lemma}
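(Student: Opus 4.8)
The plan is to reduce everything to a local statement and then to dispose of the cases $n=0,1,2$ in turn, the last being the only serious one. Being a topological $n$-manifold (with boundary) is a local property, so I would fix $x\in X$ and study a small neighborhood. Each point is classified by its local cohomology $H^{\ast}_c(X,X\setminus\{x\};L)$ as either an \emph{interior} point, where this agrees with that of $\mathbb{R}^n$, or a \emph{boundary} point, where it agrees with that of the half-space $\mathbb{R}^n_+$; the set $\partial X$ of boundary points is closed. I would also record at the outset the standing regularity properties that an $n$-hm$_L$ automatically enjoys: it is locally compact, finite-dimensional, and cohomology locally connected (hence locally connected), and being in addition second countable and Hausdorff it is metrizable. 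These are exactly the hypotheses the surface recognition theorems will require.

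The cases $n\le 1$ are elementary. For $n=0$ the local cohomology of a point forces $X$ to be discrete, hence a $0$-manifold. For $n=1$, removing an interior point from a small connected neighborhood $U$ separates it into exactly two pieces (read off from $H^0_c$ and $H^1_c$ of $U\setminus\{x\}$), each a locally compact, locally connected, one-dimensional connected set with a single free end; the classical characterization of the arc then identifies $U$ with an interval, and a boundary point gives a half-open arc in the same way. Thus every point has a neighborhood homeomorphic to $\mathbb{R}$ or to $[0,\infty)$, and $X$ is a topological $1$-manifold.

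For $n=2$ I would first observe that $\partial X$, being a closed $1$-hm$_L$, is a topological $1$-manifold by the previous step, so it suffices to produce Euclidean charts at interior points. Here the strategy is a linking argument: choose arbitrarily small connected neighborhoods $U$ of $x$ such that $U\setminus\{x\}$ is a $1$-hm$_L$ with the cohomology of a circle. By the $n=1$ case $U\setminus\{x\}$ is a genuine circle, so $U$ is a homology $2$-disk bounded by an embedded circle, and the goal becomes to upgrade this to an actual homeomorphism $U\cong D^2$.

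That upgrade is the main obstacle, and it cannot be achieved by algebra alone: in dimension two one must invoke a genuine topological recognition theorem, such as the Kline sphere characterization or Wilder's theorem that a generalized $2$-manifold is a topological $2$-manifold. The substance of the argument is therefore to verify that the homology-manifold axioms deliver precisely the separation hypotheses these theorems demand --- essentially a Jordan-curve-theorem pattern, that every embedded circle in a sufficiently small neighborhood cuts it into two homology disks and that small punctured neighborhoods are homology annuli. Granting this, the recognition theorem supplies the chart $U\cong D^2$, and patching these interior charts together with the half-space charts along the $1$-manifold $\partial X$ exhibits $X$ as a topological $2$-manifold, completing the proof.
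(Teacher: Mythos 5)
The first thing to note is that the paper has no proof of this lemma to compare against: it is quoted wholesale from Bredon's \emph{Sheaf Theory} (Theorem 16.32, p.~388), i.e.\ it is a classical recognition theorem, essentially due to Wilder, that the paper imports as a black box. Your outline does reproduce the architecture of the classical proof correctly: manifoldness is a local property, the cases $n=0,1$ are elementary, and dimension two genuinely requires a topological recognition theorem (of Kline sphere characterization type) rather than algebra alone. In that sense your plan is the right plan.

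But as a proof it has a genuine hole, and the hole sits exactly where the theorem lives. For $n=2$ you reduce everything to verifying the separation hypotheses of the recognition theorem --- that every embedded circle in a small neighborhood separates it into two homology disks, that punctured neighborhoods are homology annuli --- and then write ``granting this.'' That verification (via duality between Borel--Moore homology and sheaf cohomology in an $n$-hm$_L$, local orientation questions, and uniformly in the coefficient ring $L$) is the actual mathematical content of Bredon's Theorem 16.32; nothing in your text supplies it. Worse, one of the two theorems you offer to invoke, ``Wilder's theorem that a generalized $2$-manifold is a topological $2$-manifold,'' \emph{is} the statement being proved, so that branch of your argument is circular; only the Kline route is legitimate, and it is the one needing the unproven verification. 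Finally, your treatment of boundary points does not close: knowing that $\partial X$ is abstractly a $1$-manifold and having charts at interior points does not produce half-plane charts at points of $\partial X$; one needs a further argument (for instance, double $X$ along $\partial X$, apply the boundaryless case to the double, and use the Schoenflies theorem to see that $\partial X$ is locally flat there). So what you have is a correct strategy with the decisive steps deferred --- which, in fairness, leaves you essentially where the paper itself stands, namely resting on the classical citation.
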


Let a finite group $G\ $act on a space (usually an $n$-hm$_{L}$) $X.$ The
group $G$ acts on the product $X\times EG$ diagonally, where $EG$ is the
total space of a classifying space $BG$. The Borel construction is the
quotient space of $X\times EG,$ denoted by $X_{G}.$ The Leray-Serre spectral
sequence for the fibration $X\rightarrow X_{G}\rightarrow BG$ is 
\begin{equation*}
H^{i}(G;H^{j}(X))\Longrightarrow H^{i+j}(X_{G}),
\end{equation*}%
which is also called the Borel spectral sequence. Let $R$ be a PID. When $G$
acts trivially on $H^{\ast }(X;R)$ (which is a finitely generated free $R$%
-module) and the Borel spectral sequence degenerates, we have $H^{\ast
}(X_{G};R)\cong H^{\ast }(B_{G};R)\bigotimes H^{\ast }(X;R)$ as graded
modules (cf. \cite{tom}, Proposition 1.18 of Chapter III, page 182). Let $%
\Sigma =\{x\in X\mid $ the stabilizer $G_{x}\neq 1\}$ be the singular set.
Let $S\supseteq \{1\}$ be a multiplicative set of $H^{\ast }(G)$ and $%
X^{S}=\{x\in X\mid S\cap \ker (H^{\ast }(G)\rightarrow H^{\ast
}(G_{x}))=\emptyset \}.$ The localization theorem says that the restricted
homomorphism 
\begin{equation*}
S^{-1}H^{\ast }(X_{G})\rightarrow S^{-1}H^{\ast }(X_{G}^{S})
\end{equation*}%
is an isomorphism (cf. Hsiang \cite{xiang}, p.40).

We first prove the following collapsing of spectral sequences.

\begin{lemma}
\label{collp}Let $G$ be a finite group acting on a manifold $M^{2n}$ with
vanishing odd-dimensional \emph{integral} cohomology groups and torsion-free
even dimensional integral cohomology groups. Suppose that the group action
is homologically trivial and $H^{i}(G;\mathbb{Z})=0$ for each odd $i.$ Then
the Borel spectral sequence $H^{i}(BG;H^{j}(M;\mathbb{Z}))\Longrightarrow
H^{i+j}(M_{G};\mathbb{Z})$ collapses.
\end{lemma}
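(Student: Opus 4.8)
The plan is to compute the $E_2$ page explicitly and then show by a parity count that every higher differential is forced to vanish. First I would identify the $E_2$ term of the Borel spectral sequence as $E_2^{i,j}=H^i(G;H^j(M;\mathbb{Z}))$, where $G$ acts on the coefficients through the induced action on cohomology. Because the action is homologically trivial, this local coefficient system is untwisted, so the coefficients are simply the $G$-module $H^j(M;\mathbb{Z})$ with trivial action. Since $H^j(M;\mathbb{Z})$ is finitely generated and torsion-free for even $j$ and vanishes for odd $j$, I can write $H^j(M;\mathbb{Z})\cong\mathbb{Z}^{\,b_j}$ for even $j$; as group cohomology commutes with finite direct sums, this gives $E_2^{i,j}\cong H^i(G;\mathbb{Z})^{\,b_j}$ for even $j$ and $E_2^{i,j}=0$ for odd $j$. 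Combining this with the hypothesis $H^i(G;\mathbb{Z})=0$ for odd $i$, I conclude that $E_2^{i,j}=0$ unless both $i$ and $j$ are even, so the nonzero entries sit exactly on the even--even sublattice.

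The second step is the parity argument. The $r$-th differential has bidegree $(r,1-r)$, that is $d_r\colon E_r^{i,j}\to E_r^{i+r,\,j-r+1}$, and each $E_r^{i,j}$ is a subquotient of $E_2^{i,j}$, hence vanishes whenever $E_2^{i,j}$ does. Suppose $E_r^{i,j}\neq 0$, so both $i$ and $j$ are even. If $r$ is even, the target bidegree has second coordinate $j-r+1$ odd, and the target group is zero; if $r$ is odd, the target has first coordinate $i+r$ odd, and the target group is again zero. Either way $d_r=0$, so all differentials vanish for $r\geq 2$ and the spectral sequence collapses at $E_2$.

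The one point genuinely requiring care, and the place where the torsion-freeness hypothesis is used, is the identification of the $E_2$ page. If $H^j(M;\mathbb{Z})$ carried torsion, then even with $H^i(G;\mathbb{Z})=0$ for odd $i$ the groups $H^i(G;H^j(M;\mathbb{Z}))$ could be nonzero in odd degree $i$, since torsion coefficients can produce odd-degree group-cohomology classes, and the even--even pattern would break down. Thus the torsion-free and odd-vanishing hypotheses on $H^*(M)$ together with the odd-vanishing of $H^*(G;\mathbb{Z})$ are precisely what concentrate the nonzero entries on the even--even lattice; once that is established the collapse is a purely formal consequence of the bidegree of $d_r$.
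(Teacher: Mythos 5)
Your proof is correct and follows essentially the same route as the paper's: both arguments reduce to the observation that the hypotheses concentrate the nonzero $E_2$-entries $H^{i}(G;H^{j}(M;\mathbb{Z}))\cong H^{i}(G;\mathbb{Z})^{b_{j}}$ on the even--even lattice, after which every differential $d_{r}$ vanishes by parity of its bidegree. Your write-up is merely a more systematic organization (checkerboard first, then the parity count) of the paper's case split on the parity of the page index.
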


\begin{proof}
The differential map 
\begin{equation*}
d_{s}:H^{i}(BG;H^{j}(M;\mathbb{Z}))\rightarrow H^{i+s+1}(BG;H^{j-s}(M;%
\mathbb{Z}))
\end{equation*}%
is trivial when $s$ is even, since the odd dimensional cohomology group of $%
G $ is trivial. When $s$ is odd, the map $d_{s}$ is still trivial since
either $H^{j}(M;\mathbb{Z})$ or $H^{j-s}(M;\mathbb{Z})$ is trivial.
\end{proof}

The following is essentially Cor. 9.3 of Bredon \cite{br} (p.249).

\begin{lemma}
\label{bred}The inclusion $\Sigma \rightarrow M$ induces isomorphism $%
H^{i}(M_{G};\mathcal{A})\rightarrow H^{i}(\Sigma _{G};\mathcal{A})$ for any
coefficients $\mathcal{A}$ and $i>n.$
\end{lemma}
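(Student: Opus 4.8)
The plan is to reduce the statement to the vanishing of the relative equivariant cohomology $H^{i}(M_{G},\Sigma_{G};A)$ for $i>n$, to identify that relative group with the equivariant cohomology of the free part $M\setminus\Sigma$, and finally to observe that the Borel construction on the free part collapses to the orbit space, whose cohomological dimension is exactly $n=\dim_{L}M$.

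First I would record that $\Sigma$ is a closed, $G$-invariant subset of $M$: it is invariant because $G_{gx}=gG_{x}g^{-1}$, and it is closed because, for the finite group $G$ acting on the Hausdorff space $M$, the set $\Sigma=\bigcup_{g\neq 1}\{x\mid gx=x\}$ is a finite union of closed fixed-point sets. Consequently $\Sigma_{G}$ is a closed subspace of $M_{G}$ whose open complement is precisely $(M\setminus\Sigma)_{G}$. Applying the excision property of sheaf cohomology recorded in the Preliminary (for a closed subset $F\subset X$ one has $H^{k}(X,F;\mathcal{A})\cong H^{k}(X\setminus F;\mathcal{A})$) to the closed subspace $\Sigma_{G}\subset M_{G}$ yields a natural isomorphism $H^{i}(M_{G},\Sigma_{G};A)\cong H^{i}((M\setminus\Sigma)_{G};A)$.

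Next I would exploit freeness. By the definition of $\Sigma$, the group $G$ acts freely on $M\setminus\Sigma$, so the projection $(M\setminus\Sigma)\times EG\to M\setminus\Sigma$ descends to a bundle $(M\setminus\Sigma)_{G}\to(M\setminus\Sigma)/G$ with contractible fibre $EG$; this is a cohomology equivalence, whence $H^{i}((M\setminus\Sigma)_{G};A)\cong H^{i}((M\setminus\Sigma)/G;A)$. Since $G$ is finite and acts freely, the orbit space $(M\setminus\Sigma)/G$ is again a cohomology manifold over $L$ of the same dimension as $M\setminus\Sigma$, namely $n=\dim_{L}M$. Its cohomology with compact supports therefore vanishes above its cohomological dimension, so $H^{i}((M\setminus\Sigma)/G;A)=0$ for all $i>n$.

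Finally I would feed this into the long exact sequence of the pair $(M_{G},\Sigma_{G})$. For $i>n$ both $H^{i}(M_{G},\Sigma_{G};A)$ and $H^{i+1}(M_{G},\Sigma_{G};A)$ vanish, so the segment $H^{i}(M_{G},\Sigma_{G})\to H^{i}(M_{G})\to H^{i}(\Sigma_{G})\to H^{i+1}(M_{G},\Sigma_{G})$ forces the restriction $H^{i}(M_{G};A)\to H^{i}(\Sigma_{G};A)$ to be an isomorphism, as claimed. The step I expect to require the most care is the dimension count: one must verify that the orbit space of the free action really is a cohomology $n$-manifold over $L$, so that the cohomological-dimension bound applies, and that the collapse of the Borel construction to the orbit space is valid for sheaf cohomology with compact supports rather than merely for ordinary singular cohomology. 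This technical content is exactly what is packaged in Bredon's Corollary 9.3 \cite{br}.
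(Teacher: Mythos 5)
You have reconstructed the right architecture --- the paper itself gives no argument here, merely citing Bredon's \emph{Sheaf Theory}, Corollary 9.3, and the standard proof does run through the long exact sequence of the pair $(M_{G},\Sigma_{G})$, the free part $M\setminus\Sigma$, and a dimension bound on the orbit space. But your two central identifications require mutually incompatible support conventions, and no single cohomology theory makes both of them true as stated. The isomorphism $H^{i}(M_{G},\Sigma_{G};A)\cong H^{i}((M\setminus\Sigma)_{G};A)$ you take from the Preliminary is a fact about cohomology \emph{with compact supports}; for ordinary (closed-support) cohomology the formula $H^{k}(X,F)\cong H^{k}(X\setminus F)$ for closed $F$ is false (take $X=S^{1}$ and $F$ a point: $H^{1}(S^{1},F)\cong\mathbb{Z}$ while $H^{1}(S^{1}\setminus F)=0$). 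The collapse $H^{i}((M\setminus\Sigma)_{G};A)\cong H^{i}((M\setminus\Sigma)/G;A)$ along the bundle with fibre $EG$ is a homotopy-invariance statement, valid for ordinary cohomology but \emph{false} with compact supports, precisely because $EG$ is not compact: already $H_{c}^{\ast}(\mathbb{R})\not\cong H_{c}^{\ast}(\mathrm{pt})$ shows that contractible non-compact fibres destroy compact-support invariance (worse, $M_{G}$ is not locally compact, so compactly supported sheaf cohomology is not even well behaved on it). Since the lemma is applied in the paper to ordinary cohomology of $M_{G}$ --- the groups computed by the Borel spectral sequence over $BG$ --- the version actually needed is the closed-support one, and for that version your excision step fails. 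Your final sentence concedes that making the bundle collapse compatible with the supports is ``exactly what is packaged in Bredon's Corollary 9.3,'' but that corollary \emph{is} the statement being proved, so at the decisive point the argument is circular.

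The standard ways to close the gap are: (i) stay with ordinary cohomology and replace excision by a comparison of the Leray spectral sequences of $M_{G}\rightarrow M/G$ and $\Sigma_{G}\rightarrow \Sigma/G$; for $q>0$ the Leray sheaves $\mathcal{H}^{q}$ have stalk $H^{q}(BG_{x};A)$ at the orbit of $x$, hence vanish over the free part and are concentrated on the closed set $\Sigma/G$, so the two $E_{2}$-pages can differ only in the row $q=0$, and that row vanishes in degrees $p>n$ because $\dim_{L}M/G\leq n$; or (ii) replace $EG$ by compact finite-dimensional approximations $E_{N}G$ (iterated joins of copies of $G$), for which all spaces involved are compact, compact and closed supports agree, both excision and the dimension bound apply, and then let $N\rightarrow \infty$ in each fixed degree $i$. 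The surrounding steps of your proposal --- closedness and $G$-invariance of $\Sigma$, the covering-space argument giving $\dim_{L}\bigl((M\setminus\Sigma)/G\bigr)=n$, and the final long-exact-sequence argument --- are correct; the missing content is exactly the support bookkeeping above, which is the entire technical substance of the lemma.
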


We will need the following lemma, part of whose proof is already contained
in Prop. 2.4 of Edmonds \cite{ad}.

\begin{lemma}
\label{first}Let $G=\mathbb{Z}/p$ be a cyclic group of a prime order $p$
acting effectively and homologically trivially on a simply connected closed
topological 4-manifold $M.$ The fixed point set $M^{G}$ is a disjoint union
of spheres $S^{2}$ and discrete points.
\end{lemma}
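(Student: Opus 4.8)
The plan is to use Smith theory together with the Borel formula (Lemma~\ref{borel}) to control the dimension and structure of the fixed point set $M^G$ where $G=\mathbb{Z}/p$. First I would apply the Local Smith Theorem (Lemma~\ref{sm}): each component of $M^G$ is an $r$-hm$_{L}$ with $r\leq 4$, and since $M$ is a closed $4$-manifold, the fixed-point set is a disjoint union of homology submanifolds of dimensions $0,1,2,3$. The odd-dimensional cases must be ruled out. For $p$ odd, Lemma~\ref{sm} already forces even codimension, so each component has dimension $0$, $2$, or $4$; and dimension $4$ is impossible since the action is effective (the fixed set cannot be all of $M$). For $p=2$ the parity statement in Lemma~\ref{sm} does not apply directly, so I would instead invoke Lemma~\ref{Bredon}: since $M$ is simply connected and the action is homologically trivial, the action preserves the fundamental class and hence preserves local orientation, forcing $n-r=4-r$ to be even, so again $r\in\{0,2,4\}$ and $r=4$ is excluded by effectiveness.

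Next I would pin down the global topology of the positive-dimensional components, which must be closed surfaces (each being a $2$-hm$_L$, hence a genuine closed topological $2$-manifold by Lemma~\ref{tmf}). The key tool here is the Borel formula (Lemma~\ref{borel}) applied to $G=\mathbb{Z}/p$ acting on the cohomology $4$-manifold $M$ mod $p$. Since $G$ itself is cyclic of prime order there is a single subgroup of index $p$, namely the trivial one, so the Borel formula degenerates but still provides the constraint linking the cohomology dimension of the fixed set to $n=4$. To identify which surfaces occur, I would compute the Euler characteristic of $M^G$ using the classical Smith-theoretic relation $\chi(M^G)=\chi(M)$ modulo $p$, or more precisely the fact that for a homologically trivial action the Lefschetz number equals $\chi(M)=b_2(M)+2$, while at the same time $\chi(M^G)$ equals the total Euler characteristic of the fixed components. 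The homological triviality is what makes the Lefschetz fixed-point computation yield the full Euler characteristic rather than a mod-$p$ reduction.

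The decisive step is to rule out higher-genus surfaces, leaving only $S^2$ (and isolated points). Here I would use that $M$ is simply connected together with a Smith-inequality / equivariant cohomology argument: the total mod-$p$ Betti number of $M^G$ is bounded by that of $M$, namely $\sum_i \dim_{\mathbb{Z}/p} H^i(M^G;\mathbb{Z}/p)\leq \sum_i \dim_{\mathbb{Z}/p} H^i(M;\mathbb{Z}/p)=b_2(M)+2$. Combining this Smith inequality with the Euler-characteristic count forces each positive-dimensional component to have the rational cohomology of $S^2$; being a closed surface, it must then be $S^2$ itself. This is essentially the content already sketched in Prop.~2.4 of Edmonds~\cite{ad}, which the statement cites, so I would follow that outline for the surface identification.

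The main obstacle I anticipate is the absence of local linearity: in the smooth or locally linear setting one would diagonalize the isotropy representation at a fixed point and read off the codimension and orientation behavior directly, but here that machinery is unavailable. Consequently the parity and codimension facts must be extracted purely cohomologically through Lemmas~\ref{sm}, \ref{Bredon}, and~\ref{borel}, and the identification of the fixed components as honest manifolds relies on the low-dimensional regularity result Lemma~\ref{tmf}. I expect the $p=2$ orientation bookkeeping to be the most delicate point, since that is exactly where Smith theory's odd-prime parity statement fails and one must substitute the orientation-preservation argument coming from homological triviality and simple-connectivity.
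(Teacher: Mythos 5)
Your skeleton up through ``$M^G$ is a disjoint union of points and closed surfaces'' is the same as the paper's: Lemma~\ref{sm} plus Lemma~\ref{Bredon} (with orientation-preservation deduced from homological triviality) give even codimension, effectiveness together with invariance of domain excludes a $4$-dimensional component, and Lemma~\ref{tmf} upgrades the $2$-dimensional pieces to genuine closed surfaces. The decisive step is where you genuinely diverge. The paper compares two computations of $H^{5}(M_{G};\mathbb{Z})$: by Lemma~\ref{bred} the inclusion $\Sigma\hookrightarrow M$ gives $H^{5}(M_{G})\cong H^{5}(\Sigma_{G})$; semi-freeness of a prime-order action gives $\Sigma_{G}=\Sigma\times BG$, whose degree-$5$ part reduces to $H^{4}(G;H^{1}\Sigma)\cong(\mathbb{Z}/p)^{b_{1}(\Sigma)}$; on the other side the Borel spectral sequence of $M$ collapses (Lemma~\ref{collp}) and all terms in total degree $5$ vanish, so $H^{5}(M_{G})=0$, whence $b_{1}(\Sigma)=0$. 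You instead use the Smith--Floyd inequality plus Lemma~\ref{euler}: the total mod-$p$ Betti number of $M^{G}$ is at most $b_{2}(M)+2=\chi(M)=\chi(M^{G})$, and since componentwise (total Betti number)$-\chi=2b_{1}(\cdot\,;\mathbb{Z}/p)\geq 0$, every component has vanishing first mod-$p$ Betti number. This is a legitimate and more elementary alternative for that step (though note the Smith inequality is not among the paper's quoted lemmas, and your appeal to Lemma~\ref{borel} is vacuous here: for $G=\mathbb{Z}/p$ the only index-$p$ subgroup is the trivial one and the formula reads $4-r=4-r$).

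The genuine gap is at odd $p$. Vanishing of $b_{1}(\cdot\,;\mathbb{Z}/p)$ forces a closed surface to be $S^{2}$ when $p=2$, but for odd $p$ it also allows $\mathbb{RP}^{2}$: one has $b_{1}(\mathbb{RP}^{2};\mathbb{Z}/p)=0$, total mod-$p$ Betti number $1$, and $\chi(\mathbb{RP}^{2})=1$, so an $\mathbb{RP}^{2}$ component passes both your Smith inequality and the Euler-characteristic count with equality. Hence your sentence ``forces each positive-dimensional component to have the rational cohomology of $S^{2}$'' is a non sequitur --- what your argument actually forces is only $b_{1}\equiv 0$ mod $p$. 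To eliminate $\mathbb{RP}^{2}$ for odd $p$ you need an extra input, for instance orientability (over $\mathbb{Z}/p$) of fixed-point components of odd-prime-order actions on orientable cohomology manifolds, or simply the full statement of Edmonds' Prop.~2.4 \cite{ad}. In fairness, the paper's own written argument ($b_{1}(\Sigma;\mathbb{Z})=0$, which $\mathbb{RP}^{2}$ also satisfies) has exactly the same leak and is patched by the citation of Edmonds; your proposal stands on the same footing, but the deferral to Edmonds must be understood as carrying that part of the proof rather than as a stylistic ``outline to follow.''
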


\begin{proof}
By the Smith theory, the fixed point set $M^{G}$ is a homological manifold
over $\mathbb{Z}/p$ of codimension even (cf. Lemma \ref{sm} and Lemma \ref%
{Bredon}). Lemma \ref{tmf} implies that $M^{G}$ is a topological manifold.
It is enough to prove the first Betti number of $M^{G}$ is zero. But this is
already known by Edmonds \cite{ad}. For convenience, we repeat the proof.

By Lemma \ref{bred}, $H^{5}(M_{G};\mathbb{Z})\cong H^{5}(\Sigma _{G};\mathbb{%
Z}).$ When $p$ is prime, the action of $G=\mathbb{Z}/p$ is semi-free and $%
\Sigma $ is the fixed point set. Therefore, we have the following
isomorphisms 
\begin{eqnarray*}
H^{5}(M_{G};\mathbb{Z}) &\cong &H^{5}(\Sigma _{G};\mathbb{Z})=H^{5}(\Sigma
\times BG;\mathbb{Z})\cong \tbigoplus\nolimits_{i+j=5}H^{i}(G;H^{j}(F;%
\mathbb{Z)}) \\
&\cong &H^{4}(G;H^{1}(\Sigma ;\mathbb{Z})).
\end{eqnarray*}%
Note that $H^{i}(G;\mathbb{Z})=0$ when $i$ is odd and $\mathbb{Z}/p$ when $%
i>0$ is even. This implies that the Borel spectral sequence $H^{i}(G;H^{j}(M;%
\mathbb{Z}))\Longrightarrow H^{i+j}(M_{G};\mathbb{Z})$ collapses by Lemma %
\ref{collp}. Therefore, the $p$-rank of $H^{5}(M_{G};\mathbb{Z})$ is the
same as that of $\tbigoplus\nolimits_{i+j=5}H^{i}(G;H^{j}(M;\mathbb{Z}%
))\cong 0.$ Thus the first Betti number $b_{1}(\Sigma )=0,$ which gives that 
$\Sigma $ is a disjoint union of spheres $S^{2}$ and discrete points$.$
\end{proof}

\bigskip

We need two more lemmas from Edmonds \cite{ad} (Prop. 1.2 and Cor. 2.6).

\begin{lemma}
\label{euler}Let $g$ be a periodic map of prime order $p$ which acts
orientation-preservingly on a closed four-manifold $M$ with $H_{1}(M;\mathbb{%
Z})=0$. Then the Euler characteristic $\chi (\mathrm{Fix}(g))=\chi
(M)=2+b_{2}(M).$
\end{lemma}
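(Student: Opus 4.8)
The statement bundles two separate equalities, and my plan is to establish each in turn. The equality $\chi(M)=2+b_{2}(M)$ is purely homological: since $H_{1}(M)=0$, the universal coefficient theorem gives $H^{1}(M;\mathbb{Z}/2)=0$, so $w_{1}(M)=0$ and $M$ is orientable. Rational Poincar\'{e} duality then forces $b_{0}=b_{4}=1$ and $b_{1}=b_{3}=0$, whence
\begin{equation*}
\chi(M)=b_{0}-b_{1}+b_{2}-b_{3}+b_{4}=2+b_{2}(M).
\end{equation*}

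The real content is the equality $\chi(\mathrm{Fix}(g))=\chi(M)$, which I would obtain from the Lefschetz fixed point theorem. Writing $L(g)=\sum_{i}(-1)^{i}\,\mathrm{tr}\bigl(g^{\ast}\mid H^{i}(M;\mathbb{Q})\bigr)$ for the Lefschetz number, the hypothesis that $g$ acts homologically trivially means $g^{\ast}$ is the identity on every $H^{i}(M;\mathbb{Q})$, so $\mathrm{tr}(g^{\ast}\mid H^{i})=b_{i}(M)$ and therefore $L(g)=\sum_{i}(-1)^{i}b_{i}(M)=\chi(M)$. It thus remains to identify the Lefschetz number with the Euler characteristic of the fixed set.

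For the identity $L(g)=\chi(\mathrm{Fix}(g))$ I would exploit that $g$ is periodic of prime order. By Smith theory together with invariance of domain and Lemma \ref{tmf} (exactly as in the proof of Lemma \ref{first}, using Lemmas \ref{sm} and \ref{Bredon}), $\mathrm{Fix}(g)$ is a disjoint union of closed topological manifolds of even codimension, hence of dimension $0$ or $2$. Applying the Lefschetz--Hopf fixed-point index theorem for the compact ENR $M$ localizes $L(g)$ to a sum of indices, one for each component $F_{0}$ of $\mathrm{Fix}(g)$; the remaining task is to show that the local index of the periodic map $g$ about $F_{0}$ equals $\chi(F_{0})$, so that summing over components yields $L(g)=\chi(\mathrm{Fix}(g))$.

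The hard part is precisely this local index computation in the purely topological setting. In the locally linear case one takes an invariant tubular neighborhood and notes that $g$ acts on the normal fibre with no nonzero fixed vector, so $\det(I-A)>0$ for the normal linear map $A$ and the local index is $\chi(F_{0})$; but without local linearity there is no normal representation available, which is the obstacle flagged in the introduction. Moreover one cannot simply invoke an equivariant triangulation and the simplicial Hopf trace formula, since topological four-manifolds and topological actions need not be (equivariantly) triangulable. I would therefore work entirely within the fixed-point index theory for compact ANRs and appeal to the general fact that the index of a finite-order homeomorphism along a component of its fixed set equals the Euler characteristic of that component --- this is the substance of Edmonds' Prop.\ 1.2 --- thereby avoiding any appeal to normal representations. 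It is worth noting that the naive mod-$p$ additivity $\chi(M)=\chi(\mathrm{Fix}(g))+p\,\chi_{c}\bigl((M\setminus\mathrm{Fix}(g))/G\bigr)$ only delivers the congruence $\chi(\mathrm{Fix}(g))\equiv\chi(M)\pmod{p}$ and is insufficient for the exact equality, which is exactly why the index theorem for periodic maps is indispensable.
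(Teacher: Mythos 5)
Your first two reductions are sound: $H_{1}(M)=0$ forces orientability and, via Poincar\'{e} duality, $\chi(M)=2+b_{2}(M)$; and homological triviality gives $L(g)=\chi(M)$. But the proof collapses at exactly the point you yourself flag as ``the hard part.'' The identity you need --- that the fixed-point index of a finite-order homeomorphism along a component of its fixed set equals the Euler characteristic of that component --- is never established: you justify it by appealing to ``the substance of Edmonds' Prop.\ 1.2.'' That proposition \emph{is} the present lemma; the paper states Lemma \ref{euler} as a direct quotation of Edmonds' Prop.\ 1.2 and gives no proof of its own. So as a self-contained argument yours is circular: at the decisive step it invokes the result being proved. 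And in the purely topological category the local index identity is not a more elementary fact one can safely outsource --- it is if anything harder than the global equality, and is normally deduced \emph{from} it, not used to prove it.

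The deeper problem is your closing paragraph, which dismisses the route that actually works. Homological triviality has a second use beyond computing $L(g)$: by the transfer, $H^{\ast}(M/G;\mathbb{Q})\cong H^{\ast}(M;\mathbb{Q})^{G}=H^{\ast}(M;\mathbb{Q})$, so $\chi(M/G)=\chi(M)$. Feeding this into the classical Smith--Floyd Euler characteristic formula for $G=\mathbb{Z}/p$ acting on a compact finite-dimensional space (equivalently, your ``naive'' additivity $\chi(M)=\chi(F)+p\,\chi_{c}((M\setminus F)/G)$ together with $\chi_{c}((M\setminus F)/G)=\chi(M/G)-\chi(F)$, where $F=\mathrm{Fix}(g)$),
\begin{equation*}
\chi(M)+(p-1)\chi(F)=p\,\chi(M/G)=p\,\chi(M),
\end{equation*}
one gets $(p-1)\bigl(\chi(M)-\chi(F)\bigr)=0$, i.e.\ the \emph{exact} equality $\chi(F)=\chi(M)$, not merely a congruence mod $p$. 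This global counting argument --- multiplicativity of $\chi_{c}$ on the free part plus the transfer isomorphism --- is Edmonds' actual proof, requires no fixed-point index theory and no local normal data, and is precisely where the hypothesis of homological triviality enters a second time. In short: the step your proof rests on is unproved (and circularly cited), while the approach you declared ``insufficient'' is the correct and complete one.
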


\begin{lemma}
\label{spherehom}Let $g$ be a periodic map of prime order $p$ which acts
orientation-preservingly on a closed four-manifold $M$ with $H_{1}(M;\mathbb{%
Z})=0$. If $\mathrm{Fix}(g)$ is not purely 2-dimensional; then the
2-dimensional components of $\mathrm{Fix}(g)$ represent independent elements
of $H_{2}(M;F_{p})$. If it is purely two-dimensional; and has $k$
two-dimensional components, then the two-dimensional components span a
subspace of $H_{2}(M;F_{p})$ of dimension at least $k-1$; with any $k-1$
components representing independent elements.
\end{lemma}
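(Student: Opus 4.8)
The plan is to prove both assertions at once by working $G$-equivariantly and reducing everything to the restriction of equivariant Poincaré duals to the components of the fixed set. Write $G=\mathbb{Z}/p$ and $F=\mathrm{Fix}(g)=\bigsqcup_{i=1}^{k}F_{i}\sqcup P$, where $F_{1},\dots ,F_{k}$ are the $2$-dimensional components and $P$ is the (possibly empty) set of isolated fixed points. By Smith theory (Lemmas \ref{sm} and \ref{Bredon}) together with Lemma \ref{tmf}, the $F_{i}$ are disjoint closed surfaces, the action is semifree near $F$, and $G$ rotates the normal $2$-plane to each $F_{i}$ nontrivially. Let $t\in H^{\ast }(BG;\mathbb{Z}/p)$ be the canonical generator. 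Each $F_{i}$ carries an equivariant Poincaré dual $\tau _{i}\in H^{2}_{G}(M;\mathbb{Z}/p)$, built from an equivariant Thom class of its normal disk bundle, whose image under the fibre restriction $r\colon H^{\ast }_{G}(M)\to H^{\ast }(M)$ is the ordinary dual $[F_{i}]^{\vee }$. The whole argument rests on three restriction computations for the $\tau _{i}$.

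\emph{The key local restrictions are the following.} First, the restriction of $\tau _{i}$ to a surface $F_{j}$ with $j\neq i$ vanishes, since $F_{i}$ and $F_{j}$ are disjoint and $\tau _{i}$ is supported near $F_{i}$; likewise $\tau _{i}$ restricts to $0$ at each isolated point $x\in P$. Second, the restriction of $\tau _{i}$ to $F_{i}$ itself is the equivariant Euler class of the normal bundle, which in $H^{2}_{G}(F_{i};\mathbb{Z}/p)$ decomposes as $e_{i}\,\omega _{i}+m_{i}\,t$, where $\omega _{i}$ generates $H^{2}(F_{i};\mathbb{Z}/p)$, $e_{i}$ is the ordinary normal Euler number, and $m_{i}$ is the local rotation number of $G$ on the normal plane. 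The essential point, and the reason the self-intersection numbers $e_{i}$ never have to be evaluated, is that $m_{i}$ is a unit mod $p$, because a semifree action of prime order rotates the normal fibre nontrivially.

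\emph{The global reduction then proceeds via localization.} Since $H_{1}(M)=0$ forces $H^{1}(M;\mathbb{Z}/p)=0$, the Borel spectral sequence $H^{i}(BG;H^{j}(M))\Rightarrow H^{i+j}(M_{G})$ has vanishing $E_{2}^{0,1}$ and $E_{2}^{1,1}$ terms, so the kernel of $r$ in degree $2$ is exactly $\mathbb{Z}/p\cdot t$, while each $[F_{i}]^{\vee }$ lifts to $\tau _{i}$ by construction. Now suppose $\sum _{i}a_{i}[F_{i}]^{\vee }=0$ in $H^{2}(M;\mathbb{Z}/p)$. Then $\sum _{i}a_{i}\tau _{i}\in \ker r$, so $\sum _{i}a_{i}\tau _{i}=\lambda t$ for some $\lambda \in \mathbb{Z}/p$. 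Restricting this identity to $F_{j}$ and reading off the $t$-component gives $a_{j}m_{j}=\lambda $ (the $\omega _{j}$-component only records $a_{j}e_{j}=0$ and is not needed), hence $a_{j}=\lambda \,m_{j}^{-1}$ for every $j$. If $P\neq \emptyset $, restricting instead to an isolated point $x$ gives $0=\lambda t$, so $\lambda =0$ and all $a_{j}=0$: the $[F_{i}]$ are linearly independent, which is the first assertion. If $F$ is purely $2$-dimensional, the relations $(a_{j})=\lambda (m_{j}^{-1})$ form a space of dimension at most one, so the $[F_{i}]$ span a subspace of dimension at least $k-1$; and dropping any one surface $F_{\ell }$ and restricting a putative relation $\sum _{j\neq \ell }a_{j}\tau _{j}=\lambda t$ to the omitted $F_{\ell }$ (disjoint from all the rest) forces $\lambda =0$, so any $k-1$ components are independent. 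This is the second assertion.

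The main obstacle is the construction and computation of the equivariant Poincaré duals $\tau _{i}$ in the purely topological category: the $F_{i}$ are only locally flat homology submanifolds, so I must produce equivariant Thom and Euler classes from their topological normal structures and justify the restriction formula $\tau _{i}|_{F_{i}}=e_{i}\omega _{i}+m_{i}t$ through the localization theorem (cf.\ \cite{xiang}) rather than a smooth tubular neighbourhood. A secondary technical point is the spectral-sequence identification of $\ker r$ together with the bookkeeping of the rotation numbers $m_{i}$ and of local orientations (and the degree shift when $p=2$, where $t$ has degree $1$ and the normal Euler class contributes a $t^{2}$ term); these affect the precise signs in the relations but not the dimension counts, so the two displayed conclusions are unchanged.
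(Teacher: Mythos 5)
Your argument has the right architecture, and in the smooth or locally linear category it would be essentially a complete proof: the three restriction properties of the equivariant duals $\tau_i$, together with the identification $\ker r=\mathbb{Z}/p\cdot t$ (which is correct: $H_1(M)=0$ gives $H^1(M;\mathbb{Z}/p)=0$, killing $E_2^{0,1}$ and $E_2^{1,1}$, and a fixed point gives a section of $M_G\to BG$), force any relation $\sum_i a_i[F_i]^{\vee}=0$ into the one-parameter family $a_j=\lambda m_j^{-1}$, which yields both assertions at once --- including the refinement that any $k-1$ components are independent, precisely because every coordinate $m_j^{-1}$ of the kernel vector is a unit. Note that the paper itself contains no proof to compare against: this lemma is quoted verbatim from Edmonds \cite{ad} (Cor. 2.6), so your proposal has to stand on its own.

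It does not yet stand, and the genuine gap is exactly at the point you label ``the main obstacle''; flagging an obstacle is not the same as overcoming it. The lemma --- and the whole paper, whose stated purpose is to treat actions without local linearity --- concerns homeomorphisms, so the fixed surfaces $F_i$ may be wildly embedded: there is no normal disk bundle, no equivariant tubular neighbourhood, and no rotation number. Two of your steps invoke such data. The first, existence of $\tau_i$, is repairable without it: Alexander duality gives $H^{j}(M,M-F_i;\mathbb{Z}/p)\cong H^{BM}_{4-j}(F_i;\mathbb{Z}/p)$, so the Borel spectral sequence of the pair yields $H^2_G(M,M-F_i;\mathbb{Z}/p)\cong\mathbb{Z}/p$, and $\tau_i$ can be taken to be the image of a generator; the vanishing of $\tau_i|_{F_j}$ $(j\neq i)$ and of $\tau_i|_{x}$ for $x\in P$ then follows from the support argument exactly as you say. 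The second step is never repaired: the claim that the $t$-component $m_i$ of $\tau_i|_{F_i}$ is a unit mod $p$ is justified only by ``a semifree action of prime order rotates the normal fibre nontrivially,'' which is smooth-category reasoning with no meaning for a wild fixed surface. This claim --- equivalently, that $\tau_i$ restricts to a nonzero element of $H^2_G(\{x\};\mathbb{Z}/p)\cong\mathbb{Z}/p$ at a point $x\in F_i$ --- is the entire engine of your proof: if $m_j=0$ were possible, the equation $a_jm_j=\lambda$ yields nothing, and both conclusions collapse. Establishing it for cohomology-manifold fixed sets is genuinely nontrivial; it amounts to the invertibility of the localized equivariant Euler class that underlies Borel's formula (Lemma \ref{borel}), and needs the Borel--Hsiang localization machinery applied to the pair $(M,M-F_i)$, not merely a citation of the localization theorem for $M$. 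Until that computation is actually supplied, the proposal is an outline of the standard smooth argument rather than a proof of the statement in the topological generality in which the paper uses it.
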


\begin{corollary}
\label{cor1}Let $M$ be a closed four-manifold $M$ with $H_{1}(M;\mathbb{Z}%
)=0 $ and $G$ a finite group acting on $M$ effectively and homologically
trivially. Suppose that $s\in G$ is of order $p$ and $t$ is a normalizer of
the cyclic subgroup $\langle s\rangle .$ When $b_{2}(M)\geq 3,$ the element $%
t$ acts invariantly and homologically trivially on each 2-sphere in the
fixed point set $\mathrm{Fix}(s)$.
\end{corollary}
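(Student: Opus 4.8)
The plan is to combine the structure of $\mathrm{Fix}(s)$ from Lemma \ref{first} with the homological independence of its $2$-sphere components from Lemma \ref{spherehom}. Since $s$ has prime order $p$, we have $\mathrm{Fix}(s) = M^{\langle s\rangle}$, and the argument of Lemma \ref{first} uses only $H_{1}(M)=0$ (via Poincar\'{e} duality $H^{1}(M)=H^{3}(M)=0$), so $\mathrm{Fix}(s)$ is a disjoint union of $2$-spheres $\Sigma_{1},\dots,\Sigma_{k}$ and isolated points. Because $t$ normalizes $\langle s\rangle$, for $x\in M^{\langle s\rangle}$ one has $s(tx)=t(t^{-1}st)x=tx$ as $t^{-1}st\in\langle s\rangle$; hence $t$ preserves $\mathrm{Fix}(s)$ setwise. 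Being a homeomorphism, $t$ carries each component to a component of the same local cohomological dimension, so it permutes the $2$-spheres by a permutation $\sigma$, restricting to homeomorphisms $t|_{\Sigma_{i}}\colon\Sigma_{i}\to\Sigma_{\sigma(i)}$.

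Next I would record the induced action on homology. Since $H^{1}(M;\mathbb{Z}/2)=0$ the manifold $M$ is orientable, and homological triviality makes $s$ orientation-preserving, so Lemma \ref{spherehom} applies to $s$. Fixing an orientation of each $\Sigma_{i}$, the map $t|_{\Sigma_{i}}$ is a homeomorphism $S^{2}\to S^{2}$ of degree $\epsilon_{i}=\pm1$, whence $t_{\ast}[\Sigma_{i}]=\epsilon_{i}[\Sigma_{\sigma(i)}]$ in $H_{2}(M;\mathbb{Z})$. Homological triviality gives $t_{\ast}=\mathrm{id}$, so I obtain the key relation $[\Sigma_{i}]=\epsilon_{i}[\Sigma_{\sigma(i)}]$ for every $i$. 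Here $H_{2}(M;\mathbb{Z})$ is free abelian (from $H_{1}(M)=0$ together with universal coefficients and Poincar\'{e} duality), so such a relation can only reflect genuine $\mathbb{Z}$-linear dependence of the classes.

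The conclusion then reduces to showing this relation forces $\sigma(i)=i$ and $\epsilon_{i}=1$. If $\mathrm{Fix}(s)$ is not purely $2$-dimensional, Lemma \ref{spherehom} makes $[\Sigma_{1}],\dots,[\Sigma_{k}]$ linearly independent over $\mathbb{Z}/p$, hence nonzero and independent over $\mathbb{Z}$; then $\sigma(i)\neq i$ would give a nontrivial relation among independent classes, so $\sigma(i)=i$, and $(1-\epsilon_{i})[\Sigma_{i}]=0$ with $[\Sigma_{i}]\neq0$ forces $\epsilon_{i}=1$. If $\mathrm{Fix}(s)$ is purely $2$-dimensional, then $2k=\chi(\mathrm{Fix}(s))=2+b_{2}(M)$ by Lemma \ref{euler}, so $b_{2}(M)$ is even and $b_{2}(M)\geq3$ forces $b_{2}(M)\geq4$, hence $k\geq3$; now any $k-1\geq2$ of the classes are independent by Lemma \ref{spherehom}, so if $\sigma(i)\neq i$ then $\Sigma_{i}$ and $\Sigma_{\sigma(i)}$ both lie in some independent $(k-1)$-element subset, contradicting $[\Sigma_{i}]=\epsilon_{i}[\Sigma_{\sigma(i)}]$; thus again $\sigma(i)=i$ and $\epsilon_{i}=1$. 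Therefore $t$ fixes each $\Sigma_{i}$ setwise (invariance) and $t|_{\Sigma_{i}}$ has degree $+1$, so it acts as the identity on $H_{2}(\Sigma_{i};\mathbb{Z})=\mathbb{Z}$ and on $H_{0}$, i.e. homologically trivially.

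The only delicate point is the purely $2$-dimensional case, where Lemma \ref{spherehom} yields independence of only $k-1$ of the $k$ sphere classes; the hypothesis $b_{2}(M)\geq3$ is exactly what guarantees $k\geq3$ there, so that any two of the $\Sigma_{i}$ still sit inside an independent subcollection. For $b_{2}(M)=2$ a single relation $[\Sigma_{1}]=\pm[\Sigma_{2}]$ is possible and $t$ could then swap the two spheres, which is why this bound cannot be lowered.
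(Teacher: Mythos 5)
Your proof is correct and follows essentially the same route as the paper: combine Lemma \ref{euler} (so that $\chi(\mathrm{Fix}(s))=2+b_{2}(M)\geq 6$ forces enough sphere components) with Lemma \ref{spherehom} to conclude that any two spheres in $\mathrm{Fix}(s)$ are homologically independent, whence homological triviality of $t$ prevents any permutation or degree $-1$ restriction. You merely spell out the details (the normalizer argument, the relation $[\Sigma_i]=\epsilon_i[\Sigma_{\sigma(i)}]$, and the purely $2$-dimensional case) that the paper's compressed proof leaves implicit.
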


\begin{proof}
When $b_{2}(M)\geq 3,$ the Euler characteristic $\chi (M^{s})=2+b_{2}(M)\geq
5$ by Lemma \ref{euler}. Any two spheres in $\mathrm{Fix}(s)$ represent
linearly independent elements of $H_{2}(M;F_{p})$ by Lemma \ref{spherehom}.
Therefore, the element $t$ acts trivially on the homology classes and
preserves each sphere component.
\end{proof}

Corollary \ref{cor1} implies that when $b_{2}(M)\geq 3,$ the fixed point set
of an elementary abelian $p$-group acting effectively and homologically
trivially on a simply connected 4-manifold $M$ is a union of discrete points
and pairwise disjoint 2-spheres.

\section{Finite group actions on 4-manifolds\label{sect}}

In this section, we will prove Theorem \ref{main}. We will follow the
strategy of McCooey \cite{mc}. However, there are several places where
McCooey's argument does not directly generalize. When the group action of $G$
on $M$ is locally linear, the group $G$ acts linearly on the `tangent space' 
$T_{p}M$ for any fixed point $p\in \mathrm{Fix}(G).$ This fact is used
repeatedly in the proof of Proposition 4, the applications of Lemma 10 to
restrict the types of fixed points, the local representation on page 845 of 
\cite{mc} and so on. Without the local linearity, the singular set of the
group action could be very complicated.

We will prove that no non-abelian finite group can act effectively,
homologically trivially on a simply connected closed 4-manifold when the
second Betti number is bigger than $2$. The proof is by contradiction.
Suppose that there is such an action. There would be an action of a minimal
non-abelian group $D$ (i.e. any proper subgroup of $D$ is abelian). Let $%
\Sigma $ be the singular set. We will prove that this is impossible by
computing the cohomology groups $H^{\ast }(M_{D})$ and $H^{\ast }(\Sigma
_{D}),$ which are isomorphic by Lemma \ref{bred} in high dimensions.

First, let us recall the knowledge of minimal non-abelian finite groups. A
non-abelian finite group is minimal if any proper subgroup is abelian. The
minimal non-abelian finite groups are classified as follows according to the
rank of their elementary abelian subgroups ($p,q$ are prime numbers, cf. 
\cite{re,ya}, Section 4 of \cite{mc}):

\begin{itemize}
\item In rank 1, we have the groups 
\begin{equation*}
\mathbb{Z}/p\rtimes \mathbb{Z}/q^{n},
\end{equation*}%
(where the semi-direct product automorphism has order $q)$ and the
quaternion group 
\begin{equation*}
D_{2}^{\ast }=\langle a,b\mid a^{4}=1,a^{2}=b^{2},[a,b]=a^{2}\rangle ;
\end{equation*}

\item In rank 2, we have the groups%
\begin{equation*}
(\mathbb{Z}/p\times \mathbb{Z}/p)\rtimes \mathbb{Z}/q^{n},
\end{equation*}%
(where the semi-direct product automorphism $\sigma $ has order $q$ and $%
\sigma $ acts trivially on any proper invariant submodule of $\mathbb{Z}%
/p\times \mathbb{Z}/p$), 
\begin{eqnarray*}
&&\mathbb{Z}/p^{m}\rtimes \mathbb{Z}/p^{n} \\
&=&\langle a,b\mid a^{p^{m}}=b^{p^{n}}=1,[a,b]=a^{p^{m-1}},m\geq 2\rangle
\end{eqnarray*}%
and $(\mathbb{Z}/p\times \mathbb{Z}/p)\rtimes \mathbb{Z}/p.$ In this case,
there is a normal rank-two subgroup.

\item In rank 3 and higher, we will not need the structure.
\end{itemize}

The case of rank-1 groups will be proved in Theorem \ref{main1} (for $%
\mathbb{Z}/p\rtimes \mathbb{Z}/q^{n},$ $n=1$), Theorem \ref{main2} (for $%
\mathbb{Z}/p\rtimes \mathbb{Z}/q^{n},$ $n>1$) and Theorem \ref{main3} (for
the quaternion group $D_{2}^{\ast }$). The case of high-rank groups will be
proved in Theorem \ref{main4}.

In lots of cases, we will need the following information on cohomology
groups.

\begin{lemma}
\label{cohom} The cohomology groups%
\begin{eqnarray*}
H^{i}(\mathbb{Z}/p\rtimes \mathbb{Z}/q^{n};\mathbb{Z}) &\cong &\left\{ 
\begin{array}{c}
\mathbb{Z},\text{if }i=0 \\ 
\mathbb{Z}/q^{n},\text{ if }\mathit{i}\text{ is even but }2\mathit{%
q\nshortmid i} \\ 
\mathbb{Z}/p\bigoplus \mathbb{Z}/q^{n},\text{ if }2q|i \\ 
0,\text{otherwise.}%
\end{array}%
\right. \\
H^{i}(D_{2}^{\ast };\mathbb{Z}) &\cong &\left\{ 
\begin{array}{c}
\mathbb{Z},\text{if }i=0 \\ 
\mathbb{Z}/2\bigoplus \mathbb{Z}/2,\text{ if }i\equiv 2\func{mod}4 \\ 
\mathbb{Z}/8,\text{ if }i\equiv 0\func{mod}4\text{, }i>0 \\ 
0,\text{otherwise.}%
\end{array}%
\right.
\end{eqnarray*}
\end{lemma}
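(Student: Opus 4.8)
The plan is to handle the two groups by different but standard machinery. For the metacyclic group $G=\mathbb{Z}/p\rtimes\mathbb{Z}/q^{n}$ I would run the Lyndon--Hochschild--Serre spectral sequence of the extension $1\to\mathbb{Z}/p\to G\to\mathbb{Z}/q^{n}\to 1$, whose comparatively sparse $E_{2}$-page, combined with the coprimality of $p$ and $q$, should force collapse and split the answer into a $q$-primary and a $p$-primary part. For the quaternion group $D_{2}^{\ast}=Q_{8}$ I would instead use its free linear action on $S^{3}$ to deduce periodicity of period $4$ and then pin down the groups in degrees $1,2,4$.

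Take the presentation $G=\langle a,b\mid a^{p}=1,\,b^{q^{n}}=1,\,bab^{-1}=a^{r}\rangle$, where by the cited classification $r$ has multiplicative order exactly $q$ modulo $p$. The coefficient module in the spectral sequence is $H^{t}(\mathbb{Z}/p;\mathbb{Z})$, equal to $\mathbb{Z}$ for $t=0$, to $\mathbb{Z}/p$ for even $t>0$, and to $0$ for odd $t$; the generator $b$ acts on $H^{2k}(\mathbb{Z}/p;\mathbb{Z})\cong\mathbb{Z}/p$ by multiplication by $r^{k}$, which is trivial precisely when $q\mid k$, i.e. when $2q\mid 2k$. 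Thus on the $E_{2}$-page $E_{2}^{s,t}=H^{s}(\mathbb{Z}/q^{n};H^{t}(\mathbb{Z}/p;\mathbb{Z}))$ the bottom row $t=0$ is the ordinary integral cohomology of the cyclic group $\mathbb{Z}/q^{n}$, namely $\mathbb{Z}$ in degree $0$, $\mathbb{Z}/q^{n}$ in each positive even degree, and $0$ in odd degrees. For even $t>0$ the row is $H^{s}(\mathbb{Z}/q^{n};\mathbb{Z}/p)$ with the twist above; since $\gcd(p,q^{n})=1$ this vanishes for all $s>0$, while for $s=0$ it equals the invariants $(\mathbb{Z}/p)^{\langle b\rangle}$, which is $\mathbb{Z}/p$ when $2q\mid t$ and $0$ otherwise. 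Hence every nonzero entry lies either in the bottom row (pure $q$-torsion) or in the left column $s=0$ at heights $t=2q,4q,\dots$ (pure $p$-torsion).

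The step I expect to be the crux is showing $E_{2}=E_{\infty}$ and that there is no extension problem, but both follow from coprimality. A $p$-torsion class in column $s=0$ can only be the source of a differential, whose target lies in a column $s\geq 2$ where every entry is $q$-torsion; a homomorphism from a $p$-group to a $q$-group is zero, so it vanishes. Dually, the $q$-torsion in the bottom row can only receive differentials originating in rows $t>0$, which are $p$-torsion, so these vanish too. Therefore the sequence collapses, and reading off total degree $i$: odd $i$ contributes nothing (all surviving entries have $s$ and $t$ even), while for even $i>0$ the bottom row gives $\mathbb{Z}/q^{n}$ and the entry $(0,i)$ contributes $\mathbb{Z}/p$ exactly when $2q\mid i$. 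The two summands have coprime order, so the extension splits and we obtain $\mathbb{Z}/q^{n}$ when $2q\nmid i$ and $\mathbb{Z}/p\oplus\mathbb{Z}/q^{n}$ when $2q\mid i$, as claimed.

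For $D_{2}^{\ast}=Q_{8}$, I would embed it in the unit quaternions $S^{3}$, where it acts freely by left multiplication; the quotient is a closed $3$-manifold with fundamental group $Q_{8}$, so $Q_{8}$ has periodic cohomology of period $4$ and the periodicity generator in $H^{4}$ has order $|Q_{8}|=8$, giving $H^{4k}(Q_{8};\mathbb{Z})\cong\mathbb{Z}/8$ for $k\geq 1$. Since the abelianization is $\mathbb{Z}/2\times\mathbb{Z}/2$, we get $H^{1}=\operatorname{Hom}(Q_{8},\mathbb{Z})=0$ and $H^{2}\cong\operatorname{Hom}(Q_{8}^{\mathrm{ab}},\mathbb{Q}/\mathbb{Z})\cong\mathbb{Z}/2\oplus\mathbb{Z}/2$; propagating these by the period-$4$ isomorphism yields $H^{i}\cong\mathbb{Z}/2\oplus\mathbb{Z}/2$ for $i\equiv 2\pmod 4$ and $H^{i}=0$ for odd $i$. (Alternatively this is the standard computation of the integral cohomology of the generalized quaternion groups.)
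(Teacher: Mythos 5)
The paper states this lemma with no proof at all --- it is quoted as a standard computation --- so there is no argument of the author's to compare yours against; the only question is whether your proof stands on its own. Your treatment of $\mathbb{Z}/p\rtimes \mathbb{Z}/q^{n}$ does: the identification of the twisted coefficients (conjugation by $b$ acting on $H^{2k}(\mathbb{Z}/p;\mathbb{Z})\cong \mathbb{Z}/p$ by $r^{k}$, with invariants nonzero exactly when $q\mid k$), the vanishing of $H^{s}(\mathbb{Z}/q^{n};-)$ for $s>0$ on $p$-torsion coefficients by coprimality, the collapse of the Lyndon--Hochschild--Serre spectral sequence because every differential runs between a $p$-torsion group and a $q$-torsion group, and the splitting of the final extension by coprimality of the graded pieces are all correct, and they yield precisely the displayed answer. (One can sanity-check it against the dihedral case $q=2,n=1$ and the dicyclic case $p=3$, $q^{n}=4$.)

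The quaternion half, however, has a gap. The periodicity you extract from the free action on $S^{3}$ is, as you use it, cup product with the Euler class $e\in H^{4}(Q_{8};\mathbb{Z})$, giving isomorphisms $H^{i}\cong H^{i+4}$ for $i\geq 1$; propagating $H^{1}=0$, $H^{2}\cong (\mathbb{Z}/2)^{2}$ and $H^{4}\cong \mathbb{Z}/8$ upward determines all degrees except those $\equiv 3 \bmod 4$. Your assertion that $H^{i}=0$ for \emph{all} odd $i$ therefore silently requires $H^{3}(Q_{8};\mathbb{Z})=0$, which you never prove and which is not formal: for a finite group $H^{3}(G;\mathbb{Z})\cong \mathrm{Ext}(H_{2}(G;\mathbb{Z}),\mathbb{Z})\cong H_{2}(G;\mathbb{Z})$ is the Schur multiplier, so this is a genuine additional input. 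It is easily supplied inside your own framework, in any of three ways: (a) Hopf's theorem on $X=S^{3}/Q_{8}$: since $\pi_{2}(X)=\pi_{2}(S^{3})=0$, one has $H_{2}(Q_{8};\mathbb{Z})\cong H_{2}(X;\mathbb{Z})$, and $H_{2}(X;\mathbb{Z})\cong H^{1}(X;\mathbb{Z})=\mathrm{Hom}(H_{1}(X),\mathbb{Z})=0$ by Poincar\'{e} duality on the closed orientable $X$; (b) the degree-$3$ piece of the Gysin sequence of $S^{3}\rightarrow X\rightarrow BQ_{8}$, namely $0\rightarrow H^{3}(BQ_{8})\rightarrow H^{3}(X)\rightarrow H^{0}(BQ_{8})\overset{\cup e}{\rightarrow }H^{4}(BQ_{8})$, where the image of $H^{3}(X)\cong \mathbb{Z}$ equals $\ker (\cup e)=8\mathbb{Z}$, forcing $H^{3}(X)\rightarrow H^{0}(BQ_{8})$ to be injective and hence $H^{3}(BQ_{8})=0$; or (c) pass to Tate cohomology, where the free action gives $\widehat{H}^{i}\cong \widehat{H}^{i+4}$ for all $i\in \mathbb{Z}$, so that $H^{3}\cong \widehat{H}^{3}\cong \widehat{H}^{-1}(Q_{8};\mathbb{Z})=\ker (|Q_{8}|:\mathbb{Z}\rightarrow \mathbb{Z})=0$. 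With any one of these added, your proof is complete; the parenthetical appeal to "the standard computation" is not a substitute, since that is exactly the fact being proved.
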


\subsection{\noindent \textbf{Singular sets}}

In this subsection, we will study the singular set $\Sigma $ for the action
of the minimal non-abelian group $D=\mathbb{Z}/p\rtimes \mathbb{Z}/q^{n}$ $%
(n\geq 1)$ on a 4-manifold $M.$ Let $\mathbb{Z}/p=\langle s\rangle $ be a
cyclic group of prime order $p$ and $\mathbb{Z}/q^{n}=\langle t\rangle $ a
cyclic group of prime order $q^{n}$ acting on $\mathbb{Z}/p.$ Note that $%
q\mid p-1$ and $tst^{-1}=s^{k}$ for some $1<k<p.$ For example, when $%
q=2,n=1, $ we have that 
\begin{equation*}
D=D_{p}=\mathbb{Z}/p\rtimes \mathbb{Z}/2=\langle
s,t:s^{p}=t^{2}=1,tst^{-1}=s^{-1}\rangle
\end{equation*}%
is the dihedral group.

From the definition that $\Sigma =\{x\in M\mid $the stabilizer $D_{x}\neq
\{e\}\},$ we know that $\Sigma =\cup _{g\in G\backslash \{e\}}\mathrm{Fix}%
(g).$

We consider the case when $n=1$ first. For any integer $i,$ we have $%
ts^{i}t^{-1}=s^{j}$ for some integer $j.$ For any integers $k,0<l<q$, we
have $\langle s^{k}t^{l}\rangle =\langle s^{k^{\prime }}t\rangle $ for some $%
k^{\prime }.$ Therefore, the singular set $\Sigma =\cup \mathrm{Fix}(s)\cup
_{i=0}^{p-1}\mathrm{Fix}(s^{i}t).$ The following lemma gives the global
picture of the singular set.

\begin{lemma}
\label{fix}Let the group $D=\mathbb{Z}/p\rtimes \mathbb{Z}/q$ act
effectively homologically trivially on a closed 4-manifold $M$ with $H_{1}(M;%
\mathbb{Z})=0$ by homeomorphisms. The singular set $\Sigma =\cup \mathrm{Fix}%
(s)\cup _{i=0}^{p-1}\mathrm{Fix}(s^{i}t)$ satisfies the following:

\begin{enumerate}
\item[(1)] each fixed point set $\mathrm{Fix}(s^{i}t)$ or $\mathrm{Fix}(s)$
is a disjoint union of 2-spheres and discrete points. Moreover, the Euler
characteristic $\chi (M)=\chi (\mathrm{Fix}(s))=\chi (\mathrm{Fix}(s^{i}t))$
for each $i=0,...,p-1;$

\item[(2)] the intersection $\mathrm{Fix}(s)\cap \mathrm{Fix}(s^{i}t)$ (or $%
\mathrm{Fix}(s^{i}t)\cap \mathrm{Fix}(s^{j}t),i\neq j$) is a part of the
global fixed point set $\mathrm{Fix}(D)=M^{D};$

\item[(3)] the global fixed point set $M^{D}$ is a disjoint union of
(possibly empty) discrete points, 1-spheres and 2-spheres.
\end{enumerate}
\end{lemma}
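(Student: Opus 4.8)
The plan is to analyze the singular set $\Sigma$ piece by piece, relying on the lemmas already assembled. First I would establish part (1): each of the cyclic subgroups $\langle s\rangle$ and $\langle s^i t\rangle$ is of prime order ($p$ and $q$ respectively), and the group $D$ acts effectively and homologically trivially on $M$, so each element $s^i t$ acts homologically trivially as well. Hence Lemma \ref{first} applies directly to each cyclic subgroup of prime order, giving that $\mathrm{Fix}(s)$ and each $\mathrm{Fix}(s^i t)$ is a disjoint union of $2$-spheres and discrete points. The Euler-characteristic equalities $\chi(M)=\chi(\mathrm{Fix}(s))=\chi(\mathrm{Fix}(s^i t))$ then follow immediately from Lemma \ref{euler}, applied to the prime-order generators $s$ and $s^i t$ in turn.

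For part (2) I would argue that the pairwise intersections of distinct fixed-point sets are forced into the global fixed set $M^D=\mathrm{Fix}(s)\cap\bigcap_i\mathrm{Fix}(s^i t)$. The key observation is the structure of $D=\mathbb{Z}/p\rtimes\mathbb{Z}/q$: any two of the subgroups $\langle s\rangle, \langle t\rangle, \langle s t\rangle,\dots,\langle s^{p-1}t\rangle$ generate the whole group $D$. Concretely, a point fixed by both $s$ and some $s^i t$ is fixed by the subgroup these two generate; since $s$ and $s^i t$ together generate $D$ (using that $\mathbb{Z}/q$ acts nontrivially on $\mathbb{Z}/p$, so the only subgroups containing a conjugate of $t$ and all of $\langle s\rangle$ is $D$ itself), the point lies in $M^D$. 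Similarly, a point fixed by $s^i t$ and $s^j t$ with $i\neq j$ is fixed by their product, which is a nontrivial power of $s$, hence by all of $\langle s\rangle$, and then by $D$. So every such intersection is contained in $M^D$, establishing (2).

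Part (3) would follow by combining the previous two parts. Since $M^D\subseteq\mathrm{Fix}(s)$ and $\mathrm{Fix}(s)$ is a disjoint union of $2$-spheres and discrete points, $M^D$ is a closed subset of a $2$-manifold-plus-points. But $M^D$ is itself the fixed-point set of the finite group $D$ acting on $M$; by Smith theory (Lemma \ref{sm}) applied through a composition chain, or more directly because $M^D$ is an intersection of the homology manifolds $\mathrm{Fix}(g)$, it is again a homology manifold over the relevant coefficients, of dimension at most $2$. By Lemma \ref{tmf} it is a genuine topological manifold, so each component is either a point or a closed $2$-manifold; a closed $2$-dimensional component of $M^D$ sits inside a $2$-sphere component of $\mathrm{Fix}(s)$ as a closed submanifold of the same dimension, forcing it to be an entire sphere. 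Hence $M^D$ is a disjoint union of discrete points and $2$-spheres.

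The step I expect to be the main obstacle is the precise subgroup-generation argument in part (2), where one must verify that the relevant pairs of cyclic subgroups genuinely generate all of $D$, using the effectiveness of the $\mathbb{Z}/q$-action on $\mathbb{Z}/p$; care is needed because the conjugates $s^i t$ are not normal and one must track which products land inside $\langle s\rangle$. A secondary subtlety is confirming in part (3) that a closed $2$-dimensional component of $M^D$ really fills out an entire sphere component of $\mathrm{Fix}(s)$ rather than forming a proper closed subsurface, but this is resolved by the invariance-of-dimension argument since a closed $2$-manifold embedded in a connected $2$-sphere with matching dimension must be the whole sphere.
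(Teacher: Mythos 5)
Your parts (1) and (2) are essentially the paper's own argument: Lemma \ref{first} plus Lemma \ref{euler} give (1), and the fact that any two distinct elements of $\{s,t,st,\dots ,s^{p-1}t\}$ generate all of $D$ gives (2). One small algebraic slip: for $q>2$ the product $(s^{i}t)(s^{j}t)$ is \emph{not} a power of $s$ (it equals $s^{i+jk}t^{2}$ where $tst^{-1}=s^{k}$); you should instead use $(s^{i}t)(s^{j}t)^{-1}=s^{i-j}$, which is a nontrivial power of $s$, so the subgroup fixing the point contains $\langle s\rangle$ and hence $t$, i.e.\ all of $D$. This is cosmetic and easily repaired.

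Part (3), however, has a genuine gap: your argument yields that $M^{D}$ is a homology manifold of dimension \emph{at most} $2$, and you then pass directly to ``each component is a point or a closed $2$-manifold,'' silently excluding dimension $1$. Circles are exactly the dangerous case, and they cannot be excluded by Smith theory or dimension counting alone: Lemma \ref{sm} gives even codimension only for odd primes, and for $q=2$ an involution $t$ can perfectly well fix a circle inside a $2$-sphere of $\mathrm{Fix}(s)$. Indeed, on $S^{2}\times S^{2}$ let $s$ rotate the first factor about the $z$-axis and let $t$ act by a reflection of the first factor in a plane containing that axis times a reflection of the second factor; then $\langle s,t\rangle \cong D_{p}$, the action is orientation-preserving, and $M^{D}$ consists of two \emph{circles}. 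This example is only ruled out because it is homologically nontrivial, which is precisely the ingredient the paper uses and your proof never invokes: by Corollary \ref{cor1} (this is where $b_{2}(M)\geq 3$ enters), $t$ preserves each $2$-sphere of $\mathrm{Fix}(s)$ and acts homologically trivially, hence orientation-preservingly, on it, so by Lemma \ref{Bredon} its fixed set in that sphere has even codimension and is therefore discrete or the whole sphere; this is what eliminates the case $r=1$ and proves (3). A secondary flaw: your claim that $M^{D}$ is a homology manifold ``because it is an intersection of homology manifolds'' is not a valid principle in general; the correct route is the composition chain $M^{D}=\bigl(\mathrm{Fix}(s)\bigr)^{\langle t\rangle}$, where $\mathrm{Fix}(s)$ is already a genuine $2$-manifold-plus-points by (1) and Lemma \ref{tmf}, and then Smith theory applies to the $\mathbb{Z}/q$-action on it.
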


\begin{proof}
Since each element of $\{s^{i}t\mid i=0,\ldots ,p-1\}$ is of prime order $q$
and the group action is orientation-preserving, it follows Lemma \ref{first}
that the fixed point set is a disjoint union of 2-spheres and discrete
points. The equalities of Euler characteristics are classical (cf. Lemma \ref%
{euler}).

Since any two distinct elements in $\{s,s^{i}t\mid i=0,\ldots ,p-1\}$
generate the whole group $D,$ the claim (2) is proved. Each component of $%
M^{D}$ comes from an action of $t$ on $\mathrm{Fix}(s),$ which is discrete
or an $r$-sphere $(r=0,1,$ or $2).$ This proves (3).
\end{proof}

From (1) of the previous lemma, a connected component in $\mathrm{Fix}(t)$
or $\mathrm{Fix}(s)$ is either a discrete point or a $2$-sphere. A connected
component in the intersection $\mathrm{Fix}(t)\cap \mathrm{Fix}(s)$ can be
either a discrete point or a whole 2-sphere in the singular set $\Sigma .$
The following result implies that two distinct 2-spheres in $\mathrm{Fix}%
(t)\cup \mathrm{Fix}(s)$ cannot have a non-trivial intersection.

\begin{theorem}
\label{n=1}Let the group $D=\mathbb{Z}/p\rtimes \mathbb{Z}/q$ act
effectively homologically trivially on a closed 4-manifold $M$ with $H_{1}(M;%
\mathbb{Z})=0$ by homeomorphisms. Suppose that a discrete point $x\in 
\mathrm{Fix}(D)$ is contained in a non-discrete component of $\mathrm{Fix}(%
\mathbb{Z}/q)$. Then $x$ is a discrete point in $\mathrm{Fix}(\mathbb{Z}/p).$
\end{theorem}

\begin{proof}
It's already known that $\mathrm{Fix}(\mathbb{Z}/p)$ is a union of $2$%
-spheres and discrete points. Suppose that $x\in S^{2}\subset \mathrm{Fix}(%
\mathbb{Z}/q).$ Suppose that $tst^{-1}=s^{k}$ for some $1<k<p.$ Then $%
s^{i}S^{2}=Fix(s^{i}ts^{-i})=Fix(s^{i-k}t)$ for each $1\leq i\leq p-1.$ When 
$x$ is discrete in $\mathrm{Fix}(D),$ we know that $\cap
_{i=1}^{p-1}s^{i}S^{2}=\{x\}.$ Let $U$ be a $D$-invariant open small ball
containing $x$ and $\Sigma ^{\prime }=\Sigma \cap U.$ Then we have $%
H_{c}^{k}(U_{D})=H_{c}^{k}(\Sigma _{G}^{\prime })$ when $k>4$ by Lemma \ref%
{bred} (where $H_{c}^{k}(-)$ is the cohomology group with compact supports).
Note that $H_{c}^{k}(U)=\mathbb{Z}$ when $k=4$ and $0,$ when $k\neq 4.$
Moreover, $H_{c}^{k}(\Sigma ^{\prime })=\lim\limits_{\rightarrow
}H^{k}(\Sigma ^{\prime },\Sigma ^{\prime }-K)$ (the direct limit of the
relative singular cohomology group with $K$ ranges over compact subsets).

Suppose that $x$ belongs to a $2$-sphere $S_{p}^{2}$ component of $\mathrm{%
Fix}(\mathbb{Z}/p).$ Then $\Sigma ^{\prime }$ is a union of $p+1$ 2-discs
with a common intersection point $x.$ Note that $\mathbb{Z}/q$ acts
invariantly on the 2-sphere $S_{p}^{2}.$ Therefore, $H_{c}^{2}(\Sigma
^{\prime })=\mathbb{Z}^{p+1}$ and $H_{c}^{1}(\Sigma ^{\prime })=\mathbb{Z}%
^{p}$ (viewed as the $D$-module $\mathbb{Z}[D]\bigotimes_{\mathbb{Z}[\mathbb{%
Z}/q]}\mathbb{Z}$) and $H_{c}^{0}(\Sigma ^{\prime })=0.$

When $q=2,$ we have the following 
\begin{eqnarray*}
H^{6}(D;H_{c}^{2}(\Sigma ^{\prime })) &\cong &H^{6}(D;\mathbb{Z)}\bigoplus
H^{6}(\mathbb{Z}/q;\mathbb{Z)} \\
&\cong &\mathbb{Z}/q\bigoplus \mathbb{Z}/q,
\end{eqnarray*}%
\begin{eqnarray*}
H^{7}(D;H_{c}^{1}(\Sigma ^{\prime })) &\cong &H^{7}(D;\mathbb{Z}%
[D]\bigotimes\nolimits_{\mathbb{Z}[\mathbb{Z}/q]}\mathbb{Z}) \\
&\cong &H^{7}(\mathbb{Z}/q;\mathbb{Z})=0.
\end{eqnarray*}%
Therefore, the Borel spectral sequence $H^{i}(D;H_{c}^{j}(\Sigma ^{\prime
}))\Longrightarrow H_{c}^{i+j}(\Sigma _{G}^{\prime })$ (cf. \cite{br},
Theorem 9.5, page 251) implies that $H^{8}(\Sigma _{G}^{\prime })$ is
isomorphic to a quotient group of $H^{6}(D;H_{c}^{2}(\Sigma ^{\prime
}))\cong \mathbb{Z}/q\bigoplus \mathbb{Z}/q.$ However, 
\begin{equation*}
H^{8}(U_{G})\cong H^{4}(D;H_{c}^{4}(U))\cong \mathbb{Z}/p\bigoplus \mathbb{Z}%
/2
\end{equation*}
when $q=2$, which is not isomorphic to $H^{8}(\Sigma _{G}^{\prime })$
considering the $p$-part.

When $q>2,$ we have 
\begin{eqnarray*}
H^{2q+2}(D;H_{c}^{2}(\Sigma ^{\prime })) &\cong &H^{2q+2}(D;\mathbb{Z)}%
\bigoplus H^{2q+2}(\mathbb{Z}/q;\mathbb{Z)} \\
&\cong &\mathbb{Z}/q\bigoplus \mathbb{Z}/q
\end{eqnarray*}%
and the Borel spectral sequence gives that $H_{c}^{2q+4}(\Sigma _{G}^{\prime
})$ is isomorphic to a quotient group of $\mathbb{Z}/q\bigoplus \mathbb{Z}%
/q. $ However, 
\begin{equation*}
H_{c}^{2q+4}(U_{G})\cong H^{2q}(D;H_{c}^{4}(U))\cong \mathbb{Z}/p\bigoplus 
\mathbb{Z}/q,
\end{equation*}%
which is not isomorphic to $H_{c}^{2q+4}(\Sigma _{G}^{\prime })$ considering
the $p$-part. This proves that $x$ is a discrete component in $\mathrm{Fix}(%
\mathbb{Z}/p).$
\end{proof}

We now consider the case when $n>1.$ Since the $p$-th power $%
(s^{i}t^{q^{j}})^{p}=t^{pq^{j}}$ for each $i$ and $j>0$ and $%
(s^{i}t)^{q}=t^{q},$ we see that the fixed point set $\mathrm{Fix}%
(s^{i}t^{q^{j}})\subset \mathrm{Fix}(t^{q^{n-1}}),$ which is also a disjoint
union of discrete points and 2-dimensional spheres. Therefore, the singular $%
\Sigma =\mathrm{Fix}(s)\cup \mathrm{Fix}(t^{q^{n-1}})$ is a union of spheres
and discrete points.

\begin{theorem}
\label{n>2}Let the group $D=\mathbb{Z}/p\rtimes \mathbb{Z}/q^{n}$ $(n>1)$
act effectively homologically trivially on a closed 4-manifold $M$ with $%
H_{1}(M;\mathbb{Z})=0$ by homeomorphisms. Suppose that a discrete point $%
x\in \mathrm{Fix}(D)$ is contained in a non-discrete component of $\mathrm{%
Fix}(t^{q^{n-1}})$, then $x$ is a discrete point in $\mathrm{Fix}(\mathbb{Z}%
/p).$
\end{theorem}

\begin{proof}
Suppose that $x\in S_{1}^{2}\cap S_{2}^{2}$ for a 2-sphere $S_{1}^{2}\subset 
\mathrm{Fix}(t^{q^{n-1}})$ and another 2-sphere $S_{2}^{2}\subset \mathrm{Fix%
}(\mathbb{Z}/p).$ Since $s$ commutes with $t^{q^{n-1}},$ we have $%
sS_{2}^{2}=S_{2}^{2}$ as a set. The quotient group $D/\langle
t^{q^{n-1}}\rangle \cong \mathbb{Z}/p\rtimes \mathbb{Z}/q$ acts invariantly
on $\mathrm{Fix}(t^{q^{n-1}}).$ Let $U$ be an $D$-invariant open small ball
containing $x$ and $\Sigma ^{\prime }=\Sigma \cap U.$ Note that $%
H_{c}^{4}(U)=\mathbb{Z}$, $H_{c}^{2}(\Sigma ^{\prime })=\mathbb{Z}^{2}$ and $%
H_{c}^{1}(\Sigma ^{\prime })=\mathbb{Z},H_{c}^{0}(\Sigma ^{\prime })=0$,
with trivial $D$-actions.

When $q=2$, we have 
\begin{eqnarray*}
H^{8}(U_{G}) &\cong &H^{4}(D;H_{c}^{4}(U))\cong \mathbb{Z}/p\tbigoplus 
\mathbb{Z}/2, \\
H^{6}(D;H_{c}^{2}(\Sigma ^{\prime })) &\cong &H^{6}(D;\mathbb{Z}^{2})\cong (%
\mathbb{Z}/q^{n})^{2}, \\
H^{7}(D;H_{c}^{1}(\Sigma ^{\prime })) &=&0.
\end{eqnarray*}%
The Borel spectral sequence $H^{i}(D;H_{c}^{j}(\Sigma ^{\prime
}))\Longrightarrow H_{c}^{i+j}(\Sigma _{G}^{\prime })$ implies that $%
H^{8}(\Sigma _{G}^{\prime })$ is isomorphic to a quotient group of $(\mathbb{%
Z}/q^{n})^{2},$ which has a trivial $p$-part. This is a contradiction to
that $H^{8}(\Sigma _{G}^{\prime })\cong H^{8}(U_{G})$.

When $q>2,$ we have 
\begin{eqnarray*}
H^{2q+4}(U_{G}) &\cong &H^{2q}(D;H_{c}^{4}(U))\cong \mathbb{Z}/p\tbigoplus 
\mathbb{Z}/q^{n}, \\
H^{2q+2}(D;H_{c}^{2}(\Sigma ^{\prime })) &\cong &H^{2q+2}(D;\mathbb{Z}%
^{2})\cong (\mathbb{Z}/q^{n})^{2}, \\
H^{2q+3}(D;H_{c}^{1}(\Sigma ^{\prime })) &=&0.
\end{eqnarray*}%
The Borel spectral sequence again implies that $H^{2q+4}(\Sigma _{G}^{\prime
})$ is isomorphic to a quotient of $(\mathbb{Z}/q^{n})^{2}$ and thus not
isomorphic to $H^{2q+4}(U_{G}),$ considering the $p$-part. This is a
contradiction.
\end{proof}

\subsection{Group actions of $\mathbb{Z}/p\rtimes \mathbb{Z}/q$}

In this subsection, we will prove the following.

\begin{theorem}
\label{main1}Let $p,q$ be two primes and $D=\mathbb{Z}/p\rtimes \mathbb{Z}/q$
a nontrivial semi-direct product. Suppose that $M$ is a 4-dimensional
manifold with $H_{1}(M;\mathbb{Z})=0$ and the second Betti number $%
b_{2}(M)\geq 3.$ The group $D$ cannot act effectively homologically
trivially on $M$ by homeomorphisms.
\end{theorem}

The proof of this theorem is based on a detailed study of singular sets and
applications of Borel spectral sequences.

\begin{lemma}
\label{twosphere}Two distinct 2-spheres $S_{1}^{2},S_{2}^{2}\subset \mathrm{%
Fix}(t)$ have disjoint orbits under the action of $\mathbb{Z}/p=\langle
s\rangle .$
\end{lemma}

\begin{proof}
Suppose that $s^{i}S_{1}^{2}\cap s^{j}S_{2}^{2}\neq \emptyset $ for some $%
0\leq i\neq j<p.$ Then $S_{1}^{2}\cap s^{j-i}S_{2}^{2}\neq \emptyset $ and
there is a point $x\in S_{1}^{2}\cap s^{j-i}S_{2}^{2}.$ Note that $tx=x$ and 
$s^{j-i}ts^{i-j}x=x.$ Therefore, $x\in \mathrm{Fix}(D),$ a global fixed
point. However, $x=s^{i-j}x\in S_{2}^{2}$ implies $S_{1}^{2}\cap
S_{2}^{2}\neq \emptyset ,$ which is a contradiction to the fact that each
component of $\mathrm{Fix}(t)$ is a manifold (see Lemma \ref{fix}).
\end{proof}

From the previous subsection, we know that $\Sigma =\cup \mathrm{Fix}(s)\cup
_{i=0}^{p-1}\mathrm{Fix}(s^{i}t).$ Moreover, the 2-spheres in $\mathrm{Fix}%
(s)$ and $\mathrm{Fix}(t)$ can only be disjoint or identical by Theorem \ref%
{n=1} (the intersection of the two spheres cannot be a circle, since $t$
acts homologically trivial on $\mathrm{Fix}(s)$). By Lemma \ref{twosphere},
two 2-spheres in $\cup _{i=0}^{p-1}\mathrm{Fix}(s^{i}t)$ from distinct $%
\langle s\rangle $-orbits are disjoint. Therefore, a connected component $X$
of $\Sigma $ is either a isolated point, or a single 2-sphere, or a union of 
$p$ 2-spheres (from $\cup _{i=0}^{p-1}\mathrm{Fix}(s^{i}t)$) having
non-empty intersections. The remaining part of the proof of Theorem \ref%
{main1} will be the similar to that of Proposition 13 in McCooey \cite{mc}.

Suppose that the singular set $\Sigma $ has $k$ isolated global fixed
points, $l$ connected components of single 2-spheres in $\mathrm{Fix}(D)$
and $n_{1}$ connected components of unions of $p$ 2-spheres with non-empty
intersections. Suppose that the rest of $\mathrm{Fix}(s)$ has $n_{2}$ free $%
\langle t\rangle $-orbits of 2-spheres, and $n_{3}$ free $\langle t\rangle $%
-orbits of isolated points. Moreover, the rest of $\mathrm{Fix}(t)$ contains 
$n_{4}$ free $\langle s\rangle $-orbits of $2$-spheres and $n_{5}$ free $%
\langle s\rangle $-orbits of isolated points.

The following was first essentially obtained by McCooey \cite{mc} (page 847).

\begin{lemma}
\label{mccolap}Let $\Sigma ^{\prime }\subset \Sigma $ be the union of the
connected components which are not isolated points or single 2-spheres in $%
\mathrm{Fix}(D).$ Suppose that $\Sigma ^{\prime }$ has $n_{1}$ connected
components consisting of unions of $p$ 2-spheres intersecting at $m_{i}\geq
1 $ points. The Borel spectral sequence for $H^{\ast }(\Sigma _{D}^{\prime
};Z) $ collapses. Actually, we have%
\begin{eqnarray*}
H^{n}(D;H^{0}(\Sigma ^{\prime })) &\cong &\left\{ 
\begin{array}{c}
0,\text{if }n\text{ is odd,} \\ 
(\mathbb{Z}/q)^{n_{1}+n_{4}+n_{5}}\bigoplus (\mathbb{Z}/p)^{n_{2}+n_{3}},%
\text{ if }\mathit{i}\text{ is even but }2\mathit{q\nshortmid n} \\ 
(\mathbb{Z}/q)^{n_{1}+n_{4}+n_{5}}\bigoplus (\mathbb{Z}%
/p)^{n_{1}+n_{2}+n_{3}},\text{ if }2q|n.%
\end{array}%
\right. \\
H^{n}(D;H^{1}(\Sigma ^{\prime })) &\cong &\left\{ 
\begin{array}{c}
\bigoplus_{i=1}^{n_{i}}(\mathbb{Z}/p)^{m_{i}-1},\text{if }n\equiv 2q-1\func{%
mod}2q\text{,} \\ 
0,\text{ otherwise}\mathit{.}%
\end{array}%
\right. \\
H^{n}(D;H^{2}(\Sigma ^{\prime })) &\cong &\left\{ 
\begin{array}{c}
0,\text{if }n\text{ is odd,} \\ 
(\mathbb{Z}/q)^{n_{1}+n_{4}}\bigoplus (\mathbb{Z}/p)^{n_{2}},\text{ if }n%
\text{ is even}\mathit{.}%
\end{array}%
\right.
\end{eqnarray*}
\end{lemma}

\begin{proof}
When $q=2,$ this is proved by McCooey \cite{mc}. For $q>2,$ the proof is
similar. For completeness, we repeat it here. Let $X$ be a union of $p$
2-spheres intersecting at $m_{i}\geq 1$ points $x_{1},x_{2},...,x_{m_{i}}.$
The $D$-equivariant chain complex for $X$ is of the form%
\begin{equation*}
0\rightarrow \mathbb{Z}D\tbigotimes\nolimits_{\langle t\rangle }\mathbb{Z}%
\rightarrow (\mathbb{Z}D\tbigotimes\nolimits_{\langle t\rangle }\mathbb{Z}%
)^{m_{i}-1}\rightarrow \mathbb{Z}^{m_{i}}\rightarrow 0.
\end{equation*}%
Let $Ck(\mathbb{Z})=$coker$(\mathbb{Z\rightarrow }\mathrm{Hom}_{\mathbb{Z}[%
\mathbb{Z}/q]}(\mathbb{Z}[D],\mathbb{Z})),$ i.e.%
\begin{equation*}
0\rightarrow \mathbb{Z}\rightarrow \mathrm{Hom}_{\mathbb{Z}[\mathbb{Z}/q]}(%
\mathbb{Z}[D],\mathbb{Z})\rightarrow Ck(\mathbb{Z})\rightarrow 0.
\end{equation*}%
The Shapiro's Lemma implies that $H^{i}(D;\mathrm{Hom}_{\mathbb{Z}[\mathbb{Z}%
/q]}(\mathbb{Z}[D],\mathbb{Z}))\cong H^{i}(\mathbb{Z}/q;\mathbb{Z})$ and the
long exact sequence gives 
\begin{equation*}
\cdots \rightarrow H^{i}(D;\mathbb{Z})\rightarrow H^{i}(\mathbb{Z}/q;\mathbb{%
Z})\rightarrow H^{i}(D;Ck(\mathbb{Z}))\rightarrow H^{i+1}(D;\mathbb{Z}%
)\rightarrow \cdots .
\end{equation*}%
Therefore, 
\begin{equation*}
H^{n}(D;Ck(\mathbb{Z}))\cong \left\{ 
\begin{array}{c}
\mathbb{Z}/p,\text{if }n\equiv 2q-1\func{mod}2q\text{,} \\ 
0,\text{ otherwise}\mathit{.}%
\end{array}%
\right.
\end{equation*}%
The $D$-equivariant chain complex for $X$ gives 
\begin{eqnarray*}
H^{0}(X;\mathbb{Z}) &=&\mathbb{Z}, \\
H^{1}(X;\mathbb{Z}) &=&(Ck(\mathbb{Z}))^{m_{i}-1}, \\
H^{2}(X;\mathbb{Z}) &=&\mathrm{Hom}_{\mathbb{Z}[\mathbb{Z}/q]}(\mathbb{Z}[D],%
\mathbb{Z}).
\end{eqnarray*}%
The cohomology groups of $\Sigma ^{\prime }$ are%
\begin{eqnarray*}
H^{0}(\Sigma ^{\prime };\mathbb{Z}) &\cong &\mathbb{Z}^{n_{1}}\tbigoplus 
\mathrm{Hom}_{\mathbb{Z}[\mathbb{Z}/p]}(\mathbb{Z}[D],\mathbb{Z}%
)^{n_{2}+n_{3}}\tbigoplus \mathrm{Hom}_{\mathbb{Z}[\mathbb{Z}/q]}(\mathbb{Z}%
[D],\mathbb{Z})^{n_{4}+n_{5}}, \\
H^{1}(\Sigma ^{\prime };\mathbb{Z}) &\cong &\tbigoplus_{i=1}^{n}(Ck(\mathbb{Z%
}))^{m_{i}-1}, \\
H^{2}(\Sigma ^{\prime };\mathbb{Z}) &\cong &\mathrm{Hom}_{\mathbb{Z}[\mathbb{%
Z}/p]}(\mathbb{Z}[D],\mathbb{Z})^{n_{2}}\tbigoplus \mathrm{Hom}_{\mathbb{Z}[%
\mathbb{Z}/q]}(\mathbb{Z}[D],\mathbb{Z})^{n_{1}+n_{4}}.
\end{eqnarray*}%
The Shapiro's lemma gives the required isomorphisms of the lemma. Since the $%
E_{2}^{i,2}=H^{i}(D;H^{2}(\Sigma ^{\prime };\mathbb{Z}))$ and $E_{2}^{i+2,0}$
vanish in odd dimensions and $E_{2}^{i,1}$ vanishes in even dimensions, the
Borel spectral sequence collapses.
\end{proof}

\bigskip

\begin{proof}[Proof of Theorem \protect\ref{main1}]
Suppose that $D$ acts effectively homologically trivially on $M.$ Since the
odd-dimensional cohomology groups of $D$ vanish (cf. Lemma \ref{cohom}), the
Borel spectral sequence collapses by Lemma \ref{collp}. Therefore, the
graded module (cf. \cite{tom}, Proposition 1.18 of Chapter III, page 182) 
\begin{equation}
\mathrm{GR}(H^{i}(M_{D}))\cong H^{i}(D;\mathbb{Z})\tbigoplus
H^{i-2}(D;H^{2}(M;\mathbb{Z}))\tbigoplus H^{i-4}(D;\mathbb{Z}).  \tag{*}
\end{equation}%
which is isomorphic to $\mathrm{GR}(H^{i}(\Sigma _{D}))$ when $i>4$ by Lemma %
\ref{Bredon}. We will prove that this is impossible by comparing $%
H^{4q}(M_{D})$ and $H^{4q}(\Sigma _{D}).$

Note that the singular set $\Sigma =\Sigma ^{1}\cup \Sigma ^{2}\cup \Sigma
^{\prime },$ is a disjoint union of $\Sigma ^{1}$ consisting of $k$ isolated
global fixed points, $\Sigma ^{2}$ consisting of $l$ components of single
2-spheres in $\mathrm{Fix}(D),$ and $\Sigma ^{\prime }$ the remaining part.
Therefore,%
\begin{equation*}
H^{n}(\Sigma _{D};\mathbb{Z})\cong H^{n}(\Sigma _{D}^{\prime };\mathbb{Z}%
)\bigoplus H^{n}(\Sigma _{D}^{1};\mathbb{Z})\bigoplus H^{n}(\Sigma _{D}^{2};%
\mathbb{Z}).
\end{equation*}%
Lemma \ref{collp} implies that the Borel spectral sequences for $\Sigma
_{D}^{1}\rightarrow BD$ and $\Sigma _{D}^{2}\rightarrow BD$ collapse. By
Lemma \ref{mccolap}, we have 
\begin{eqnarray*}
&&H^{4q}(G;H^{0}(\Sigma ;\mathbb{Z)})\tbigoplus H^{4q-1}(G;H^{1}(\Sigma ;%
\mathbb{Z)})\tbigoplus H^{4q-2}(G;H^{2}(\Sigma ;\mathbb{Z)}) \\
&\cong &(\mathbb{Z}/pq)^{k+l+n_{1}}\tbigoplus ((\mathbb{Z}%
/p)^{n_{2}+n_{3}}\tbigoplus (\mathbb{Z}/q)^{n_{4}+n_{5}})\tbigoplus (\mathbb{%
Z}/p)^{\tsum (m_{i}-1)} \\
&&\tbigoplus (\mathbb{Z}/q)^{n_{1}+n_{4}}\tbigoplus (\mathbb{Z}%
/p)^{n_{2}}\tbigoplus (\mathbb{Z}/q)^{l}.
\end{eqnarray*}%
Since 
\begin{equation*}
H^{4q}(M_{G})\cong \mathbb{Z}/pq\tbigoplus (\mathbb{Z}/q)^{b_{2}(M)}%
\tbigoplus \mathbb{Z}/pq
\end{equation*}%
when $q=2$ (or $\mathbb{Z}/pq\tbigoplus (\mathbb{Z}/q)^{b_{2}(M)}$ when $q>2$%
), we have that 
\begin{equation*}
k+l+n_{1}+2n_{2}+n_{3}+\tsum (m_{i}-1)=2
\end{equation*}%
by considering the $p$-part. When $n_{1}=0,$ we have that $\Sigma ^{\prime }$
is empty and $k+l+2n_{2}+n_{3}=2$. Since $2+b_{2}(M)=\chi
(Fix(s))=k+2l+4n_{2}+2n_{3}\leq 4,$ we get that $b_{2}(M)\leq 2.$ When $%
n_{1}=1,$ we have that $n_{2}=0$ and $k+l+n_{3}+(m_{1}-1)\leq 1.$ Once
again, $2+b_{2}(M)=\chi (Fix(s))=k+2l+2n_{3}+m_{1}\leq 4$ and thus $%
b_{2}(M)\leq 2.$ When $n_{1}=2,$ we have $k+l+2n_{2}+n_{3}+\tsum (m_{i}-1)=0$
and thus $2+b_{2}(M)=\chi (Fix(s))\leq 4,$ implying $b_{2}(M)\leq 2.$ These
are contradictions to $b_{2}(M)\geq 3.$

When $q>2,$ we have that%
\begin{equation*}
k+l+n_{1}+2n_{2}+n_{3}+\tsum (m_{i}-1)=1
\end{equation*}%
by considering the $p$-part. Similar argument shows that $b_{2}(M)\leq 2,$ a
contradiction.
\end{proof}

\bigskip

\subsection{Group actions of $\mathbb{Z}/p\rtimes \mathbb{Z}/q^{n},n>1$}

In this subsection, we study the group action of $\mathbb{Z}/p\rtimes 
\mathbb{Z}/q^{n},n>1$ and prove the following result. We still assume $s$ is
a generator of $\mathbb{Z}/p$ and $t$ is a generator of $\mathbb{Z}/q^{n}.$

\begin{theorem}
\label{main2}Let $p,q$ be two primes and $D=\mathbb{Z}/p\rtimes \mathbb{Z}%
/q^{n}$ $(n>1)$ a minimal non-abelian finite group. Suppose that $M$ is a
four-dimensional closed manifold with $H_{1}(M;\mathbb{Z})=0$ and the the
second Betti number $b_{2}(M)\geq 3.$ The group $D$ cannot act effectively
homologically trivially on $M$ by homeomorphisms.
\end{theorem}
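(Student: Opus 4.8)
The plan is to run the proof of Theorem \ref{main1} $p$-locally, since $D=\mathbb{Z}/p\rtimes\mathbb{Z}/q^{n}$ has exactly the same $p$-primary cohomology as in the rank-one case: by Lemma \ref{cohom}, $H^{i}(D;\mathbb{Z})_{(p)}$ is $\mathbb{Z}/p$ when $2q\mid i$ and vanishes otherwise. First I would record the structure forced by minimal non-abelianness: $\langle t^{q}\rangle$ is central, $t$ acts on $\langle s\rangle$ with order exactly $q$ (so $q\mid p-1$), $\langle s\rangle$ is the unique Sylow $p$-subgroup, and the unique subgroup of order $q$, namely $\langle u\rangle$ with $u=t^{q^{n-1}}$, is central (this centrality is where $n>1$ is used, since $q^{n-1}$ is then a multiple of $q$). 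Consequently every point with a $p$-element in its stabilizer lies in $\mathrm{Fix}(s)$ and every point with a $q$-element in its stabilizer lies in $\mathrm{Fix}(u)$, so the singular set splits as $\Sigma=\mathrm{Fix}(s)\cup\mathrm{Fix}(u)$ with both pieces $D$-invariant. This is cleaner than the rank-one case, where the order-$q$ elements were spread over conjugate Sylow subgroups.

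Next I would analyze $\mathrm{Fix}(s)$. By Lemma \ref{first} it is a disjoint union of $n_{2}$ two-spheres and finitely many points; the quotient $W=D/\langle s\rangle\cong\mathbb{Z}/q^{n}$ acts on it, preserving each sphere and acting homologically trivially on it by Corollary \ref{cor1} (this is where $b_{2}(M)\geq 3$ enters), and permuting the discrete points in orbits of size $q^{j}$ with $0\leq j\leq n$. Writing $m$ for the number of point-orbits of size at least $q$ and $k_{2}$ for the number of global fixed points, Lemma \ref{euler} gives $2+b_{2}(M)=\chi(\mathrm{Fix}(s))$, which is $2n_{2}$ plus the total number of discrete points; since each non-trivial point-orbit contributes $q^{j}\geq q$ points, this is at least $2n_{2}+qm+k_{2}$.

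The analytic heart is the localization argument of Theorem \ref{main1}. After checking that the Borel spectral sequence collapses $p$-locally (Lemma \ref{collp}, Corollary \ref{pcollap}) and invoking Lemma \ref{bred} to identify $H^{i}(M_{G})_{(p)}\cong H^{i}(\Sigma_{G})_{(p)}$ for $i>4$, formula (*) shows that $H^{2q+2}(M_{G})_{(p)}\cong(\mathbb{Z}/p)^{b_{2}(M)}$, since in degree $2q+2$ only the middle term $H^{2q}(G;H^{2}M)_{(p)}$ survives. On the singular-set side I would compute the same group from $\Sigma$: $p$-locally only $\mathrm{Fix}(s)$ contributes, because components meeting $\mathrm{Fix}(u)$ but not $\mathrm{Fix}(s)$ have $q$-group stabilizers, while the $D$-invariant (mixed) components contribute through $H^{2q+2}(D)_{(p)}=0$. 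The two-spheres contribute $(\mathbb{Z}/p)^{n_{2}}$ through $H^{2q}(D;H^{2}\mathrm{Fix}(s))_{(p)}$, and a point-orbit of size $q^{j}$ with $j\geq 1$ has stabilizer $\langle s\rangle\times\mathbb{Z}/q^{n-j}$, contributing one copy of $H^{2q+2}(\langle s\rangle)_{(p)}=\mathbb{Z}/p$ by Shapiro's lemma, whereas global fixed points (stabilizer $D$) contribute nothing. This yields $b_{2}(M)=n_{2}+m$. Combining with the Euler-characteristic inequality gives $2\geq n_{2}+(q-1)m+k_{2}\geq n_{2}+m=b_{2}(M)$, contradicting $b_{2}(M)\geq 3$.

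The step I expect to be the main obstacle is the $p$-local cohomology computation for the point-orbits, where one must control the $\mathbb{Z}/q^{n}$-representation theory on $H^{\ast}(B\langle s\rangle;\mathbb{Z})$. The decisive point is that minimal non-abelianness forces $t$ to act on $H^{2}(B\langle s\rangle)$ with order exactly $q$, so the extra $q$-power divisibility of $|W|$ is invisible $p$-locally and the $p$-torsion of $H^{\ast}(D)$ stays concentrated in degrees divisible by $2q$; the genuinely new feature is only that non-trivial orbits now have size $q^{j}$ rather than $q$, which is exactly what keeps their Euler-characteristic weight at least $q$. One should also verify that no contribution is lost or double-counted when passing between the localization picture (the $W$-action on $H^{\ast}(\mathrm{Fix}(s))\otimes H^{\ast}(B\langle s\rangle)$) and the spectral-sequence picture (the $D$-module $H^{\ast}(\Sigma)$), and confirm that the mixed components indeed carry stabilizer $D$ so that they drop out in degree $2q+2$.
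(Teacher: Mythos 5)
Your overall plan follows the paper quite closely (same decomposition $\Sigma=\mathrm{Fix}(s)\cup\mathrm{Fix}(u)$ with $u=t^{q^{n-1}}$ central, same $p$-local collapse and comparison $H^{2q+2}(M_G)_{(p)}\cong H^{2q+2}(\Sigma_G)_{(p)}\cong(\mathbb{Z}/p)^{b_2(M)}$, same Euler-characteristic finish), but there is a genuine gap at exactly the step you flagged for verification: the claim that the $D$-invariant \emph{mixed} components contribute through $H^{2q+2}(D)_{(p)}=0$ and hence drop out. Having stabilizer $D$ only kills the bottom-row term $E_2^{2q+2,0}=H^{2q+2}(D;H^{0}(X))_{(p)}$ of the Borel spectral sequence of such a component $X$. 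But $X$ contains $2$-spheres --- in particular every sphere of $\mathrm{Fix}(u)$ that meets $\mathrm{Fix}(s)$ --- and by Corollary \ref{cor1} (applied to the central element $u$, using $b_2(M)\geq 3$) each such sphere is $D$-invariant with trivial $D$-action on its $H^{2}$. Hence $E_2^{2q,2}=H^{2q}(D;H^{2}(X))_{(p)}\cong(\mathbb{Z}/p)^{c(X)}$, where $c(X)$ is the number of spheres in $X$, and this term survives $p$-locally (its potential differentials land in $H^{2q+2}(D;H^{1}(X))_{(p)}$ and $H^{2q+3}(D;H^{0}(X))_{(p)}$, both zero). So mixed components do contribute, your identity $b_2(M)=n_2+m$ undercounts by the number of $\mathrm{Fix}(u)$-spheres lying in components that meet $\mathrm{Fix}(s)$, and the concluding inequality no longer follows. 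The failure is not hypothetical: a component which is a closed loop of $2a$ spheres lying alternately in $\mathrm{Fix}(s)$ and $\mathrm{Fix}(u)$ contributes $2a$ to the corrected count but also exactly $2a$ to $\chi(\mathrm{Fix}(s))$ (the $s$-fixed points of the $\mathrm{Fix}(u)$-spheres are the intersection points, already counted on the $\mathrm{Fix}(s)$-spheres), so a configuration consisting of one such loop plus two disjoint $\mathrm{Fix}(s)$-spheres is consistent with your corrected count and with $\chi(\mathrm{Fix}(s))=2+b_2(M)$, yet has $b_2(M)=2a+2$ arbitrarily large. Nor may one simply assume $\mathrm{Fix}(s)\cap\mathrm{Fix}(u)=\emptyset$: for $(\mathbb{Z}/p)^{2}$-actions the singular set is precisely such a loop of intersecting spheres (Lemma \ref{mac}), so these configurations are a real phenomenon.

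Closing this gap is where the paper's proof does its actual work, and it uses two ingredients absent from your proposal. First, it computes the $p$-rank of $H^{2q}(\Sigma_G)$ (of $H^{8}(\Sigma_G)$ when $q=2$): by the same comparison this equals $1$ (resp.\ $2$), and since every $G$-orbit of components whose stabilizer contains $\mathbb{Z}/p$ contributes at least one $\mathbb{Z}/p$ there via $H^{2q}(G_X;H^{0})_{(p)}$, there is at most one (resp.\ two) such orbit --- this is what kills configurations like the loop-plus-spheres example. Second, it rules out intersecting spheres outright: if a sphere $S_t^{2}\subset\mathrm{Fix}(u)$ met a sphere $S_s^{2}\subset\mathrm{Fix}(s)$, then $D$ would act on $S_t^{2}$ orientation-preservingly with a fixed point in $S_t^{2}\cap S_s^{2}$; since finite subgroups of $\mathrm{Homeo}(S^{2})$ are conjugate to linear groups, the image of $D$ in $\mathrm{Homeo}(S_t^{2})$ is cyclic, so $s\in[D,D]$ acts trivially on $S_t^{2}$, i.e.\ $S_t^{2}\subset\mathrm{Fix}(s)$ meets the distinct component $S_s^{2}$ of $\mathrm{Fix}(s)$, contradicting that $\mathrm{Fix}(s)$ is a disjoint union of spheres and points. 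Only with both facts in hand does the degree-$(2q+2)$ count together with $\chi(\mathrm{Fix}(s))=2+b_2(M)$ force $b_2(M)\leq 2$. Your proposal needs these two steps (or substitutes for them) to become a proof.
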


\begin{proof}
Suppose that $D$ acts effectively homologically trivially on $M.$ The
singular set $\Sigma $ is described as following. Note that the fixed point
set $\mathrm{Fix}(s)$ is a disjoint union of discrete points and
2-dimensional spheres. Since the $p$-th power $%
(s^{i}t^{q^{j}})^{p}=t^{pq^{j}}$ for each $i$ and $j>0$ and $%
(s^{i}t)^{q}=t^{q},$ we see that the fixed point set $\mathrm{Fix}%
(s^{i}t^{q^{j}})\subset \mathrm{Fix}(t^{q^{n-1}}),$ which is also a disjoint
union of discrete points and 2-dimensional spheres. Therefore, the singular
set $\Sigma =\mathrm{Fix}(s)\cup \mathrm{Fix}(t^{q^{n-1}})$ is a union of
spheres and discrete points. By Theorem \ref{n>2}, distinct 2-spheres in $%
\mathrm{Fix}(s)\cup \mathrm{Fix}(t^{q^{n-1}})$ cannot have a discrete
intersection. Therefore, the singular set $\Sigma $ consists of (say $n_{1}$%
) isolated fixed points and (say $n_{2}$) 2-spheres. When $b_{2}(M)\geq 3,$
the fixed point set $\mathrm{Fix}(g)$ of any order-$p$ element $g\in D$
cannot be purely 2-dimensional consisting of two 2-spheres (otherwise, the $%
\chi (\mathrm{Fix}(g))=4=2+b_{2}(M),$ a contradiction). Lemma \ref{spherehom}
implies arbitrary two 2-spheres in $\mathrm{Fix}(g)$ are independent in $%
H_{2}(M;F_{p}).$ This implies that any element $h\in N_{D}(\langle g\rangle
) $ (a normalizer of $\langle g\rangle $) preserves each 2-sphere component
of $\mathrm{Fix}(g)$ (see Corollary \ref{cor1}). Therefore, each point and
each sphere in $\Sigma $ is invariant under the action of $D.$

Since the odd-dimensional cohomology groups of $D$ are vanishing (cf. Lemma %
\ref{cohom}), the Borel spectral sequence collapses for both $%
M_{D}\rightarrow BD$ and $\Sigma _{D}\rightarrow BD$ by Lemma \ref{collp}.
We have that%
\begin{eqnarray*}
H^{4q}(\Sigma _{D};\mathbb{Z}) &\cong &H^{4q}(D;H^{0}(\Sigma ;\mathbb{Z}%
))\tbigoplus H^{4q-2}(D;H^{2}(\Sigma ;\mathbb{Z})) \\
&\cong &(\mathbb{Z}/pq^{n})^{n_{1}+n_{2}}\tbigoplus (\mathbb{Z}%
/q^{n})^{n_{2}}, \\
H^{4q}(M_{D};\mathbb{Z}) &\cong &H^{4q}(D;H^{0}(M;\mathbb{Z}))\tbigoplus
H^{4q-2}(D;H^{2}(M;\mathbb{Z}))\tbigoplus H^{4q-4}(D;H^{4}(M;\mathbb{Z})) \\
&\cong &(\mathbb{Z}/pq^{n})^{2}\tbigoplus (\mathbb{Z}/q^{n})^{b_{2}(M)}
\end{eqnarray*}%
when $q=2$ (or $(\mathbb{Z}/pq^{n})\tbigoplus (\mathbb{Z}/q^{n})^{b_{2}+1}$
when $q>2$). By Lemma \ref{Bredon}, $H^{4q}(\Sigma _{D};\mathbb{Z})\cong
H^{4q}(M_{D};\mathbb{Z})$, which gives that $n_{1}+n_{2}=2$ (or $%
n_{1}+n_{2}=1$ when $q>2$) and $n_{1}+2n_{2}=2+b_{2}\leq 4,$ which is a
contradiction to that $b_{2}(M)\geq 3.$
\end{proof}

\subsection{Group actions of the quaternion group}

In this subsection, we study the group action of the quaternion group $%
D_{2}^{\ast }=\langle a,b\mid a^{4}=1,a^{2}=b^{2},[a,b]=a^{2}\rangle .$ We
prove the following.

\begin{theorem}
\label{main3}Suppose that $M$ is a four-dimensional closed manifold with $%
H_{1}(M;\mathbb{Z})=0$ and the the second Betti number $b_{2}(M)\geq 3.$ The
group $D_{2}^{\ast }$ cannot act effectively homologically trivially on $M$
by homeomorphisms.
\end{theorem}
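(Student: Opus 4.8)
The plan is to exploit the special subgroup structure of $D_{2}^{\ast}=Q_{8}$: it has a \emph{unique} involution, namely the central element $a^{2}$, and every element of order $4$ squares to $a^{2}$. Consequently every nontrivial cyclic subgroup contains $\langle a^{2}\rangle$, so $\mathrm{Fix}(g)\subseteq\mathrm{Fix}(a^{2})$ for each $g\neq 1$ and the singular set collapses to $\Sigma=\mathrm{Fix}(a^{2})$. Since $\langle a^{2}\rangle\cong\mathbb{Z}/2$ acts effectively and homologically trivially, Lemma \ref{first} shows $\Sigma$ is a disjoint union of $2$-spheres and isolated points; in particular $H^{1}(\Sigma;\mathbb{Z})=0$. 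Write $n_{2}$ for the number of $2$-sphere components. Because $\langle a^{2}\rangle$ is normal, every element of $G$ normalizes it, so Corollary \ref{cor1} (using $b_{2}\geq 3$) gives that each element of $G$ preserves every $2$-sphere setwise and acts homologically trivially on it. Hence $H^{2}(\Sigma;\mathbb{Z})\cong\mathbb{Z}^{n_{2}}$ is a trivial $G$-module, and in $H^{0}(\Sigma;\mathbb{Z})$ each sphere likewise contributes a trivial summand. The isolated points are permuted through $G/\langle a^{2}\rangle\cong(\mathbb{Z}/2)^{2}$, with point-stabilizers among $\{\langle a^{2}\rangle,\ \mathbb{Z}/4,\ Q_{8}\}$; let $a_{0},a_{1},a_{2}$ count the $G$-orbits with stabilizer $Q_{8}$, $\mathbb{Z}/4$, $\langle a^{2}\rangle$ respectively. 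Thus as $G$-modules $H^{0}(\Sigma)\cong\mathbb{Z}^{\,n_{2}+a_{0}}\oplus\mathbb{Z}[G/(\mathbb{Z}/4)]^{a_{1}}\oplus\mathbb{Z}[G/\langle a^{2}\rangle]^{a_{2}}$ (the first summand trivial) and $H^{2}(\Sigma)\cong\mathbb{Z}^{n_{2}}$ trivial.

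Next I would run the two Borel spectral sequences. For $M_{G}$ the action is homologically trivial and the odd cohomology of $D_{2}^{\ast}$ vanishes (Lemma \ref{cohom}), so Lemma \ref{collp} gives $\mathrm{GR}\,H^{i}(M_{G})\cong H^{i}(G;\mathbb{Z})\oplus H^{i-2}(G;\mathbb{Z})^{b_{2}}\oplus H^{i-4}(G;\mathbb{Z})$. For $\Sigma_{G}$ only the rows $j=0,2$ survive (as $H^{1}\Sigma=0$), and by Shapiro's lemma the modules $H^{0}\Sigma,H^{2}\Sigma$ have vanishing odd-degree $G$-cohomology (the odd cohomology of $Q_{8},\mathbb{Z}/4,\mathbb{Z}/2$ with $\mathbb{Z}$-coefficients is zero); the parity argument then forces $d_{2}=d_{3}=0$, so the sequence collapses and $\mathrm{GR}\,H^{i}(\Sigma_{G})\cong H^{i}(G;H^{0}\Sigma)\oplus H^{i-2}(G;H^{2}\Sigma)$, where Shapiro gives $H^{i}(G;H^{0}\Sigma)=H^{i}(G)^{n_{2}+a_{0}}\oplus H^{i}(\mathbb{Z}/4)^{a_{1}}\oplus H^{i}(\mathbb{Z}/2)^{a_{2}}$ and $H^{i-2}(G;H^{2}\Sigma)=H^{i-2}(G)^{n_{2}}$. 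By Lemma \ref{bred} these two graded groups have isomorphic total groups for $i>4$.

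The main obstacle is that, unlike the semidirect-product cases of Theorems \ref{main1}--\ref{main2} where an odd prime $p$ isolated a single filtration piece, here $|Q_{8}|$ is a $2$-power: all the relevant cohomology is $2$-primary and is spread over several filtration degrees, so genuine extension problems obstruct reading off the groups from their associated graded. I would get around this by passing to the $2$-adic valuation $v_{2}(A):=\log_{2}|A|$ of the finite abelian $2$-groups $H^{i}(M_{G})\cong H^{i}(\Sigma_{G})$ for $i=6,8$; since orders are multiplicative in short exact sequences, $v_{2}$ of a filtered group equals the sum of $v_{2}$ over the collapsed $E_{2}$-terms and is therefore extension-insensitive. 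Using $v_{2}(H^{j}(Q_{8}))=2$ for $j\equiv 2$ and $=3$ for $j\equiv 0\pmod 4$ $(j>0)$, together with $v_{2}(H^{j}(\mathbb{Z}/4))=2$ and $v_{2}(H^{j}(\mathbb{Z}/2))=1$ for even $j>0$, I compute both sides in degrees $8$ and $6$. Degree $8$ yields $6+2b_{2}=5n_{2}+3a_{0}+2a_{1}+a_{2}$, and degree $6$ yields $4+3b_{2}=5n_{2}+2a_{0}+2a_{1}+a_{2}$.

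Subtracting the two identities cancels $n_{2},a_{1},a_{2}$ and leaves $a_{0}=2-b_{2}$. As $a_{0}\geq 0$ is a count of isolated global fixed points while $b_{2}\geq 3$, this is impossible, completing the proof. I expect the only delicate points to be the justification of the collapse of the $\Sigma_{G}$-sequence for the nontrivial permutation module $H^{0}\Sigma$ (handled by Shapiro plus parity) and the extension-insensitivity that legitimizes the valuation computation; once these are in place the linear algebra is immediate and, reassuringly, returns $a_{0}=0$ exactly at the sharp value $b_{2}=2$.
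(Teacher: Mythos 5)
Your proposal is correct and follows essentially the same route as the paper's own proof: reduce to $\Sigma=\mathrm{Fix}(a^{2})$ via the unique involution of $Q_{8}$, use Corollary \ref{cor1} to see that spheres give trivial $G$-summands and classify point-orbits by stabilizer, collapse both Borel spectral sequences, identify $H^{i}(M_{G})$ with $H^{i}(\Sigma_{G})$ for $i>4$ via Lemma \ref{bred}, and compare orders in degrees $6$ and $8$ to extract $b_{2}(M)=2-a_{0}\leq 2$ (your $a_{0},a_{1},a_{2}$ are the paper's $n_{12},n_{11},n_{10}$, and your $v_{2}$-valuation bookkeeping is exactly the paper's comparison of cardinalities of the graded modules). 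Your explicit justifications of the collapse for $\Sigma_{G}$ (Shapiro plus parity) and of extension-insensitivity are points the paper leaves implicit, but they do not change the argument.
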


\begin{proof}
Since the order of $a^{2}$ is $2,$ the fixed point set $\mathrm{Fix}(a^{2})$
is a disjoint union of 2-spheres and discrete points by Lemma \ref{first}.
Note that any non-trivial element $g\neq a^{2}$ has $g^{2}=a^{2}.$ This
implies that $\mathrm{Fix}(g)\subset \mathrm{Fix}(a^{2})$ and the singular
set $\Sigma =\mathrm{Fix}(a^{2}).$ Suppose that $\mathrm{Fix}(a^{2})$
consists of $n_{1}$ discrete points and $n_{2}$ 2-spheres. The quotient
group $D_{2}^{\ast }/\langle a^{2}\rangle $ acts invariantly on the fixed
point set $\mathrm{Fix}(a^{2}),$ preserving the $n_{2}$ individual
2-spheres, while the action could permute the $n_{1}$ discrete points. Since
each 2-sphere in $\mathrm{Fix}(a^{2})$ represents a non-trivial homology
class, $D_{2}^{\ast }/\langle a^{2}\rangle $ preserves each such a 2-sphere.
Let $n_{1i}$ $(i=0,1,2)$ denote the number of $D_{2}^{\ast }/\langle
a^{2}\rangle $-orbits of discrete points in $\mathrm{Fix}(a^{2})$ with
stabilizers of $2$-rank $i$. Since the odd-dimensional cohomology groups of $%
G:=D_{2}^{\ast }$ vanish (cf. Lemma \ref{cohom}), the Borel spectral
sequence collapses by Lemma \ref{collp}. Note that 
\begin{equation*}
H^{0}(\Sigma ;\mathbb{Z})\cong (\mathbb{Z[}G]\tbigotimes\nolimits_{\mathbb{%
Z\langle }a^{2}\rangle }\mathbb{Z)}^{n_{10}}\tbigoplus (\mathbb{Z[}%
G]\tbigotimes\nolimits_{\mathbb{Z[Z}/4]}\mathbb{Z)}^{n_{11}}\tbigoplus 
\mathbb{Z}^{n_{12}+n_{2}}.
\end{equation*}%
Therefore, we have the isomorphisms of graded modules%
\begin{eqnarray*}
\mathrm{GR}(H^{8}(M_{G};\mathbb{Z})) &\cong &H^{8}(G;\mathbb{Z})\tbigoplus
H^{6}(G;H^{2}(M;\mathbb{Z}))\tbigoplus H^{4}(G;H^{4}(M;\mathbb{Z)}) \\
&\cong &(\mathbb{Z}/8)^{2}\tbigoplus (\mathbb{Z}/2)^{2b_{2}(M)}, \\
\mathrm{GR}(H^{8}(\Sigma _{G};\mathbb{Z})) &\cong &H^{8}(G;H^{0}(\Sigma ;%
\mathbb{Z)})\tbigoplus H^{6}(G;H^{2}(\Sigma ;\mathbb{Z)}), \\
&\cong &(\mathbb{Z}/8)^{n_{12}+n_{2}}\tbigoplus (\mathbb{Z}%
/4)^{n_{11}}\tbigoplus (\mathbb{Z}/2)^{n_{10}}\tbigoplus (\mathbb{Z}%
/2)^{2n_{2}}
\end{eqnarray*}%
and%
\begin{eqnarray*}
\mathrm{GR}(H^{6}(M_{G};\mathbb{Z})) &\cong &H^{6}(G;\mathbb{Z})\tbigoplus
H^{4}(G;H^{2}(M;\mathbb{Z)})\tbigoplus H^{2}(G;H^{4}(M;\mathbb{Z)}) \\
&\cong &(\mathbb{Z}/8)^{b_{2}(M)}\tbigoplus (\mathbb{Z}/2)^{4}, \\
\mathrm{GR}(H^{6}(\Sigma _{G};\mathbb{Z})) &\cong &H^{6}(G;H^{0}(\Sigma ;%
\mathbb{Z)})\tbigoplus H^{4}(G;H^{2}(\Sigma ;\mathbb{Z)}) \\
&\cong &(\mathbb{Z}/2)^{2(n_{12}+n_{2})}\tbigoplus (\mathbb{Z}%
/4)^{n_{11}}\tbigoplus (\mathbb{Z}/2)^{n_{10}}\tbigoplus (\mathbb{Z}%
/8)^{n_{2}}.
\end{eqnarray*}%
Considering Lemma \ref{Bredon}, the cardinalities of modules give that $%
6+2b_{2}(M)=3(n_{12}+n_{2})+2n_{11}+n_{10}+2n_{2}$ and $%
3b_{2}(M)+4=2(n_{12}+n_{2})+2n_{11}+n_{10}+3n_{2}.$ Thus, $%
b_{2}(M)=2-n_{12}\leq 2,$ a contradiction.
\end{proof}

\subsection{High rank case}

In this subsection, we will prove that a high-rank finite group acting
effectively homologically trivially on a simply connected 4-manifold $M$
with large second Betti number has to be $\mathbb{Z}/p\times \mathbb{Z}/p.$

\begin{lemma}[Borel \protect\cite{Bo}, Theorem 4.3, p.182]
\label{borel}Let $G$ be an elementary $p$-group operating on a first
countable cohomology $n$-manifold $X$ mod $p.$ Let $x\in X$ be a fixed point
of $G$ on $X$ and let $n(H)$ be the cohomology dimension mod $p$ of the
component of $x$ in the fixed point set of a subgroup $H$ of $G.$ If $%
r=n(G), $ we have 
\begin{equation*}
n-r=\sum_{H}(n(H)-r)
\end{equation*}%
where $H\ $runs through the subgroups of $G$ of index $p.$
\end{lemma}

The following is a special case of Lemma 2.1 in \cite{mc2}.

\begin{lemma}
\label{mccol}Let $M$ be a closed 4-manifold with $H_{1}(M;F_{p})=0$ and $G$
a finite group acting homologically trivially on $M.$ When $b_{2}(M)\geq 3,$
the Borel spectral sequence 
\begin{equation*}
H^{i}(G;H^{j}(M;F_{p}))\Longrightarrow H^{i+j}(M_{G};F_{p})
\end{equation*}%
collapses.
\end{lemma}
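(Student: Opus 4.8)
The plan is to combine the multiplicative structure of the Borel (Leray--Serre) spectral sequence with the unimodularity of the intersection form on $H^2(M)$, reducing the whole statement to the vanishing of a single differential on degree-two fibre classes.

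First I record the shape of the $E_2$-page. Homological triviality means $G$ acts trivially on $H^*(M;\mathbb Z)$, and since $H_1(M)=0$ Poincaré duality forces $H^*(M;\mathbb Z)$ to be torsion-free and concentrated in the even degrees $0,2,4$. Hence $E_2^{i,j}=H^i(G;\mathbb Z)\otimes H^j(M;\mathbb Z)$ is a bigraded algebra supported in the rows $j\in\{0,2,4\}$, and the fibration $M\to M_G\to BG$ makes the sequence multiplicative with each $d_r$ a graded derivation. Because the nonzero rows all carry even fibre degree, every differential lowering $j$ by an odd number vanishes automatically; in the convention $d_r\colon E_r^{i,j}\to E_r^{i+r,\,j-r+1}$ this kills $d_2$ and $d_4$, so $E_3=E_2$ and the only differentials that could be nonzero are $d_3$ (lowering $j$ by $2$) and $d_5$ (lowering $j$ by $4$). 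This is precisely the range that Lemma~\ref{collp} leaves open once $H^{\mathrm{odd}}(G;\mathbb Z)\neq 0$.

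Next I reduce to $d_3$ on $E_3^{0,2}=H^2(M)$. The bottom row $E_*^{*,0}=H^*(G;\mathbb Z)$ is pulled back from $BG$ and so consists of permanent cycles; by the derivation property $d_r(b\cup x)=\pm b\cup d_r(x)$ for $b\in E^{*,0}$, so $d_r$ is determined by its values on the fibre generators $E^{0,2}$ and $E^{0,4}$. Unimodularity says the cup product $H^2(M)\otimes H^2(M)\to H^4(M)\cong\mathbb Z$ is onto, so $E^{0,4}$ is spanned by products of degree-two fibre classes; applying the derivation property expresses $d_r$ on $E^{0,4}$ through its values on $E^{0,2}$. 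In particular $d_5$ kills $E^{0,2}$ for dimensional reasons (its target lies in a negative row), hence kills $E^{0,4}$, hence vanishes once $d_3=0$. Thus everything comes down to showing $d_3\colon H^2(M)\to H^3(G;\mathbb Z)$ is zero. Writing $\alpha\cup\beta=(\alpha\cdot\beta)\,u$ for a generator $u$ of $H^4(M)$, I would fix a primitive $\alpha$ and pick $\beta$ with $\alpha\cdot\beta=0$ and $\alpha,\beta$ linearly independent; such a $\beta$ exists because $\alpha^{\perp}$ has rank $b_2(M)-1\ge 2$ and so is not contained in $\langle\alpha\rangle$. Since then $\alpha\cup\beta=0$, applying $d_3$ gives, in $E_3^{3,2}=H^3(G;\mathbb Z)\otimes H^2(M)$,
\begin{equation*}
d_3(\alpha)\otimes\beta+d_3(\beta)\otimes\alpha=0 ,
\end{equation*}
and comparing the $\alpha$- and $\beta$-components (they are independent) forces $d_3(\alpha)=d_3(\beta)=0$. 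As the primitive classes generate $H^2(M)$, this gives $d_3=0$, hence $E_\infty=E_2$ and the sequence collapses.

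The main obstacle is making the last comparison rigorous over $\mathbb Z$ rather than over a field, since $d_3(\alpha)$ lives in the finite group $H^3(G;\mathbb Z)$ and "independent classes lie in different summands" is only clean when the relevant $2\times 2$ minor is a unit. I would handle this one prime at a time: for each $p\mid |G|$ work with $\mathbb Z/p^k$ coefficients ($k$ large enough to detect the $p$-primary part of $H^3(G;\mathbb Z)$), and choose $\beta\in\alpha^{\perp}$ whose reduction makes the $(\alpha,\beta)$-minor a unit mod $p$ --- possible because $\alpha^{\perp}\otimes\mathbb F_p$ still has dimension $b_2(M)-1\ge 2$. Running the displayed cancellation over $\mathbb Z/p^k$ then kills the $p$-part of $d_3(\alpha)$, and assembling over all $p$ gives the integral vanishing. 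This is also exactly where the hypothesis $b_2(M)\ge 3$ is essential and sharp: for $b_2=2$ a hyperbolic summand supplies a primitive isotropic $\alpha$ with $\alpha^{\perp}=\langle\alpha\rangle$, so no independent orthogonal partner $\beta$ exists and the cancellation breaks down --- consistent with the sharpness of the Betti-number bound recorded after Theorem~\ref{main}.
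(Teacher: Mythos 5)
Your proof is correct, but note that the paper never proves this statement itself: it is quoted verbatim as Lemma 2.1 of McCooey \cite{mc2}, so the only honest comparison is with that source, and your argument is essentially a correct self-contained reconstruction of the expected proof. The skeleton is sound: $H_{1}(M)=0$ forces $M$ to be orientable (as $H^{1}(M;\mathbb{Z}/2)=0$) with torsion-free cohomology concentrated in degrees $0,2,4$, so only $d_{3}$ and $d_{5}$ can be nonzero; multiplicativity plus the fact that the bottom row consists of permanent cycles reduces everything to the fibre generators $E^{0,2}$ and $E^{0,4}$; unimodularity of the intersection form exhibits the generator of $H^{4}(M)$ as a product of degree-two classes, so the whole collapse hangs on $d_{3}|_{E^{0,2}}=0$; and the derivation identity $d_{3}(\alpha)\otimes\beta+d_{3}(\beta)\otimes\alpha=0$ for orthogonal $\alpha,\beta$ is exactly where $b_{2}(M)\geq 3$ enters, since it guarantees an orthogonal partner $\beta$ independent of $\alpha$. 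Most importantly, you correctly identified and repaired the one genuine subtlety: over $\mathbb{Z}$, linear independence of $\alpha,\beta$ does not permit the componentwise cancellation (e.g.\ if $\beta$ is divisible, torsion classes survive), and your fix --- running the cancellation with $\mathbb{Z}/p^{k}$ coefficients, choosing $\beta\in\alpha^{\perp}$ with $\bar{\alpha},\bar{\beta}$ independent mod $p$ (possible since $\alpha^{\perp}$ is a direct summand of rank $b_{2}(M)-1\geq 2$), and using naturality of the spectral sequence in coefficients to kill the $p$-primary part of $d_{3}(\alpha)$ prime by prime --- is valid. Two minor points you should make explicit: orientability (hence unimodularity of the pairing) must be derived from $H_{1}(M)=0$ before Poincar\'{e} duality is invoked; and since the paper later applies the lemma with $\mathbb{Z}/p$ coefficients (in the proof of Lemma \ref{elem}), it is worth remarking that your argument runs verbatim, indeed more simply, over a field.
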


\begin{lemma}
\label{elem}Let $G=(\mathbb{Z}/p)^{k}$ act effectively homologically
trivially on a 4-manifold $M$ with $H_{1}(M;\mathbb{Z})=0.$

\begin{enumerate}
\item[(i)] If $b_{2}(M)\geq 3,$ we have $k\leq 2.$ When $k=2,$ the singular
set $\Sigma $ consists of chains of 2-spheres arranged in closed loops.
Moreover, the global fixed point $\mathrm{Fix}(G)$ consists of $b_{2}(M)+2$
points.

\item[(ii)] If $b_{2}(M)=1,p=2$ and $k=2,$ the singular set $\Sigma $
consists of chains of $3$ 2-spheres arranged in a closed loop. The global
fixed point $\mathrm{Fix}(G)$ consists of the $3$ intersection points.

\item[(iii)] If $b_{2}(M)=2,p>2$ and $k=2,$ the singular set $\Sigma $
consists of either two loops of two 2-spheres or a single loop of four
2-spheres, linking together at global fixed points.
\end{enumerate}
\end{lemma}

\begin{proof}
When $b_{2}(M)\geq 3$, the Borel spectral sequence collapses (cf. Lemma \ref%
{mccol}). Therefore, the module $H^{\ast }(M_{G};F_{p})$ is isomorphic to $%
H^{\ast }(G;F_{p})\otimes H^{\ast }(M;F_{p}).$ Recall that $H^{\ast
}(G;F_{p})$ is isomorphic to $F_{p}[x_{1},\cdots ,x_{k}]$ (with $\deg
x_{i}=1 $ for each $i$) when $p=2,$ or $F_{p}[y_{1},\cdots ,y_{k}]\bigotimes
\Lambda \lbrack x_{1},\cdots ,x_{k}]$ (with $\deg x_{i}=1$ and $\deg y_{j}=2$
for any $i,j$) when $p>2$ (see \cite{xiang}, p.45). Note that we cannot
apply Lemma \ref{collp} to get the collapse of the Borel spectral sequence,
since the odd dimensional $F_{p}$-coefficient cohomology groups may not
vanish. Let $S$ be the multiplicative set of non-trivial elements in the
polynomial ring $F_{p}[x_{1},\cdots ,x_{k}]$ (or $F_{p}[y_{1},\cdots ,y_{k}]$%
). Then the set 
\begin{equation*}
M^{S}=\{x\in M:S\cap \ker (H^{\ast }(G;F_{p})\rightarrow H^{\ast
}(G_{x};F_{p}))=\emptyset \}
\end{equation*}%
would be the global fixed point set $M^{G}.$ The localization theory gives
an isomorphism 
\begin{equation*}
S^{-1}H_{G}^{\ast }(M;F_{p})\cong S^{-1}H_{G}^{\ast }(M^{G};F_{p})\cong
S^{-1}H^{\ast }(G;F_{p})\otimes H^{\ast }(M^{G};F_{p}).
\end{equation*}%
This clearly implies that $M^{G}$ is non-empty. From Lemma \ref{first}, we
know that the fixed point set $\mathrm{Fix}(H)$ is a union of (possibly
empty) 2-spheres $S^{2}$ and discrete points for any subgroup $H<G$ of index 
$p$. From the Borel's formula (cf. Lemma \ref{borel}) 
\begin{equation*}
4-n_{0}=\sum n(H)-n_{0},
\end{equation*}%
where $n(H)$ is the dimension of $\mathrm{Fix}(H)$ and $n_{0}$ is the
dimension of $\mathrm{Fix}(G),$ we know that there are must be some $H$ such
that $n(H)=2$ and $n_{0}=0.$ We may assume that for some nontrivial element $%
a\in G,$ the fixed point set $\mathrm{Fix}(a)$ contains $S^{2},$ which
contains a global fixed point in $M^{G}.$ Fix a decomposition $G=\langle
a\rangle \times (\mathbb{Z}/p)^{k-1}$. Then the complement $(\mathbb{Z}%
/p)^{k-1}$ acts invariantly on $S^{2},$ since two different $S^{2}$
components in $\mathrm{Fix}(a)$ represent different homology classes in $%
H_{2}M$ (cf. Lemma \ref{cor1}).

Suppose that $k\geq 3.$ When $p>2,$ the subgroup $(\mathbb{Z}/p)^{2}$ cannot
act effectively on $S^{2}$ as seen from the Borel's formula. Thus there is
an element $g\in (\mathbb{Z}/p)^{k-1}$ acting trivially on $S^{2}.$ Then the
subgroup $\langle a,g\rangle $ acts effectively on $M$ with a global fixed
point set $S^{2}.$ However, this is impossible by Borel's formula again.
When $p=2,$ each element in $(\mathbb{Z}/p)^{2}$ acts on $S^{2}$ by
orientation-preserving since the codimension of the fixed point set of each
element is even. Once again, the action of $(\mathbb{Z}/2)^{2}$ cannot be
effective, since there is a global fixed point in $S^{2}$. Using Borel's
formula again, this is impossible.

The only admitted case is that $k=2$ and each generator $a,b$ of $G=\langle
a\rangle \times \langle b\rangle \cong (\mathbb{Z}/p)^{2}$ fixes several
copies of $S^{2}$. Moreover, one generator rotates the spheres fixed by the
other generator. Therefore, the singular set is some chains of 2-spheres $%
S_{1},S_{2},\cdots ,S_{k}$ arranged in closed loops with $S_{i}$ intersects
once with $S_{i-1}$ and $S_{i+1}$ respectively. Furthermore, the 2-spheres $%
S_{2i-1}$ (resp. $S_{2i}$) are fixed by the generator $a$ (resp. $b$). The
fixed point $\mathrm{Fix}(G)$ consists of $b_{2}(M)+2$ points considering
the Euler characteristics.

When $b_{2}(M)=1$ and $p=2,$ the singular set $\Sigma =\mathrm{Fix}(a)\cup 
\mathrm{Fix}(b)\cup \mathrm{Fix}(ab).$ If $\Sigma $ is of dimension $0,$ the
element $b$ acts invariantly on $\mathrm{Fix}(a)$ consisting of $3$ points
(by Lemma \ref{euler}) and thus has a fixed point. But this is impossible by
the Borel's formula. After changing of basis, we may assume that $\mathrm{Fix%
}(a)$ consists of a 2-sphere $S_{a}^{2}$ and a discrete point $x.$ Since $b$
acts trivially on $x,$ there is a sphere $S_{b}^{2}\subset \mathrm{Fix}(b)$
containing $x$ by the Borel formula. Note that $a$ acts invariantly and
non-trivially on $S_{b}^{2},$ the sphere $S_{b}^{2}$ intersects $S_{a}^{2}$
at a point $y.$ The Borel's formula for $y$ shows that $\mathrm{Fix}(ab)$
contains another sphere $S_{ab}^{2}$ connecting $S_{a}^{2}$ and $x.$

We consider the case when $b_{2}(M)=2$ and $p>2.$ The singular set $\Sigma =%
\mathrm{Fix}(a)\cup _{i=0}^{p-1}\mathrm{Fix}(a^{i}b),$ a union of fixed
point sets. Note that $\mathrm{Fix}(a)$ is a union of (possibly empty)
discrete points and 2-spheres. If $\Sigma $ is of dimension $0,$ the element 
$b$ acts invariantly on $\mathrm{Fix}(a)$ consisting of $4$ points and thus
has a fixed point. But this is impossible by the Borel's formula. If $\Sigma 
$ is of dimension two, after changing of basis we may assume that $\mathrm{%
Fix}(a)$ contains a 2-sphere. Note that the Borel's formula implies that at
each global fixed point $z\in \mathrm{Fix}(G),$ there are two elements $%
g,h\in G$ generating different subgroups with two distinct spheres $%
S_{g}^{2}\subset \mathrm{Fix}(g)$ and $S_{h}^{2}\subset \mathrm{Fix}(h)$
such that $z\in S_{g}^{2}\cap S_{h}^{2}$ . Since any element in $G$ acts
invariantly on these spheres, each sphere $S_{g}^{2}$ or $S_{h}^{2}$
contains two global fixed points in $\mathrm{Fix}(G)$. These spheres form
either two loops of 2-spheres or a single loop consisting of four spheres,
linking together at global fixed points.
\end{proof}

\begin{remark}
There are actions of $\mathbb{Z}/2\times \mathbb{Z}/2$ on $S^{2}\times S^{2}$
and actions of $\mathbb{Z}/3\times \mathbb{Z}/3$ on $\mathbb{C}P^{2}$
without global fixed points. The isometry group $(\mathrm{SO}(3)\times 
\mathrm{SO}(3))\rtimes \mathbb{Z}/2$ of $S^{2}\times S^{2}$ and the isometry
group $PU(3)$ of $\mathbb{C}P^{2}$ both contain elementary subgroup of ranks
larger $2.$ This means that the restrictions of Betti numbers and prime $p$
in Lemma \ref{elem} cannot be dropped. More information about group actions
on $\mathbb{C}P^{2}$ can be found in \cite{hl}.
\end{remark}

It is proved in \cite{mc2} (Lemma 2.6) that the singular set $\Sigma $ in
Lemma \ref{elem} is actually connected and there is only one such a loop.
For convenience, we repeat the proof here (note that there are some typos in
the proof of Corollary 2.5 and Lemma 2.6 in \cite{mc2}).

\begin{lemma}
\label{mac}Let $G=(\mathbb{Z}/p)^{2}$ act effectively homologically
trivially on a closed 4-manifold $M$ with $H_{1}(M;\mathbb{Z})=0.$ Suppose
that one the following conditions holds:

(1) $b_{2}(M)\geq 3,$

(2) $b_{2}(M)=1$ and $p=2,$

(3) $b_{2}(M)=2$ and $p>2.$

Then the singular set $\Sigma $ is a chain of 2-spheres arranged in a closed
loop. Moreover, each sphere represents a primitive class in $H_{2}(M;\mathbb{%
Z})$ and together these classes generate $H_{2}(M;\mathbb{Z}).$ The number
of fixed spheres is $b_{2}(M)+2.$
\end{lemma}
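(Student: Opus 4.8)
The plan is to upgrade the structural information already extracted in Lemma \ref{elem}. By that lemma (applied with $k=2$), the singular set $\Sigma$ is a disjoint union of finitely many closed loops, each an alternating chain of $2$-spheres from $\mathrm{Fix}(a)$ and $\mathrm{Fix}(b)$ meeting transversally at the global fixed points, and $\mathrm{Fix}(G)$ consists of $b_{2}(M)+2$ points. In each loop the number of constituent spheres equals the number of its cyclic intersection points, and every such intersection point is a global fixed point; summing over loops shows the total number of spheres equals $|\mathrm{Fix}(G)|=b_{2}(M)+2$ regardless of the number of loops. Thus the three remaining assertions are: (a) $\Sigma$ is connected (a single loop); (b) each sphere is primitive; and (c) the sphere classes generate $H_{2}(M;\mathbb{Z})$.

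Primitivity is the quickest step. Two consecutive spheres $S_{i},S_{i+1}$ in a loop meet transversally in exactly one point, so $[S_{i}]\cdot[S_{i+1}]=\pm 1$; if $[S_{i}]=m\gamma$ for some $\gamma\in H_{2}(M;\mathbb{Z})$ then $m$ divides $[S_{i}]\cdot[S_{i+1}]=\pm 1$, forcing $m=\pm 1$. Hence each $[S_{i}]$ is primitive. The same transverse bookkeeping records the full intersection pattern, $[S_{i}]\cdot[S_{i+1}]=\pm 1$ and $[S_{i}]\cdot[S_{j}]=0$ for non-adjacent $i,j$, which I use in (c).

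For connectivity and generation I would compare Borel constructions. Since the action is homologically trivial, Lemma \ref{mccol} makes the Borel spectral sequence for $M_{G}$ collapse, so $H^{*}(M_{G};\mathbb{Z}/p)\cong H^{*}(G;\mathbb{Z}/p)\otimes H^{*}(M;\mathbb{Z}/p)$ is a free $H^{*}(G;\mathbb{Z}/p)$-module of rank $b_{2}(M)+2$, whose dimension in degree $i$ is $(i-1)(b_{2}(M)+2)$. By Lemma \ref{bred}, $H^{i}(M_{G};\mathbb{Z}/p)\cong H^{i}(\Sigma_{G};\mathbb{Z}/p)$ for $i>4$. On the $\Sigma$ side, $G$ fixes every sphere (Corollary \ref{cor1}) and acts trivially on $H^{*}(\Sigma)$, where $H^{0}(\Sigma)=(\mathbb{Z}/p)^{L}$, $H^{1}(\Sigma)=(\mathbb{Z}/p)^{L}$ (one loop class per component), and $H^{2}(\Sigma)=(\mathbb{Z}/p)^{b_{2}(M)+2}$, with $L$ the number of loops. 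I would then compute $\dim H^{i}(\Sigma_{G};\mathbb{Z}/p)$ from this spectral sequence and match it against $(i-1)(b_{2}(M)+2)$ for all $i>4$: the $L$-dependent contributions carried by $H^{0}(\Sigma)$ and $H^{1}(\Sigma)$ can be reconciled with a free module of this rank only when $L=1$, giving connectivity, while the comparison on the $H^{2}(\Sigma)$ row forces the restriction $H_{2}(\Sigma;\mathbb{Z}/p)\to H_{2}(M;\mathbb{Z}/p)$ to be onto, i.e. the sphere classes generate modulo $p$.

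The hard part is that, unlike the $M_{G}$ sequence, the $\Sigma_{G}$ spectral sequence does \emph{not} collapse: each loop class in $H^{1}(\Sigma)$ transgresses, so I must control the differentials $d_{2},d_{3}$, which are derivations over $H^{*}(G;\mathbb{Z}/p)$ (a polynomial ring for $p=2$, and $\mathbb{Z}/p[y_{1},y_{2}]\otimes\Lambda[x_{1},x_{2}]$ for $p$ odd), and identify their images as explicit ideals. Tracking these ideals degree by degree is what pins down $L=1$ and produces exactly the two relations that cut the rank $b_{2}(M)+2$ of the sphere classes down to $b_{2}(M)$. Finally, to pass from generation modulo the single prime $p$ to integral generation, I would combine this with primitivity and the unimodularity of the intersection form (Poincar\'e duality on the simply connected closed $4$-manifold $M$): mod-$p$ generation shows the sublattice spanned by the $[S_{i}]$ has full rank and index prime to $p$, and the cyclic $\pm 1$ intersection pattern together with primitivity upgrades this to index $1$. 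I expect the delicate verification of this differential structure to be the principal technical obstacle.
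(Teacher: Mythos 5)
Your reduction to (a) connectivity, (b) primitivity, (c) generation, and your sphere count $F=b_{2}(M)+2$, all match the paper's framing; but the engine you propose for (a) and (c) cannot work, so the ``delicate verification'' you defer is not merely hard, it is impossible. Here is why. Every connected component of $\Sigma$ contains a global fixed point (the intersection points of adjacent spheres lie in $\mathrm{Fix}(G)$ by Lemma \ref{elem}), so each component's Borel fibration has a section; hence $E_{2}^{\ast,0}$ receives no differentials, and the only possibly nonzero differential is $d_{2}$ from the sphere row to the loop-class row. Computing $H^{\ast}(X_{G};\mathbb{Z}/p)$ for a single loop $X$ of $F_{1}$ spheres by Mayer--Vietoris over its cycle graph (each $H_{G}^{\ast}(S_{i})$ is free over $H^{\ast}(G;\mathbb{Z}/p)$ on $1,\sigma_{i}$, and the two restrictions of $\sigma_{i}$ to the poles differ by a non-nilpotent class $e_{i}\in H^{2}(G;\mathbb{Z}/p)$, adjacent $e_{i}$'s generating an ideal of finite codimension) gives $\dim H^{n}(X_{G};\mathbb{Z}/p)=(n-1)F_{1}$ for all $n>4$. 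Summing over components, $\dim H^{n}(\Sigma_{G};\mathbb{Z}/p)=(n-1)F$ for \emph{any} number $L$ of loops: the extra $(n+1)$ contributed by $H^{0}$ of each additional loop is exactly cancelled, because its loop class absorbs an extra $(n+1)$-dimensional piece of the image of $d_{2}$ on the sphere row. So your consistency check against $(n-1)(b_{2}(M)+2)$ recovers only $F=b_{2}(M)+2$, which is already known from Lemma \ref{euler}; it yields no constraint forcing $L=1$ and none forcing $H_{2}(\Sigma;\mathbb{Z}/p)\rightarrow H_{2}(M;\mathbb{Z}/p)$ to be onto (localization is equally blind, since $|\Sigma^{G}|=F$ in every case). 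The paper's proof uses an input your setup has no access to: the spectral sequence $H^{i}(G;H^{j}(M,\Sigma))\Longrightarrow H^{i+j}(M/G,\Sigma/G)$ of the \emph{orbit-space} pair (valid because $G$ acts freely off $\Sigma$), whose abutment vanishes above degree $4$ and is $\mathbb{Z}$ in degree $4$, combined with Poincar\'{e} duality on $M-\Sigma$ versus $(M-\Sigma)/G$ and the fact that meridians upstairs are $p$-fold covers of meridians downstairs. Those finiteness constraints give rank inequalities forcing $\ker(H^{2}(M)\rightarrow H^{2}(\Sigma))=0$ (your (c)) and at most two components, and a final argument with the multiplicative structure of that spectral sequence excludes two components.

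There are also two smaller gaps. For (b), consecutive spheres meet in exactly one point only in a loop with at least three spheres; a loop of exactly two spheres has its spheres meeting in \emph{two} points, so $[S_{1}]\cdot[S_{2}]\in\{0,\pm2\}$ and your divisibility argument gives nothing. Since connectivity is not yet available at this stage, such components cannot be excluded; indeed for $b_{2}(M)=2$, $p>2$ the configuration of two loops of two spheres is exactly one of the cases left open by Lemma \ref{elem}. The paper handles it separately: for $p=2$ it is ruled out by Lemma \ref{spherehom} (the classes are nonzero mod $2$), and for $p>2$ the possible factor of $2$ is carried along as torsion. Finally, your closing step (mod-$p$ generation plus primitivity plus unimodularity implies integral generation) is not an argument as stated: a full-rank sublattice of index prime to $p$, each of whose given generators is primitive, need not have index one, so this implication would itself require proof.
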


\begin{proof}
Let $N$ be the number of connected components of $\Sigma .$ Consider the
long exact sequence%
\begin{eqnarray*}
\cdots &\rightarrow &H^{1}(M,\Sigma )\rightarrow H^{1}(M)\rightarrow
H^{1}(\Sigma ) \\
&\rightarrow &H^{2}(M,\Sigma )\rightarrow H^{2}(M)\rightarrow H^{2}(\Sigma
)\rightarrow H^{3}(M,\Sigma )\rightarrow \cdots
\end{eqnarray*}%
for the pair $(M,\Sigma ),$ we have that $H^{1}(M,\Sigma )\cong \mathbb{Z}%
^{N-1},H^{2}(M,\Sigma )\cong \mathbb{Z}^{N+L},$ where $L$ is the rank of the
cokernel $H^{1}(\Sigma )\rightarrow H^{2}(M,\Sigma ).$ By Lemma \ref{elem},
the singular $\Sigma $ consists of loops of spheres. If a component of $%
\Sigma $ contains at least three spheres, then each sphere intersects its
neighbor geometrically once and thus represents primitive element in $%
H_{2}(M).$ If a component of $\Sigma $ contains only two spheres, the
homology class represented by the sphere may be a multiple of $2.$ But this
case only happen when $p>2,$ since the homology class is non-trivial in $%
H_{2}(M;\mathbb{Z}/2)$ when $p=2$ (cf. Lemma \ref{spherehom}). Therefore,
the relative cohomology $H^{3}(M,\Sigma )$ as the cokernel of $%
H^{2}(M)\rightarrow H^{2}(\Sigma )$ is isomorphic to $\mathbb{Z}%
^{L+2}\bigoplus T$ by counting ranks, where $T=0$ when $p=2$ and $2T=0$ when 
$p>2.$

Let $\pi :M-\Sigma \rightarrow M/G-\Sigma /G$ be the projection and denote
by $M^{\ast }=M/G,$ $\Sigma ^{\ast }=\Sigma /G.$ Since $M-\Sigma $ is a
manifold, there is a diagram (not commutative in the ordinary sense) given
by the Poincar\'{e} duality:%
\begin{equation*}
\begin{array}{ccc}
H^{3}(M,\Sigma ) & \overset{\cong }{\rightarrow } & H_{1}(M-\Sigma ) \\ 
\uparrow \pi ^{\ast } &  & \downarrow \pi _{\ast } \\ 
H^{3}(M^{\ast },\Sigma ^{\ast }) & \overset{\cong }{\rightarrow } & 
H_{1}(M^{\ast }-\Sigma ^{\ast }).%
\end{array}%
\end{equation*}%
Note that $H_{1}(M-\Sigma )$ is generated by meridians to the spheres in $%
\Sigma $ and each of these is a $p$-fold cover of its image in $%
H_{1}(M^{\ast }-\Sigma ^{\ast }).$

Consider the Borel spectral sequence $E_{2}^{i,j}=H^{i}(G;H^{j}(M,\Sigma
))\Longrightarrow H^{i+j}(M^{\ast },\Sigma ^{\ast }).$ Since $\pi ^{\ast }$
factors through $H^{3}(M^{\ast },\Sigma ^{\ast })\twoheadrightarrow
E_{\infty }^{0,3}\hookrightarrow E_{2}^{0,3}=H^{3}(M,\Sigma ),$ we see that
the coker($E_{\infty }^{0,3}\hookrightarrow E_{2}^{0,3}$) is of exponent at
most $p.$ Since $H^{4}(M^{\ast },\Sigma ^{\ast })\cong \mathbb{Z}$, the
stable term $E_{\infty }^{i,j}=0$ if $i+j\geq 4,i>0.$ We consider $E_{\infty
}^{3,1}$ and $E_{\infty }^{2,2}.$ Note that $E_{2}^{3,1}$ is killed by $%
E_{3}^{0,3},$ and $E_{2}^{2,2}$ is killed by $E_{2}^{0,3}$ and $E_{2}^{4,1}.$
Therefore, 
\begin{equation*}
\mathrm{rk}E_{2}^{0,3}+\mathrm{rk}E_{2}^{4,1}\geq \mathrm{rk}E_{2}^{2,2}+%
\mathrm{rk}E_{2}^{3,1},
\end{equation*}%
which gives that $(2+L)+(3N-3)\geq 2N+2L+(N-1)$ and $L=0.$ This shows that $%
H^{2}(M)\rightarrow H^{2}(\Sigma )$ is injective and thus $\Sigma $
represents all of $H_{2}(M).$

Note that $\pi ^{\ast }$ is injective and thus $H^{3}(M^{\ast },\Sigma
^{\ast })\twoheadrightarrow E_{\infty }^{0,3}$ is isomorphic. Therefore, $%
E_{\infty }^{2,1}=0$ and $E_{2}^{2,1}$ is killed by $E_{2}^{0,2}=H^{2}(M,%
\Sigma ).$ Thus $N\geq 2(N-1),N\leq 2.$ Suppose that $N=2.$ We will prove
this is impossible to get $N=1.$ Denote by $b,c$ some two generators on $%
H^{2}(M,\Sigma )$ and $a$ a generator of $H^{1}(M,\Sigma ).$ The surjective
map $d_{2}:E_{2}^{0,2}\rightarrow E_{2}^{2,1}\ $gives that $d_{2}(b)=\alpha
a,d_{2}(c)=\beta a$ for some integers $\alpha ,\beta .$ The multiplicative
properties of the spectral sequence implies that $\ker
(d_{2}^{2,2}:E_{2}^{2,2}\rightarrow E_{2}^{4,1})=\langle \beta b-\alpha
c\rangle $ and thus there is some $e\in H^{3}(M,\Sigma )$ such that $%
d_{2}(e)=\beta b-\alpha c.$ Since $E_{3}^{3,1}=E_{2}^{3,1}\cong \mathbb{Z}%
/p\cong \langle \mu \rangle ,$ we have $d_{3}(f)=\mu a$ for some $f\in
H^{3}(M,\Sigma ),$ independent of $e.$ But $d_{3}(\alpha f)=d_{3}(\beta
f)=0, $ since $\mu \alpha a,\mu \beta b$ are in the image of $E_{2}^{1,2}.$
Therefore, $\ker (d_{2}^{2,3}:E_{2}^{2,3}\rightarrow E_{2}^{4,2})$ is of $%
\mathbb{Z}/p$-rank at least two. Since $\func{Im}(E_{2}^{0,4}\rightarrow
E_{2}^{2,3})$ is of $\mathbb{Z}/p$-rank at most $1,$ the stable term $%
E_{\infty }^{2,3}$ has $\mathbb{Z}/p$-rank at least $1.$ This is a
contradiction and thus $N=1.$
\end{proof}

\begin{theorem}
\label{main4}Suppose that $M$ is a 4-manifold with $H_{1}(M;\mathbb{Z})=0$
and the the second Betti number $b_{2}(M)\geq 3.$ Let $G$ be a minimal
non-abelian finite group of rank at least $2.$ Then $G$ cannot act
effectively homologically trivially on $M$ by homeomorphisms.
\end{theorem}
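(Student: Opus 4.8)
The plan is to argue by the $p$-rank $r$ of $G$, splitting into the cases $r\ge 3$ and $r=2$ and reducing each to the elementary abelian results of Lemmas \ref{elem} and \ref{mac}. First I would dispatch $r\ge 3$. Then $G$ contains an elementary abelian subgroup $A\cong(\mathbb{Z}/p)^3$ for some prime $p$; since $A$ is abelian while $G$ is not, $A$ is a proper subgroup, and it inherits an effective, homologically trivial action on $M$. As $b_2(M)\ge 3$, Lemma \ref{elem}(i) bounds the rank of any such elementary abelian group by $2$, contradicting $\mathrm{rank}(A)=3$. Hence $r\ge 3$ is impossible and only $r=2$ remains.

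For $r=2$ I would use the structural fact recalled at the start of this section that $G$ contains a \emph{normal} rank-two subgroup $E\cong(\mathbb{Z}/p)^2$; in each of the three families one checks moreover that $[G,G]\subseteq E$ (indeed $G/E$ is abelian for $(\mathbb{Z}/p\times\mathbb{Z}/p)\rtimes\mathbb{Z}/q^n$ and $(\mathbb{Z}/p\times\mathbb{Z}/p)\rtimes\mathbb{Z}/p$, while $[G,G]$ has order $p$ and lies in $E$ for $\mathbb{Z}/p^m\rtimes\mathbb{Z}/p^n$). Because $E$ is normal, conjugation preserves isotropy groups, so $G$ acts on the singular set $\Sigma_E=\{x:E_x\ne 1\}$; and since $E$ acts effectively and homologically trivially with $b_2(M)\ge 3$, Lemma \ref{mac} applies. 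Thus $\Sigma_E$ is a single closed loop of $N:=b_2(M)+2\ge 5$ two-spheres $S_1,\dots,S_N$, with $S_i$ fixed pointwise by an index-$p$ subgroup $H_i<E$, consecutive $S_i,S_{i+1}$ meeting transversally in one $E$-fixed point (so $H_i\ne H_{i+1}$) and disjoint otherwise, and the classes $[S_1],\dots,[S_N]$ generating $H_2(M;\mathbb{Z})$ by primitive elements.

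The contradiction should then be extracted as follows. From the intersection relations $[S_i]\cdot[S_{i\pm1}]=\pm1$ and $[S_i]\cdot[S_j]=0$ for non-adjacent $j$, and using $N\ge 5$, one shows that the classes $[S_i]$ are pairwise distinct; since the action is homologically trivial, every $g\in G$ satisfies $g_\ast[S_i]=[S_i]$ and hence preserves each sphere $S_i$ setwise and fixes each of the $N$ loop vertices. Fix a sphere $S_i$ with its two poles (the vertices on it). For any prime-order $g\in G$, Lemma \ref{first} says $\mathrm{Fix}(g)$ is a union of $2$-spheres and points; as $g$ fixes the two poles of $S_i$, either $g$ fixes $S_i$ pointwise, or $g|_{S_i}$ is a finite-order homeomorphism of $S^2$ with isolated fixed points, which by the Lefschetz formula is orientation-preserving with exactly the two poles fixed, i.e.\ a rotation. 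Consequently the image of $G$ in $\mathrm{Homeo}(S_i)$ is a finite group of rotations fixing two points, hence cyclic, so $[G,G]$ lies in the pointwise stabilizer $K_i$ of $S_i$. Combining with $[G,G]\subseteq E$ gives $[G,G]\subseteq K_i\cap E=H_i$ for every $i$; but $\bigcap_i H_i=1$ since two of the lines $H_i$ already differ, forcing $[G,G]=1$ and contradicting the non-abelianness of $G$.

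I expect the rank-two step to be the main obstacle: the reduction for $r\ge 3$ is immediate from Lemma \ref{elem}(i), whereas $r=2$ requires running the loop analysis uniformly over the three families. The delicate points are verifying $[G,G]\subseteq E$ in each family, establishing the pairwise distinctness of the classes $[S_i]$, and carefully using that $\mathrm{Fix}(g)$ has no $1$-dimensional components (Lemma \ref{first}) in order to exclude orientation-reversing behaviour on the fixed spheres, exactly as in the $S^2$-linearity arguments of Theorems \ref{main1} and \ref{main2}.
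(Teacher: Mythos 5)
Your proposal reaches the right conclusion and shares the paper's skeleton up to the midpoint: rank $\geq 3$ is killed by Lemma \ref{elem}(i), then one passes to a normal $E\cong (\mathbb{Z}/p)^2$, invokes Lemma \ref{mac} to get the closed loop of $b_2(M)+2\geq 5$ spheres $S_1,\dots ,S_N$, and uses homological triviality together with pairwise distinctness of the classes $[S_i]$ (your $N\geq 5$ intersection argument is a more careful version of what the paper asserts) to force every $g\in G$ to preserve each $S_i$ setwise and fix the vertices. But your endgame is genuinely different. The paper chooses $g$ whose conjugation action on $E$ is non-trivial, writes $gag^{-1}=a^{i}b^{j}$ with $j\neq 0$, and derives the contradiction $g\,\mathrm{Fix}(a)=\mathrm{Fix}(a^{i}b^{j})\nsubseteq \mathrm{Fix}(a)$. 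You instead show the image of $G$ in $\mathrm{Homeo}(S_i)$ is cyclic, conclude that $[G,G]$ fixes each $S_i$ pointwise, and trap $[G,G]$ in $\bigcap_i H_i=1$. Your route buys real robustness: for the metacyclic family $\mathbb{Z}/p^{m}\rtimes \mathbb{Z}/p^{n}$ with $m,n\geq 2$ and $p$ odd, the unique rank-two elementary abelian subgroup $E=\langle a^{p^{m-1}},b^{p^{n-1}}\rangle =\Omega_1(G)$ is \emph{central}, so no element of $G$ normalizes $E$ non-trivially and the paper's final contradiction is unavailable for that family, whereas your argument needs only $1\neq [G,G]\subseteq E$, which you verify in all three rank-two families.

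There is, however, one defective sub-step: your exclusion of orientation-reversing behaviour of $g|_{S_i}$. You argue that if $g$ does not fix $S_i$ pointwise then $g|_{S_i}$ has isolated fixed points because, by Lemma \ref{first}, $\mathrm{Fix}(g)$ is a union of $2$-spheres and points. That inference does not hold: if some involution $g$ restricted to a reflection of $S_i$ with fixed circle $C$, then $C\subseteq \mathrm{Fix}(g)$ need not be a \emph{component} of $\mathrm{Fix}(g)$ --- it could lie inside another $2$-sphere component $F$ of $\mathrm{Fix}(g)$ meeting $S_i$ along $C$, and nothing in Lemma \ref{first} forbids this. The correct (and shorter) exclusion is homological, and it is exactly the paper's Corollary \ref{cor1}: since $g(S_i)=S_i$, an orientation-reversing restriction would give $g_{\ast}[S_i]=-[S_i]$, impossible because $g$ is homologically trivial and $[S_i]$ is a primitive, hence non-zero, class in the torsion-free group $H_2(M;\mathbb{Z})$. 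With that repair --- plus the theorem, which the paper itself invokes in Theorem \ref{main2}, that finite subgroups of $\mathrm{Homeo}(S^2)$ are conjugate to linear groups, needed to pass from \textquotedblleft orientation-preserving and fixing two points\textquotedblright\ to \textquotedblleft cyclic image\textquotedblright\ --- your proof goes through, and in fact patches a case the paper's own argument glosses over.
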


\begin{proof}
Note that $G$ contains a normal subgroup isomorphic to $(\mathbb{Z}%
/p)^{k},k\geq 2.$ Lemma \ref{elem} implies that $k=2$ and the singular set $%
\Sigma _{0}$ of $(\mathbb{Z}/p)^{k}$ is a chain of 2-spheres $%
S_{1},S_{2},\cdots ,S_{b_{2(M)}/2+1}$ arranged in a closed loop when $k=2.$
Moreover, the 2-spheres $S_{2i-1}$ (resp. $S_{2i}$) are fixed by the
generator $a$ (resp. $b$) of $\langle a\rangle \times \langle b\rangle \cong
(\mathbb{Z}/p)^{2}$. Choose an element $g\in G$ normalizing $(\mathbb{Z}%
/p)^{k}$ non-trivially. Suppose that $gag^{-1}=a^{i}b^{j}$ for some integers 
$i,j.$ Since $G$ is minimal non-abelian, the integer $j\neq 0.$ Note that
adjacent spheres $S_{i}$ and $S_{i+1}$ represent different homology classes
in $H_{2}(M)$ when $b_{2}(M)\geq 3$ by Lemma \ref{mac}. Since the action of $%
g$ on $M$ is homologically trivial and the $b_{2}(M)+2$ spheres in $\Sigma
_{0}$ represents at least $b_{2}(M)$ homology classes in $H_{2}(M)$, there
is a fixed point $x\in \Sigma _{0}\cap \mathrm{Fix}(G)$ and the element $g$
acts invariantly on each sphere $S_{i}$. However, we have that $g\mathrm{Fix}%
(a)=\mathrm{Fix}(gag^{-1})=\mathrm{Fix}(a^{i}b^{j})\neq \mathrm{Fix}(a),$ a
contradiction.
\end{proof}

\bigskip

\begin{proof}[Proof of Theorem \protect\ref{main}]
Since every non-abelian compact Lie group contains a non-abelian finite
subgroup (cf. \cite{mc} Lemma 15), Theorems \ref{main1}, \ref{main2}, \ref%
{main3}, \ref{main4} imply that the Lie group $G$ is abelian considering the
classification of minimal non-abelian finite groups. The rank of $G$ is at
most 2 by Lemma \ref{elem}.
\end{proof}

\bigskip

\section{Actions of automorphism groups of free groups}

Fixing a basis $\{a_{1},\ldots ,a_{n}\}$ for the free group $F_{n},$ we
define several elements in $\mathrm{Aut}(F_{n})$ as follows. The inversions
are defined as 
\begin{equation*}
e_{i}:a_{i}\longmapsto a_{i}^{-1},a_{j}\longmapsto a_{j}\text{ }(j\neq i);
\end{equation*}%
while the permutations are 
\begin{equation*}
(ij):a_{i}\longmapsto a_{j},a_{j}\longmapsto a_{i},a_{k}\longmapsto a_{k}%
\text{ }(k\neq i,j).
\end{equation*}%
The subgroup $N<\mathrm{Aut}(F_{n})$ generated by all $e_{i}$ $(i=1,\ldots
,n)$ is isomorphic to $(\mathbb{Z}/2)^{n}.$ The subgroup $W_{n}<\mathrm{Aut}%
(F_{n})$ is generated by $N$ and all $(ij)$ $(1\leq i\neq j\leq n).$ Denote $%
SW_{n}=W_{n}\cap \mathrm{SAut}(F_{n})$ and $SN=N\cap \mathrm{SAut}(F_{n}).$
The element $\Delta =e_{1}e_{2}\cdots e_{n}$ is central in $W_{n}$ and lies
in $\mathrm{SAut}(F_{n})$ precisely when $n$ is even.

The following result is Proposition 3.1 of \cite{bv}.

\begin{lemma}
\label{lemlast}Suppose $n\geq 3$ and let $f$ be a homomorphism from $\mathrm{%
SAut}(F_{n})$ to a group $G$. If $f|_{SW_{n}}$ has non-trivial kernel $K$,
then one of the following holds:

1. $n$ is even, $K=\langle \Delta \rangle $ and $f$ factors through $\mathrm{%
PSL}(n,\mathbb{Z})$,

2. $K=SN$ and the image of $f$ is isomorphic to $\mathrm{SL}(n,\mathbb{Z}/2)$%
, or

3. $f$ is the trivial map.
\end{lemma}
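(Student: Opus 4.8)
The plan is to analyze the finite kernel $K=\ker(f|_{SW_n})$ purely through the group structure of $SW_n$, and then to transport the conclusion to $\mathrm{SAut}(F_n)$ by computing the normal closure of $K$ inside $\mathrm{SAut}(F_n)$. First I would record the extension
\[
1 \longrightarrow SN \longrightarrow SW_n \xrightarrow{\ \pi\ } S_n \longrightarrow 1,
\]
where $SN\cong(\mathbb{Z}/2)^{n-1}$ is identified, via the conjugation action, with the augmentation (sum-zero) submodule of the permutation module $\mathbb{F}_2^n$ of $S_n$; this action is faithful for $n\geq 3$. Since $K\trianglelefteq SW_n$, the intersection $K\cap SN$ is an $\mathbb{F}_2[S_n]$-submodule, and the submodule lattice of the augmentation module is exactly $\{0,\ SN\}$ for $n$ odd and $\{0,\ \langle\Delta\rangle,\ SN\}$ for $n$ even (here $\Delta=e_1\cdots e_n$ spans the unique fixed line, which lies in $SN$ precisely when $n$ is even).

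Next I would pin down $K\cap SN$ and the image $\bar K=\pi(K)\trianglelefteq S_n$. A short commutator computation shows $K\cap SN\neq 1$ whenever $K\neq 1$: for $k\in K$ and $v\in SN$ one has $[k,v]=(\pi(k)-1)v\in K\cap SN$ (written additively in the module $SN$), so if $K\cap SN=0$ then $\pi(k)$ acts trivially on $SN$, forcing $\pi(k)=1$ by faithfulness and hence $K\leq SN$. Thus $K\cap SN\in\{\langle\Delta\rangle\ (n\text{ even}),\ SN\}$. The same relation handles the permutation part: if $\bar K\neq 1$ then $\bar K$ contains a $3$-cycle (or, only when $n=4$, a double transposition), and in either case $(\sigma-1)SN$ is a submodule strictly larger than $\langle\Delta\rangle$, so the classification forces $K\cap SN=SN$. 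The analysis therefore divides cleanly into: (a) $\bar K=1$, where $K=\langle\Delta\rangle$ or $K=SN$; and (b) $\bar K\neq 1$, where $SN\subseteq K$ and $K$ also contains an element projecting to a nontrivial permutation.

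The engine connecting $SW_n$ to the infinite group is the pair of relations $[\Delta,R_{ij}]=K_{ij}$ and $[e_ie_j,R_{ij}]=K_{ij}$, where $R_{ij}$ is the Nielsen transvection $a_i\mapsto a_ia_j$ and $K_{ij}\colon a_i\mapsto a_ja_ia_j^{-1}$ is a Magnus generator of $IA_n=\ker(\mathrm{SAut}(F_n)\to\mathrm{SL}_n(\mathbb{Z}))$. These, together with $K_{ijk}=[K_{ij},K_{ik}]$, show that both $\langle\langle\Delta\rangle\rangle$ and $\langle\langle SN\rangle\rangle$ contain all of $IA_n$. I would then compute the images of these normal closures in $\mathrm{SL}_n(\mathbb{Z})$ using the normal subgroup theorem and the congruence subgroup property for $n\geq 3$: the image of $\langle\langle SN\rangle\rangle$ is the full level-$2$ congruence subgroup (a signed diagonal matrix is trivial in $\mathrm{SL}_n(\mathbb{Z}/m)$ exactly when $m\mid 2$), while the image of $\langle\langle\Delta\rangle\rangle$ is the central subgroup $\{\pm I\}$. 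This yields $\langle\langle\Delta\rangle\rangle=\ker(\mathrm{SAut}(F_n)\to\mathrm{PSL}_n(\mathbb{Z}))$ and $\langle\langle SN\rangle\rangle=\ker(\mathrm{SAut}(F_n)\to\mathrm{SL}_n(\mathbb{Z}/2))$, giving conclusions (1) and (2); simplicity of $\mathrm{SL}_n(\mathbb{Z}/2)$ for $n\geq 3$ upgrades (2) to the stated isomorphism of the image. In regime (b), $SN\subseteq K$ already supplies the level-$2$ subgroup, and the nontrivial permutation forces the $\mathrm{SL}_n(\mathbb{Z})$-image of $\langle\langle K\rangle\rangle$ to be everything, so $\langle\langle K\rangle\rangle=\mathrm{SAut}(F_n)$ and $f$ is trivial, which is conclusion (3).

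The hard part will be the normal-closure identifications of the third paragraph. The purely finite-group analysis of the first two paragraphs is elementary, but deducing the precise quotients $\mathrm{PSL}_n(\mathbb{Z})$ and $\mathrm{SL}_n(\mathbb{Z}/2)$ requires first feeding in the explicit Nielsen relations to capture all of $IA_n$ inside the normal closures, and then invoking the deep arithmetic input (the Margulis normal subgroup theorem together with the Bass--Milnor--Serre congruence subgroup property) to control the $\mathrm{SL}_n(\mathbb{Z})$-image of a normal closure of finite-order elements. The low-rank cases $n=3,4$, in particular the $V_4$ possibility for $\bar K$ and the exceptional behaviour of $\mathrm{SL}_3(\mathbb{Z}/2)$, would have to be checked separately by hand.
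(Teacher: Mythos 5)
First, a framing remark: the paper itself contains no proof of this lemma --- it is imported verbatim, with the sentence ``The following result is Proposition 3.1 of \cite{bv}'' --- so your proposal can only be judged against Bridson--Vogtmann's argument. Your architecture is essentially theirs: treat $K=\ker(f|_{SW_n})$ as a normal subgroup of the finite group $SW_n$ via the extension $1\to SN\to SW_n\to S_n\to 1$, use Nielsen/Magnus relations such as $[\Delta,R_{ij}]=K_{ij}$ and $[e_ie_j,R_{ij}]=K_{ij}$ to show the relevant normal closures in $\mathrm{SAut}(F_n)$ contain $IA_n$, and then use the arithmetic of normal subgroups of $\mathrm{SL}_n(\mathbb{Z})$ ($n\geq 3$) to identify the quotients $\mathrm{PSL}_n(\mathbb{Z})$ and $\mathrm{SL}_n(\mathbb{Z}/2)$. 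The module-theoretic core is correct: the $\mathbb{F}_2[S_n]$-submodule lattice of the augmentation module is exactly as you state (any nonzero submodule other than $\langle\Delta\rangle$ contains some $e_i+e_j$ and hence everything), and the commutator identity $[k,v]=(\pi(k)-1)v$ does force $K\cap SN\neq 1$.

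There is, however, one genuinely false step. You assert that if $\bar K\neq 1$ then, whether $\bar K$ contains a $3$-cycle or (only for $n=4$) a double transposition, ``$(\sigma-1)SN$ is a submodule strictly larger than $\langle\Delta\rangle$.'' For a double transposition this is wrong: with $\sigma=(12)(34)$ acting on the augmentation module of $\mathbb{F}_2^4$ one computes $(\sigma-1)(e_1+e_2)=0$, $(\sigma-1)(e_2+e_3)=\Delta$, $(\sigma-1)(e_3+e_4)=0$, so $(\sigma-1)SN=\langle\Delta\rangle$ exactly. Consequently the deduction ``$\bar K\neq 1\Rightarrow SN\subseteq K$'' fails precisely when $n=4$ and $\bar K=V_4$, leaving the configuration $K\cap SN=\langle\Delta\rangle$, $\bar K=V_4$ untouched; and your regime (b) endgame explicitly relies on $SN\subseteq K$ (``$SN\subseteq K$ already supplies the level-$2$ subgroup''), so the closing remark that $n=3,4$ ``would have to be checked separately by hand'' is not a harmless hedge --- it is exactly the case your written argument cannot reach. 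The gap is repairable with tools you already invoke: in that surviving case $\Delta\in K$ gives $IA_4\subseteq\langle\langle K\rangle\rangle$, and any $k\in K$ with $\pi(k)=(12)(34)$ maps in $\mathrm{SL}_4(\mathbb{Z})$ to a signed permutation matrix with off-diagonal entries $\pm 1$; such a matrix has level ideal $(1)$, so the Bass--Milnor--Serre description of normal subgroups of $\mathrm{SL}_n(\mathbb{Z})$, $n\geq 3$ (a normal subgroup of level $\mathfrak{q}$ contains $E_n(\mathbb{Z},\mathfrak{q})$), makes its normal closure all of $\mathrm{SL}_4(\mathbb{Z})$, whence $\langle\langle K\rangle\rangle=\mathrm{SAut}(F_4)$ and $f$ is trivial, as conclusion (3) demands. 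A smaller point of the same flavour: your parenthetical justification that the image of $\langle\langle SN\rangle\rangle$ is the full level-$2$ congruence subgroup only shows containment in $\Gamma(2)$; equality needs the commutator computation giving $E_n(\mathbb{Z},2)$ inside the closure together with $E_n(\mathbb{Z},2)=\Gamma(2)$ (triviality of $SK_1(\mathbb{Z},2)$ for $n\geq 3$), which the Bass--Milnor--Serre theory you cite does supply. With these repairs your proof goes through.
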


\begin{proof}[Proof of Theorem \protect\ref{th2}]
Let $G$ be the group of homologically trivial homeomorphisms of $M$ and $f:%
\mathrm{SAut}(F_{n})\rightarrow G$ a group homomorphism. Let $SW_{3}<\mathrm{%
SAut}(F_{3})$ (viewed as a subgroup of $\mathrm{SAut}(F_{n})$ fixing all $%
a_{i},$ $i>3$) be the finite group defined as above.

When $b_{2}(M)\geq 3,$ any finite subgroup of $G$ is abelian by Theorem \ref%
{main}. This implies that the kernel $K$ of $f|_{SW_{n}}$ is non-trivial. If 
$K=SN,$ the image of $f|_{\mathrm{SAut}(F_{3})}$ is isomorphic to the
non-abelian group $\mathrm{SL}(3,\mathbb{Z}/2)$ by Lemma \ref{lemlast} and
this is impossible by Theorem \ref{main4}. Therefore, the action of $\mathrm{%
SAut}(F_{3}),$ and thus that $\mathrm{SAut}(F_{n}),$ is trivial.

When $b_{2}(M)=2,$ we construct a subgroup $H=(\mathbb{Z}/3)^{2}\rtimes 
\mathbb{Z}/2<$ $\mathrm{SAut}(F_{4})$ (viewed as a subgroup of $\mathrm{SAut}%
(F_{n})$ fixing all $a_{i},$ $i>4$), where the generator $b$ of $\mathbb{Z}%
/2 $ permutes the two generators of $(\mathbb{Z}/3)^{2}.$ When this is done,
Lemma \ref{mac} implies that the singular set $\Sigma $ of $(\mathbb{Z}%
/3)^{2}$ is a chain of 2-spheres arranged in a closed loop. Moreover, each
sphere represents a primitive class in $H_{2}(M;\mathbb{Z})$ and together
these classes generate $H_{2}(M;\mathbb{Z}).$ However, the element $b$
permutes spheres in the singular set $\Sigma ,$ which is impossible since
the group action of $b$ is homologically trivial. This shows that the action
of $H$ is not effective and the theorem is proved by Lemma \ref{lemlast} and
Theorem \ref{main}. Actually, for $i=1,2$, we define $R_{i}:F_{n}\rightarrow
F_{n}$ as 
\begin{equation*}
a_{2i-1}\longmapsto a_{2i}^{-1},a_{2i}\longmapsto
a_{2i}^{-1}a_{2i-1},a_{j}\longmapsto a_{j}(j\neq 2i,2i-1).
\end{equation*}%
The subgroup generated by $R_{1},R_{2}$ is isomorphic to $(\mathbb{Z}/3)^{2}$
(cf. Bridson-Vogtmann \cite{bv}, Lemma 3.2)$.$ The generator $R_{1}$ is
conjugate to $R_{2}$ by the involution $(13)(24),$ giving the generator of $%
\mathbb{Z}/2.$

When $b_{2}(M)=1,$ we consider the subgroup $N\cap \mathrm{SAut}(F_{4}),$
which is isomorphic to $(\mathbb{Z}/2)^{3}$ whose generators will be denoted
by $a,b,c.$ Suppose that $(\mathbb{Z}/2)^{3}$ acts effectively. Lemma \ref%
{mac} implies that the singular set $\Sigma $ of $a,b$ is a loop of $%
b_{2}(M)+2=3$ 2-spheres, intersecting at three global fixed points $x,y,z\in 
\mathrm{Fix}(a,b).$ But $c$ acts invariantly on each sphere in $\Sigma .$
The Borel formula \ref{borel} gives that the action of $c$ is non-trivial.
Therefore, the fixed point set of any index-2 subgroup of $(\mathbb{Z}%
/2)^{3} $ is discrete, which is impossible by Borel's formula again. This
implies that some non-trivial element $g\in (\mathbb{Z}/2)^{3}$ acts
trivially. Lemma \ref{lemlast} implies that $\func{Im}f$ is either trivial
or contains a non-abelian subgroup. Considering Theorem \ref{main}, the
image $\func{Im}f $ has to be trivial.
\end{proof}

\bigskip

\bigskip

\noindent \textbf{Acknowledgements}

The author is grateful to the referee for detailed comments on a previous
version of this article. This work is supported by NSFC (No. 11971389).

\bigskip 

NYU Shanghai, 1555 Century Avenue, Shanghai, 200122, China.

NYU-ECNU Institute of Mathematical Sciences at NYU Shanghai, 3663 Zhongshan
Road North, Shanghai, 200062, China

E-mail: sy55@nyu.edu


\begin{thebibliography}{99}
\bibitem{Bo} A. Borel, \textit{Seminar on transformation groups}. Annals of
Mathematics Studies, No. 46, Princeton University Press, Princeton, N.J.
1960.

\bibitem{br} G.E. Bredon, \textit{Sheaf Theory, second edition}. Graduate
Texts in Mathematics, 170. Springer-Verlag, New York, 1997. %
\setlength\itemsep{-2pt}

\bibitem{bv} M. Bridson and K. Vogtmann, \textit{Actions of automorphism
groups of free groups on homology spheres and acyclic manifolds,}
Commentarii Mathematici Helvetici 86(2011), 73-90.

\bibitem{ad} A.L. Edmonds, \textit{Aspects of group actions on four-manifolds%
}, Topol. Appl. 31 (2) (1989) 109-124.

\bibitem{ad2} A.L. Edmonds, \textit{Homologically trivial group actions on
4-manifolds}, arXiv:math/9809055.

\bibitem{ad3} A. Edmonds, \textit{Construction of group actions on
four-manifolds}, Trans. Amer. Math. Soc. 299 (1987), 155-170.

\bibitem{fi} D. Fisher, \textit{Groups acting on manifolds: Around the
Zimmer program}, In Geometry, Rigidity, and Group Actions 72-157. Univ.
Chicago Press, Chicago, 2011.

\bibitem{hl} I. Hambleton, R. Lee, \textit{Finite group actions on }$P^{2}(%
\mathbb{C})$, J. Algebra 116 (1) (1988) 227--242.

\bibitem{hl2} I. Hambleton, R. Lee, \textit{Smooth group actions on definite
4-manifolds and moduli spaces}, Duke Math. J. 78 (3) (1995) 715--732.

\bibitem{xiang} W. Y. Hsiang, \textit{Cohomology Theory of Topological
Transformation Groups}, Springer, New York 1975.

\bibitem{ks} S. Kwasik, R. Schultz, \textit{Homological properties of
periodic homeomorphisms of 4 -manifolds.} Duke Math. J. 58 (1989), no. 1,
241--250

\bibitem{mc} M. P. McCooey, \textit{Symmetry groups of four manifolds},
Topology, 41(2002), 835-851

\bibitem{mc2} M. P. McCooey, \textit{Four manifolds which admit }$\mathbb{Z}%
/p\times \mathbb{Z}/p$\textit{\ actions}, Forum Math., 41(2002), 835-851

\bibitem{re} L. Redei, \textit{Das scheife Produkt in der Gruppentheorie},
Commentarii Mathematici Helvetici, 20 (1947) 225--264 (German).

\bibitem{tom} T. tom Dieck, Transformation Groups, Walter de Gruyter \& Co.,
Berlin, 1987.

\bibitem{w} D. Wilczy\'{n}ski, \textit{Group actions on the complex
projective plane}, Trans. Amer. Math. Soc. 303 (2) (1987) 707--731.

\bibitem{ya} N. Yagita, \textit{On the dimension of spheres whose product
admits a free action by a non-abelian group}, Quart. J. Math. Oxford. Second
Series. 36 (141) (1985) 117--127.

\bibitem{ye182} S. Ye, \textit{Euler characteristics and actions of
automorphism groups of free group}, Algebraic and Geometric Topology 18
(2018) 1195-1204.
\end{thebibliography}
\end{document}